\pgfplotsset{compat=1.6}
\pgfplotsset{soldot/.style={color=blue,only marks,mark=*}} 
\pgfplotsset{holdot/.style={color=blue,fill=white,only marks,mark=*}}
\theoremstyle{plain}
\newtheorem{Thm}{Theorem}
\newtheorem{Lem}[Thm]{Lemma}
\theoremstyle{definition}
\newtheorem{Def}[Thm]{Definition}
\newtheorem{Rmk}[Thm]{Remark}
\theoremstyle{remark}
\begin{document}


\title[Deformations of Bowen-Series maps]{Continuous deformation of the Bowen-Series map associated to a cocompact triangle group}

\author{Thomas A. Schmidt}
\address{Oregon State University\\Corvallis, OR 97331}
\email{toms@math.orst.edu}

 \author{Ay\c{s}e Y{\i}ltekin-Karata\c{s}}
\address{Bartin University\\Turkey}
\email{yiltekia@gmail.com}
\date{8 December 2023}


\begin{abstract} 
In 1979, for each signature for Fuchsian groups of the first kind, Bowen and Series constructed an explicit fundamental domain for one  group of the signature, and from this a function on $\mathbb S^1$ tightly associated with this group.  In general, their fundamental domain enjoys what has since been called both the `extension property' and the `even corners property'.   We determine the exact set of signatures for cocompact triangle groups for which this property can hold for any convex fundamental domain, and verify that for this restricted set, the Bowen-Series fundamental domain does have the property. 

To each Bowen-Series function in this corrected setting, we naturally associate four continuous deformation families of circle functions.  We show that each of these functions
is aperiodic if and only if it is surjective;
 and, is finite Markov if and only if its natural parameter is a hyperbolic fixed point of the triangle group at hand.
\end{abstract}

\maketitle

\tableofcontents

\section{Introduction}    
In 1979, for each signature for Fuchsian groups of the first kind, Bowen-Series \cite{bs}  constructed an explicit fundamental domain for one  group of the signature, and from this, a so-called boundary map, a function on $\mathbb S^1$, tightly associated with this group.    In the cocompact setting, they showed that their function is uniquely ergodic with respect to a probability measure equivalent to Lebesgue measure.     These  functions have been revisited many times, most notably in the surface group setting by, among others,  Adler-Flatto \cite{af}, Morita \cite{Morita},  Pit \cite{pit}, Los  and co-authors  \cite{losBSLmaps, losEtAlVolEntropy}, and S.~Katok and co-authors \cite{KatokUgarcoviciBdryMaps17, adamsKatokRevisit2019,adamsKatokUgarcovici2022Flex}.    Many interval maps have the flavor of boundary maps, in particular regular continued fractions and the underlying interval map are well-known to be related to the Fuchsian group $\text{PSL}_2(\mathbb Z)$.   The well-studied $\alpha$-continued fractions of Nakada \cite{Nakada} give a one-parameter deformation of the regular continued fraction map.  In an analogous manner, there is a geometrically natural way to form one-parameter deformations of the Bowen-Series maps, as \cite{KatokUgarcoviciBdryMaps17, losBSLmaps} do in the surface group setting.

One naturally expects that many of the results for the surface groups will succeed in the setting of the cocompact triangle groups.    Some aspects must be simpler, since any two cocompact triangle Fuchsian groups of the same signature are conjugate in $\text{PSL}_2(\mathbb R)$.  (In particular, the use by Bowen-Series of quasi-conformal maps, to extend their results so as to apply to all Fuchsian groups of a given signature, is unnecessary in this setting.)    We were surprised to find that a central feature of the Bowen-Series construction fails for certain cocompact triangle signatures, see Theorem~\ref{t:whichTrianglesWork}.   We restrict to the signatures where this `extension property' --- also known in the literature as the {\em even corners property}, see say \cite{SeriesGeom, BufetovSeries} --- does hold, and consider  one-parameter deformations of the Bowen-Series function for each.   Another surprise arose:  Functions so created can fail to be ergodic with respect to a probability measure equivalent to Lebesgue measure.  Indeed, one finds functions that fail to be surjective!  In Theorem~\ref{t:aperiodic} we identify exactly when our functions are surjective.

Recall that aperiodicity of a function with respect to a partition is a type of transitivity property.  As  Adler-Flatto \cite{af} prove (see their `Folklore Theorem'), when a sufficiently smooth piecewise monotone eventually expanding function is both aperiodic and Markov with respect to a finite partition, it has an ergodic probability measure equivalent to Lebesgue measure.   We show, see again Theorem~\ref{t:aperiodic},  that any surjective member of the deformation families which is Markov is also aperiodic.    The (finite) Markov property holds exactly for those parameter values which are hyperbolic fixed points of the Fuchsian group at hand, see Theorem~\ref{t:markoffIsHyper}.

\subsection{Main Results} 
 
We give a  correction to the initial sentence of [\,\cite{bs}, Section 3].    Let  ``$\mathcal F$ is a fundamental domain  for the signature" mean that there is some Fuchsian group of the given signature which has $\mathcal F$ as a fundamental domain.  To simplify notation throughout, we write $(m_1,m_2,m_3)$ to represent the standard  signature $(0; m_1, m_2, m_3)$ when speaking of  Fuchsian triangle groups. The definition of the extension property is given in Subsection~\ref{ss:theNet}.%

 \begin{Thm}\label{t:whichTrianglesWork}   Suppose that $(m_1, m_2, m_3)$ is the signature of a 
cocompact hyperbolic Fuchsian triangle group.  If more than one $m_i$ is odd, then  no convex fundamental domain  for the signature  has the extension property.
Otherwise,  the Bowen-Series fundamental domain for this signature does have the extension property. 
\end{Thm}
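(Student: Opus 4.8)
The plan is to reduce the extension property to a purely local condition at the vertices of the tessellation $\mathcal N = \bigcup_{g\in\Gamma} g\,\partial\mathcal F$ and then to count. Write $\Gamma$ for a triangle group of the given signature and $\mathcal F$ for a convex fundamental domain. Since each complete geodesic carrying a side of $\mathcal F$ must be a union of edges of $\mathcal N$, the property holds if and only if, at every vertex $v$ of $\mathcal N$, the set of edge-rays of $\mathcal N$ issuing from $v$ is invariant under the rotation $\iota_v$ by angle $\pi$ about $v$, so that the straight continuation of each edge is again an edge. I would first record that every elliptic fixed point of $\Gamma$ of order $\ge 3$ must occur as an honest corner of $\mathcal F$: it can lie neither in the interior of a tile nor in the interior of an edge, since its stabilizer acts freely on the tiles meeting it. Around such a corner $v$ the order-$m$ rotation $R_v$ acts freely on the edge-rays, and the antipodal condition asks that the edge-ray set be invariant under $\langle R_v, \iota_v\rangle$, a cyclic group of rotations of order $m$ when $m$ is even and of order $2m$ when $m$ is odd.

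For the negative direction I would suppose $\mathcal F$ is convex with the extension property and count corners. Let $c_i$ denote the number of corners of $\mathcal F$ lying over the $i$-th cone point; the vertex-cycle (covering) condition gives $\sum_{v\mid i}\theta_v = 2\pi/m_i$, while accidental (non-elliptic) corners group into $k$ cycles of angle sum $2\pi$ each, with at least three corners per cycle by convexity. Applying Gauss--Bonnet to $\mathcal F$ with its $n$ corners and equating its area to the covolume $2\pi(1-\sum 1/m_i)$ of $\Gamma$ collapses to $n = 4 + 2k$; since $n = c_1+c_2+c_3 + a$ with $a\ge 3k$ accidental corners, this forces $c_1+c_2+c_3 \le 4 - k$, and hence \emph{at most one} of the $c_i$ can exceed $1$. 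Now if $m_i$ is odd and $c_i = 1$, the unique corner over the $i$-th cone point has angle $2\pi/m_i$, so exactly $m_i$ edge-rays, equally spaced by $2\pi/m_i$, issue from the corresponding elliptic point; these are $\iota_v$-invariant only if $m_i$ is even, so the extension property fails there. Since at most one cone point has $c_i\ge 2$, two odd $m_i$ necessarily produce an odd $m_i$ with $c_i=1$, completing this direction.

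For the positive direction I would exhibit the Bowen--Series domain as the quadrilateral obtained by doubling the defining triangle across one side, choosing that side so that the doubled vertex sits over the unique possibly-odd cone point (the remaining two cone points being even). Here all four corners are elliptic, with the two doubled corners of angle $\pi/m$ over the order-$m$ cone point and the other two of full angle $2\pi/m_j$ for the two even indices $j$. At a doubled corner the $2m$ edge-rays are equally spaced by $\pi/m$ and are automatically $\iota_v$-invariant for every parity of $m$; at a full corner of even order the $m_j$ equally spaced rays are likewise $\iota_v$-invariant. Since the tessellation by this quadrilateral has no accidental vertices, every vertex of $\mathcal N$ is $\Gamma$-equivalent to one of these three, so the antipodal condition holds at every vertex; propagating along each geodesic then shows that the line carrying each side of the quadrilateral is a union of edges of $\mathcal N$, which is the extension property.

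The main obstacle is the local bookkeeping at the doubled corner: one must check that the $2m$ tiles meeting there really do present equal angles $\pi/m$, so that the edge-rays are evenly spaced and antipodally symmetric, and this rests on identifying the side-pairing of the quadrilateral and the $\Gamma$-orbit of its corners. By contrast the negative direction is comparatively rigid once the identity $n = 4+2k$ is in hand; the only care needed there is to confirm that each odd cone point (of order $\ge 3$) is genuinely a corner and that the corners over it form a single elliptic cycle, so that the angle-sum $2\pi/m_i$ and the count $c_i$ are correctly matched.
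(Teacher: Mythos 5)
Your argument is correct in outline, but it takes a genuinely different route from the paper in both directions. The paper's negative direction splits by Beardon's dichotomy into the hexagon case (where Pit's observation about an accidental cycle of angle sum $2\pi$ with each angle less than $\pi$ forces a side-geodesic into a tile interior) and the quadrilateral case (where an explicit rotation $T^{(m+1)/2}$ about an odd-order corner of full angle $2\pi/m$ carries the extension of a side onto the angle bisector, inside $\mathcal F$); your uniform Gauss--Bonnet and cycle count ($n=4+2k$, hence $c_1+c_2+c_3\le 4-k$, hence some odd cone point has a single full-angle corner) reaches the same configuration without the case split, and your antipodal-ray criterion at that corner is essentially the paper's rotation argument in disguise. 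For the positive direction the paper shows the sides of the Bowen--Series quadrilateral project to closed geodesics and invokes the Adler--Flatto observation (Lemma~\ref{l:projToGeos}), which for odd $m_3$ requires exhibiting an explicit hyperbolic word whose axis contains a union of opposite sides; you instead verify the local $\pi$-rotation symmetry of the edge-ray set at each of the three vertex classes of the tessellation and propagate. Your route is more uniform and avoids producing explicit group elements; the paper's route yields the extra geometric fact that the sides lie on axes of explicit hyperbolics, which is of independent interest.

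Two small wrinkles you should patch. First, your identity $n=4+2k$ presumes every cone point contributes a corner; when some $m_i=2$ the order-two elliptic points can lie in the interiors of edges, so $c_i=0$ and the correct count is $n=4+2k-\#\{i : c_i=0\}$ --- fortunately this only strengthens the inequality $\sum_{c_i\ge 1} c_i\le 4-k$, so your conclusion that at most one $c_i$ exceeds $1$ survives. Second, ``at least three corners per accidental cycle'' uses strict convexity at accidental corners (no angle-$\pi$ vertices), which matches the paper's standing non-degeneracy convention but should be said. Finally, note that you only need the easy half of your local criterion in each direction: for failure, that a non-antipodal edge-ray at a corner of $\mathcal F$ forces $g(s)$ into a tile interior; for success, that antipodal symmetry at every net vertex lets you propagate $g(s)$ edge by edge through the net.
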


 We define $\mathscr E$ be the set of those signatures $(m_1, m_2, m_3)$ corresponding to  a cocompact hyperbolic Fuchsian triangle group such that the Bowen-Series fundamental domain is a non-degenerate quadrilateral with the extension property,  and insist on a particular ordering of the $m_i$; see Definition~\ref{d:WithExtensionProperty}.   The Bowen-Series function is defined in terms of the side pairing elements of the Bowen-Series fundamental domain,    $T_i, 1\le i \le 4$; see Subsection~\ref{ss:bsFun}.   For $(m_1, m_2, m_3) \in \mathscr E$,  the function $f$ is eventually expanding since each $T_i$ is applied on a subset of its isometric circle.
Set $(n_1, n_2, n_3, n_4) = (m_3, m_2/2, m_3, m_1/2)$.   These latter values control aspects related to the partition $\mathcal P$ of $\mathbb S^1$ associated to $f$, see Subsection~\ref{ss:theFun}.    

Recall that a function $g$ is called {\em aperiodic} with respect to a partition of its domain of definition if there is a finite compositional power of the function that maps the closure of each partition element onto the whole domain.   Note that we use the term {\em Markov} following the convention that it means what is sometimes called {\em finite Markov}.   The main property this entails is:   There is a finite partition $\{I_1, \dots, I_k\}$ of the domain such that if the image of a partition element meets any partition element, then the image contains that partition element.  That is,  $g(I_i) \cap I_j \neq \emptyset$ implies  $g(I_i) \supset I_j$.    Bowen-Series showed that their eventually expansive function $f$ is both aperiodic and Markov, and therefore has an invariant probability measure equivalent with Lebesgue measure.   

 There are four corresponding {\em overlap intervals} $\mathscr O_i$ where appropriately replacing $f(x) = T_i x$ by $x \mapsto T_{i-1}x$  for all $x$ in some subinterval determined by a parameter $\alpha$ leads to an eventually expansive function $f_{\alpha}$, see Subsection~\ref{ss:defFunPart}.

 \begin{Thm}\label{t:aperiodic}    Fix $(m_1,m_2,m_3) \in \mathscr E$ and let $f$ denote its Bowen-Series function.    Then for $\alpha \in \mathscr O_i$ the function $f_{\alpha}$ of \eqref{e:ThatsOurFunction} is surjective   if and only  if any of the following conditions holds:  
 \begin{itemize}
\item[(i)] $n_i>2$,
\item[(ii)] $n_i=2$ and $n_{i+2}>2$, 
\item[(iii)]   $\alpha$ belongs to the closure of the set of points $x \in \mathscr O_i$ such that $f^{n_i}(x) = f^{n_i-1}(T_{i-1} x)$
 \end{itemize}
 
 Furthermore if $f_{\alpha}$ is Markov with respect to  the partition $\mathcal P_{\alpha}$ which is defined in Subsection~\ref{ss:defFunPart},   then  $f_{\alpha}$ is aperiodic with respect to $\mathcal P_{\alpha}$ if and only if  $f_{\alpha}$ is surjective.
\end{Thm}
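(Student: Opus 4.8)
The plan is to prove the surjectivity characterization by analyzing the image $f_\alpha(\mathbb S^1)$ directly as a finite union of arcs, locating the single place where a gap can open, and then translating emptiness of that gap into the stated conditions. Since each branch of $f_\alpha$ is the restriction of one of the M\"obius maps $T_k$, we have $f_\alpha(\mathbb S^1) = \bigcup_k T_k(P_k^\alpha)$, where the $P_k^\alpha$ are the intervals of $\mathcal P_\alpha$. For the undeformed $f$ these image arcs tile $\mathbb S^1$ with matching endpoints, by the Bowen--Series Markov structure recalled in the introduction, so $f$ is surjective. The deformation on $\mathscr O_i$ transfers a subinterval $J_\alpha$ (determined by $\alpha$) from the $T_i$-branch to the $T_{i-1}$-branch: it shrinks $T_i(P_i)$ by the arc $T_i(J_\alpha)$ and enlarges $T_{i-1}(P_{i-1})$ by $T_{i-1}(J_\alpha)$. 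First I would verify that the only candidate for an uncovered arc is the one formerly contributed by $T_i(J_\alpha)$; denote its closure $G_\alpha$. Its two endpoints are the images of the moving partition point under the two competing branches, and I would pin them down using the side-pairing relations among the $T_k$.

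The key step is to rewrite the endpoints of $G_\alpha$ as orbit points of $f$. Here I would use the vertex-cycle (elliptic) relation satisfied by the $T_k$ at the fixed point governing $\mathscr O_i$ -- the vertex whose fan comprises $n_i$ partition intervals. Going once around that fan realizes the elliptic relation, so the endpoint of $G_\alpha$ contributed from the $T_i$-side equals $f^{n_i}(x)$ while the endpoint contributed from the $T_{i-1}$-side equals $f^{n_i-1}(T_{i-1}x)$, where $x$ is the endpoint of $J_\alpha$ determined by $\alpha$. Consequently $G_\alpha$ degenerates to a point precisely when these two orbit points coincide, which identifies condition (iii) with the exact gap-closing case; the passage to the closure in (iii) is exactly what captures the limiting configuration in which the length of $G_\alpha$ shrinks to zero.

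Next I would carry out the fan-size dichotomy that produces (i) and (ii). When $n_i > 2$ the fan contains an intermediate partition point whose image lies in the interior of $G_\alpha$ and is the endpoint of a neighboring branch arc covering $G_\alpha$ for every $\alpha \in \mathscr O_i$; this yields (i). When $n_i = 2$ the fan is minimal and supplies no such interior point, so any covering of $G_\alpha$ must come from the branch at the opposite quadrilateral vertex, of index $i+2$. That branch reaches $G_\alpha$ exactly when its own fan is non-minimal, $n_{i+2} > 2$, giving (ii); if instead $n_{i+2} = 2$, neither fan covers $G_\alpha$ and surjectivity holds only in the degenerate case, namely (iii). I would record the consistency check that for $i \in \{1,3\}$ one has $n_i = n_{i+2} = m_3$, so (ii) is vacuous there and the dichotomy reduces to (i) versus (iii), whereas for $i \in \{2,4\}$ the opposite indices are the genuinely distinct $m_1/2, m_2/2$. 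Assembling the three cases gives the claimed equivalence.

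For the \emph{Furthermore} clause, assume $f_\alpha$ is Markov with respect to $\mathcal P_\alpha$. The implication aperiodic $\Rightarrow$ surjective is immediate, since $f_\alpha^N(\overline P) = \mathbb S^1$ forces $f_\alpha(\mathbb S^1) = \mathbb S^1$. Conversely, I would pass to the finite $0$--$1$ transition matrix $M$ on $\mathcal P_\alpha$ (well defined because images of partition elements are unions of elements), where surjectivity says every column of $M$ is nonzero. Eventual expansion makes forward images of elements grow until some iterate covers a full element that in turn maps over all of $\mathbb S^1$; together with the absence of empty columns this forces $M$ to be primitive, which is precisely aperiodicity. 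The hard part will be the key step of the second paragraph: rigorously matching the single-application gap $G_\alpha$ to the iterated-orbit formulation, that is, identifying its endpoints with $f^{n_i}(x)$ and $f^{n_i-1}(T_{i-1}x)$ via the elliptic relation while keeping the circular orientation and the neighboring-branch coverage bookkeeping correct.
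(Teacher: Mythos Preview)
Your surjectivity argument breaks at exactly the point you flag as ``hard''. The arc $G_\alpha = \overline{T_i(\mathscr D)}$ has endpoints $T_i(a_i^{n_i})$ and $T_i(\alpha)$; these are \emph{single} applications of $T_i$, and for general $n_i$ neither equals $f^{n_i}(\alpha)$ nor $f^{n_i-1}(T_{i-1}\alpha)$. The elliptic relation at the vertex does not collapse the iterated orbit points onto the endpoints of $G_\alpha$ in the way you describe. More importantly, condition~(iii) does \emph{not} correspond to $G_\alpha$ degenerating to a point: for $\alpha$ in the interior of $M_1$ the arc $T_i(\mathscr D)$ has positive length, yet $f_\alpha$ is surjective because this arc is entirely \emph{covered by another branch}. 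What actually happens in the paper is that one identifies a specific union $J = \mathcal A_{i+1}$ of $\mathcal P$-elements disjoint from $\mathscr O_i$ and shows $f(\mathscr D) \subseteq f(\mathcal A_{i+1})$ whenever $n_i>2$; when $n_i=2$ one has $f(\mathscr O_i) = L_1(v_{\rho(i)}) = \mathcal A_{\rho(i)} \sqcup \mathscr O_{\rho(i)+1}$, the first piece is always in $f(\mathcal A_{i+1})$, and the second is in $f(\mathcal A_{i+1})$ iff $n_{i+2}>2$. Condition~(iii) is then precisely the statement that $f(\mathscr D)$ does not protrude into $\mathscr O_{\rho(i)+1}$, which via the homeomorphism $f^{n_i-1}\colon \mathscr O_i \to L_1(v_{\theta(i)})$ translates to $\alpha \in \overline{M_1}$. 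So the link between the single-application gap and the $n_i$-fold iterate in~(iii) is mediated by the position of $T_i(\alpha)$ relative to a fixed partition point, not by an endpoint identification.

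For the aperiodicity direction your argument is circular. The sentence ``eventual expansion makes forward images grow until some iterate covers a full element that in turn maps over all of $\mathbb S^1$'' presupposes the existence of a $\mathcal P_\alpha$-element whose forward $f_\alpha$-orbit is all of $\mathbb S^1$, which is essentially transitivity. Surjectivity (no empty columns of $M$) together with expansion does not by itself rule out a reducible or imprimitive transition matrix. The paper's proof supplies a genuine mechanism here: it tracks a chain $\mathcal A_{j_0} \to \mathcal A_{j_1} \to \cdots$ through the non-overlap intervals (governed by an index automaton) to show that the $f_\alpha$-orbit of $\mathscr D$ eventually \emph{contains} $f(\mathscr D)$. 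Once that holds, every passage of an orbit through $\mathscr D$ can be ``repaired'' to rejoin the corresponding $f$-orbit, and aperiodicity is inherited from the already-established aperiodicity of $f$ with respect to $\mathcal P$. Your matrix argument would need a replacement for this step; without it the implication surjective $\Rightarrow$ aperiodic is unproved.
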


 \begin{Thm}\label{t:markoffIsHyper}   Let $\Gamma$ be the group from which $f$ is constructed.   The map $f_{\alpha}$ is Markov with respect to $\mathcal P_{\alpha}$ if and only if $\alpha \in \mathscr O$ is a hyperbolic fixed point of  $\Gamma$. 
\end{Thm}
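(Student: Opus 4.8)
The plan is to reduce the finite Markov property of $f_\alpha$ to the finiteness of the forward $f_\alpha$-orbit of the single cut point $\alpha$, and then to identify that finiteness with $\alpha$ being a hyperbolic fixed point. First I would observe that the endpoints of $\mathcal P_\alpha$ consist of the structural points inherited from the undeformed partition of $f$, together with $\alpha$ (and any further endpoints created by the deformation, which are $f_\alpha$-images of $\alpha$). Since Bowen-Series established that $f$ itself is Markov, the structural endpoints already have finite, eventually periodic, forward orbits, so only the orbit of $\alpha$ is in question. A finite Markov partition forces $f_\alpha$ to carry partition endpoints to partition endpoints, making the endpoint set forward invariant; hence $f_\alpha$ is Markov with respect to a finite $\mathcal P_\alpha$ if and only if the forward orbit $\{f_\alpha^{\,n}(\alpha)\}_{n\ge 0}$ is finite, equivalently $\alpha$ is pre-periodic for $f_\alpha$.

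For the forward implication I would suppose $\alpha$ pre-periodic, say $f_\alpha^{\,k+p}(\alpha)=f_\alpha^{\,k}(\alpha)=:y$ with period $p\ge 1$. On the cylinder through $y$ the iterate $f_\alpha^{\,p}$ coincides with a single element $w\in\Gamma$, namely the ordered product of the branch transformations $T_i$ met along the period, so $w(y)=y$. Because $f_\alpha$ is eventually expanding (each $T_i$ acts on a subset of its isometric circle), periodic points are repelling and $|(f_\alpha^{\,p})'(y)|>1$; a M\"obius map with derivative exceeding one at a fixed point cannot be the identity, so $w\neq\mathrm{id}$. Since $\Gamma$ is cocompact it has no parabolics, and elliptics fix no point of $\mathbb S^1$; therefore the nontrivial $w$ fixing $y\in\mathbb S^1$ is hyperbolic and $y$ is one of its fixed points. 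Writing $y=g(\alpha)$ with $g\in\Gamma$ the branch product carrying $\alpha$ to $y$, the conjugate $g^{-1}wg\in\Gamma$ is hyperbolic and fixes $\alpha$, so $\alpha$ is a hyperbolic fixed point of $\Gamma$.

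For the converse I would assume $h(\alpha)=\alpha$ for some hyperbolic $h\in\Gamma$ and show the $f_\alpha$-expansion of $\alpha$ is eventually periodic. Every orbit point $f_\alpha^{\,n}(\alpha)$ is a $\Gamma$-translate of $\alpha$, hence itself a hyperbolic fixed point, and the natural route is the Lagrange--Galois mechanism: the coding of the forward ray to $\alpha$ stabilizes to the coding of the axis of $h$. The repelling fixed point $\alpha^-$ of $h$ and $\alpha$ span the oriented axis of $h$, whose image in the compact quotient $\mathbb H/\Gamma$ is a closed geodesic; cocompactness makes this geodesic compact, so its symbolic itinerary under the section defining $f_\alpha$ is periodic, forcing the tail of the expansion of $\alpha$ to be periodic. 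I expect this converse to be the main obstacle, since it requires that the deformed map $f_\alpha$ still codes geodesic endpoints compatibly with $\Gamma$, so that the closed-geodesic periodicity transfers to $f_\alpha$; the alternative, purely arithmetic route is to track the attracting/repelling pair $\big(f_\alpha^{\,n}(\alpha),\,(g_n)^{-1}(\alpha^-)\big)$ and to show, using the invariance of $h$ and the expansion of $f_\alpha$, that these pairs lie in a compact set and assume only finitely many values, whence periodicity follows by pigeonhole. Either way, finiteness of the orbit yields a finite forward-invariant endpoint set and thus a finite Markov $\mathcal P_\alpha$, completing the equivalence.
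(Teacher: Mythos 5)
Your forward direction is sound and is essentially the paper's argument (Lemma~\ref{l:finiteImpliesHyperbolic} together with Lemma~\ref{l:strongExpanGivesPerIffHyper}): finiteness of $W_{\alpha}$ forces the orbit of $\alpha$ to be pre-periodic, expansivity rules out the identity word, and cocompactness rules out parabolics, so $\alpha$ is a hyperbolic fixed point. The reduction of the Markov property to finiteness of the orbits of $\alpha$ and $T_{i-1}\alpha$ is also as in the paper.

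The converse, however, has a genuine gap exactly where you suspected it. Your main route assumes that the symbolic itinerary of $\alpha$ under $f_{\alpha}$ tracks the coding of the axis of $h$, i.e.\ that $f_{\alpha}$ ``still codes geodesic endpoints compatibly with $\Gamma$.'' That is essentially orbit equivalence of $f_{\alpha}$ with $\Gamma$, and the paper states explicitly that this property fails in general for the deformed maps (indeed, Theorem~\ref{t:aperiodic} produces parameters for which $f_{\alpha}$ is not even surjective, so it cannot be orbit equivalent to the group action). The easy argument you sketch is precisely Lemma~\ref{l:orbitEquivGivesHyperMark}, which the authors include only to explain why it does not suffice. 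Your alternative ``arithmetic'' route via compactness and pigeonhole is not developed enough to see why the pairs you track take only finitely many values; the expansion of $f_{\alpha}$ alone does not give this. The paper's actual resolution is Theorem~\ref{t:infiniteCollisions}: for any point with infinite $f_{\alpha}$-orbit, the $f_{\alpha}$-orbit collides infinitely often with the $f$-orbit up to at most two extra applications of $f$ (this rests on the delicate matching analysis of Lemmas~\ref{l:giantStepsNotThereEvenM3}--\ref{l:mOddMatching}, split by the parity of $m_3$). One then invokes the strong orbit equivalence of the \emph{undeformed} $f$ with $\Gamma$ to conclude that a hyperbolic fixed point has finite $f$-orbit, and the collision theorem plus finite-to-one-ness of $f$ transfers that finiteness to the $f_{\alpha}$-orbit. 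Without some substitute for this collision mechanism, your converse does not go through.
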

 
 Further results about these maps are given in the Ph.D. dissertation of the second named author, \cite{aykDiss}.

\subsection{Thanks}  We thank the referee for suggestions improving the exposition, and for certain additions to the bibliography.

\section{Background} 
\begin{figure}[h]
\scalebox{.3}{
\includegraphics{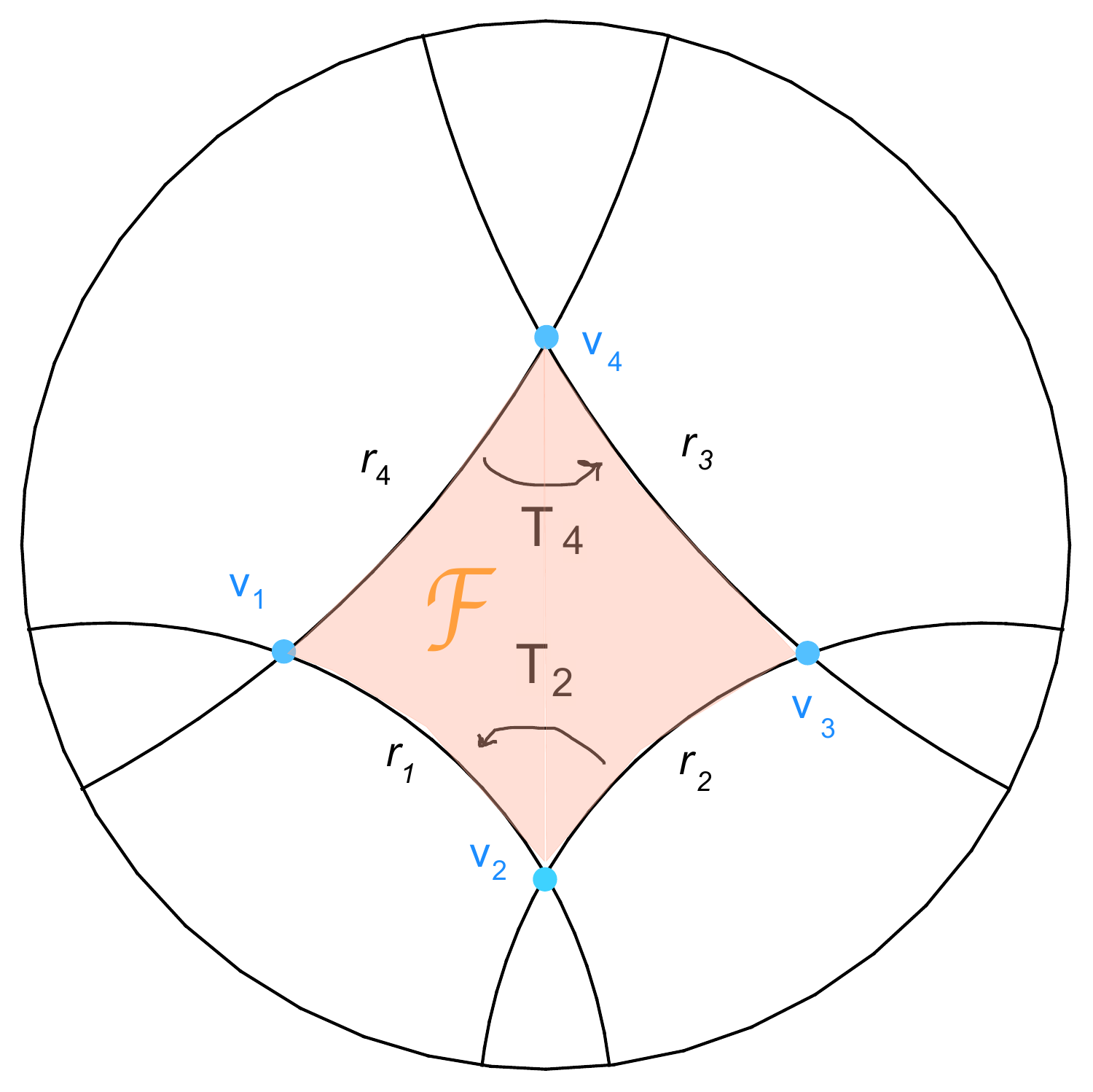} 
}
\caption{The Bowen-Series fundamental domain $\mathscr F$, here $(m_1, m_2, m_3) = (6,6,3)$.}%
\label{f:basicFunDom}%
\end{figure}
  
\subsection{The Bowen-Series fundamental domain}\label{ss:bsFun} Bowen-Series \cite{bs} construct one fundamental domain, within the disk model $\mathbb D$ of the hyperbolic plane, per each signature of Fuchsian groups.   As already stated, in the setting of triangle groups, where since the genus is zero, we use a simplified signature of $(m_1, m_2, m_3)$ and will simply call this the signature.  Recall that permutations of the indices result in signatures of isomorphic groups.

  Given $(m_1, m_2, m_3)$, their construction results in a fundamental domain  $\mathscr F$ which is a  quadrilateral; note that  we follow the convention that a `side' cannot be paired with itself, as in \cite{beardon} but not  as in \cite{bs} or \cite{pit}.
We normalize and label, see Figure~\ref{f:basicFunDom},  so that $\mathscr F$ has vertices $v_1, v_3$ each of internal angle $\pi/m_3$ and $v_2, v_4$ of respective internal angles $2 \pi/m_2, 2 \pi/m_1$ with $v_2, v_4$ lying on the central vertical line and where the $v_i$ are in counter clockwise order.     
  
  In all that follows,   indices $i\pm 1$ are to be the appropriate representative of  $i\pm 1 \pmod 4$.   Redundantly labeling the sides as $r_i, \ell_i,  1 \le i \le 4$ where $r_i$ is the side meeting $v_i$ at its right (facing inward) and $\ell_i$ at its left, we denote by $T_i$ the side pairing taking $r_i$ to its paired side.   Thus, letting 
 \[\sigma(i):=\begin{cases}
i-1\;    \text{     if }i\text{ is even}; \\
i+1\; \text{     if }i\text{ is odd}, 
\end{cases}
\]
we have $T_i(r_i)=r_j$ whenever $\sigma(i)=j$. Similarly, we define 
\begin{equation}\label{e:rho}
\rho(i) :=\sigma(i)+1
\end{equation}
and find that $\rho(i)=j$ implies that $T_i(v_i)=v_j$.   Accordingly, for all $i$ we have

\begin{equation}\label{relations}
\begin{split}
T_{\sigma(i)}T_i&=I\\
T_{\rho(i)}T_{i-1}&=I\\
T_{i-1}(v_i)&=v_{\rho(i)} \text{    and     }T_i(v_i)=v_{\rho(i)}\,.
\end{split}
\end{equation}

   In particular,  $T_2(v_2) = v_2, \, T_4(v_4) = v_4$ while (up to orientation) $T_1(r_1) = r_2, \,T_3(r_3) = r_4$ and $T_2 = T_{1}^{-1}, \,T_4 = T_{3}^{-1}$.      We have that $\{v_1, v_3\}$ forms a cycle, as   $T_1(v_1) = v_3, \, T_3(v_3) = v_1$.    We find that $T_2 T_4$ fixes $v_1$ and rotates through an angle of  $2\pi/m_3$.     
  
  Key to their use of $\mathscr F$, the side $r_i$ is contained in the isometric circle of $T_i$ for each $i$. 
  
 Note also that $\rho^2$ is the identity.   The relations \eqref{relations} hence imply 
\begin{equation}\label{e:runAround} 
T_{\rho^{2n_i-1}(i)}T_{\rho^{2n_i-2}(i)}\cdots T_{\rho^2(i)}T_{\rho(i)}T_i= (T_{\rho(i)}T_i)^{n_i} = I.
\end{equation}

  \subsection{The net of a fundamental domain.}\label{ss:theNet}      Suppose that $\mathcal F$ is a convex fundamental domain for some Fuchsian group $\Gamma$.  Let $S$ be the set of the sides of $\mathcal F$ and for each $s\in S$ let $g(s)$ be the geodesic containing $s$.  Following Adler-Flatto \cite{af}, we say that $\mathcal F$ has the  {\em extension property} if 
  \[  g(s) \cap \bigcup_{T\in \Gamma}\, T(\, \mathring{\mathcal F}\,) = \emptyset\;\; \forall s\in S.\]
 
Following \cite{bs}, the {\em net} $N$ of $\mathcal F$ is the set of   images under $\Gamma$ of the sides of $\mathcal F$.   That is, 
  \[ N = \bigcup_{T\in \Gamma, s \in S}\, T(\,  s\,).\]

    The union of the sides of $\mathcal F$ is its boundary hence $N$ is the union of the images of the boundary of $\mathcal F$.  Since the images of $\mathcal F$ tesselate,   it follows that any geodesic which avoids the images of the interior of $\mathcal F$ must be contained in $N$, and vice versa.  More formally, we have the following, showing that the second part of Condition $(*)$ in  \cite{bs} is equivalent to the extension property.
  
 \begin{Lem}\label{l:netIsNet}   Suppose that $\mathcal F$ is a convex fundamental domain for a Fuchsian group $\Gamma$.  Then the extension property holds for $\mathcal F$ if and only if 
  \[  g(s) \subset N \;\; \forall s\in S.\] 

\end{Lem}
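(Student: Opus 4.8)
The plan is to reduce the biconditional to a single set identity, namely that the net is exactly the complement in $\mathbb{D}$ of the union of the open tiles:
\[ N = \mathbb{D} \setminus \bigcup_{T\in\Gamma} T(\mathring{\mathcal{F}}). \]
Granting this, the lemma follows in one line. Each geodesic $g(s)$ is an arc of the hyperbolic plane and so lies in $\mathbb{D}$; hence the extension-property condition $g(s)\cap\bigcup_{T}T(\mathring{\mathcal{F}})=\emptyset$ is equivalent to $g(s)\subseteq \mathbb{D}\setminus\bigcup_T T(\mathring{\mathcal{F}})$, which by the identity is exactly $g(s)\subseteq N$. Thus the two inclusions of the identity feed the two directions of the iff.

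To prove the identity I would invoke the two defining features of a fundamental domain: the closed tiles $T(\overline{\mathcal{F}})$ cover $\mathbb{D}$, and their interiors $T(\mathring{\mathcal{F}})$ are pairwise disjoint. For the inclusion $\mathbb{D}\setminus\bigcup_T T(\mathring{\mathcal{F}})\subseteq N$, I take a point $p$ lying in no open tile; by the covering property $p$ lies in some closed tile $T_0(\overline{\mathcal{F}})$, and since $p\notin T_0(\mathring{\mathcal{F}})$ it lies on the boundary $\partial(T_0\mathcal{F})=T_0(\partial\mathcal{F})$. As $\partial\mathcal{F}$ is the union of the sides $s\in S$, this places $p$ in $T_0(s)\subseteq N$ for some $s$, as desired.

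The reverse inclusion $N\subseteq \mathbb{D}\setminus\bigcup_T T(\mathring{\mathcal{F}})$ is where the real work lies, and where convexity of $\mathcal{F}$ enters. A net point $p$ lies on some side image $T_0(s)\subseteq \partial(T_0\mathcal{F})$, and I must rule out $p\in T'(\mathring{\mathcal{F}})$ for \emph{every} $T'\in\Gamma$. The key local fact is that a boundary point of a convex region with nonempty interior — including a vertex — has every neighborhood meeting the interior of that region; applied to the convex tile $T_0(\mathcal{F})$, every neighborhood of $p$ meets $T_0(\mathring{\mathcal{F}})$. Were $p$ to lie in an open tile $T'(\mathring{\mathcal{F}})$, this open set would be a neighborhood of $p$ meeting $T_0(\mathring{\mathcal{F}})$, forcing $T'=T_0$ by disjointness of interiors; but then $p\in T_0(\mathring{\mathcal{F}})\cap T_0(s)=\emptyset$, a contradiction.

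I expect this last inclusion to be the main obstacle, since it is the only step that uses convexity, and its delicate case is a vertex of a tile, where one must confirm that arbitrarily small neighborhoods still meet the tile interior rather than only its boundary. Everything else is bookkeeping with the covering and disjointness properties, and once the identity $N = \mathbb{D}\setminus\bigcup_T T(\mathring{\mathcal{F}})$ is established the equivalence claimed in the lemma is immediate.
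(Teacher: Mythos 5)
Your proposal is correct and follows essentially the same route as the paper, which gives no formal proof but justifies the lemma by the preceding remark that $N$ is the union of the images of $\partial\mathcal F$ and that, since the images of $\mathcal F$ tesselate, a set avoids all images of $\mathring{\mathcal F}$ exactly when it is contained in $N$ --- i.e.\ your identity $N=\mathbb D\setminus\bigcup_{T}T(\mathring{\mathcal F})$. You simply supply the details the paper leaves implicit, in particular the convexity point that a boundary point of one tile cannot lie in the interior of another.
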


The following is an observation of Adler-Flatto \cite{af}; see the comment below their Theorem~1.1, p.~239.
 \begin{Lem}\label{l:projToGeos}   Suppose that $\mathcal F$ is a convex fundamental domain for a Fuchsian group $\Gamma$.   If the boundary of $\mathcal F$  projects to closed geodesics, then 
 the extension property holds for $\mathcal F$.
\end{Lem}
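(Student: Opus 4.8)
The plan is to show that if the boundary of $\mathcal F$ projects to closed geodesics, then for each side $s \in S$ the full geodesic $g(s)$ lies in the net $N$, so that Lemma~\ref{l:netIsNet} immediately yields the extension property. Fix a side $s$ with paired side $s' = T(s)$ for the appropriate side-pairing element $T \in \Gamma$. The hypothesis that the boundary projects to \emph{closed} geodesics is precisely the statement that, under the quotient projection $\pi \colon \mathbb D \to \mathbb D/\Gamma$, the image $\pi(s)$ is a closed geodesic on the quotient surface (orbifold). The key point I would exploit is that a closed geodesic downstairs lifts upstairs to a complete geodesic that is $\Gamma$-invariant as a set, and this complete geodesic is exactly a connected component of $N$.

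First I would set up the covering picture carefully. The projection $\pi$ maps $g(s)$, the complete geodesic containing $s$, onto a geodesic in the quotient. Saying $\pi(s)$ is a closed geodesic means that traversing $s$ and continuing along $g(s)$, upon reaching the endpoint $v$ of $s$ one meets another side whose geodesic continues the projected curve; concretely, the geodesic $g(s)$ continues into an adjacent copy $T'(\mathcal F)$ of the fundamental domain along a side that is again part of the net. Thus the step I would carry out is to follow $g(s)$ past the endpoints of $s$: at each vertex the geodesic either passes into a translate of $\mathcal F$ through one of its sides (which is in $N$ by definition) or exits along another net geodesic. The closedness hypothesis guarantees this continuation never cuts transversally into the interior $T''(\mathring{\mathcal F})$ of any tile, because a transversal crossing of a tile interior would project to a geodesic arc crossing the interior of the quotient, contradicting that $\pi(g(s))$ is a smooth closed geodesic tangent to nothing but itself.

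More precisely, the step I would make rigorous is the following dichotomy: as we extend $g(s)$ beyond $s$, the only way the extension can fail to stay in $N$ is for $g(s)$ to enter the interior of some tile $T''(\mathring{\mathcal F})$. I would argue that this forces $\pi(g(s))$ to have a transverse self-crossing or to enter the interior of the quotient orbifold in a way incompatible with being the image of the boundary; since by hypothesis $\pi(s)$ is a closed geodesic lying in the image of $\partial \mathcal F$, and the boundary maps to a union of closed geodesics that are themselves the $1$-skeleton of the tessellation downstairs, $g(s)$ must remain in the preimage $\pi^{-1}(\pi(\partial \mathcal F))$, which is exactly $N$. Hence $g(s) \subset N$ for every $s \in S$, and Lemma~\ref{l:netIsNet} finishes the argument.

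The main obstacle I expect is making precise the claim that "projects to a closed geodesic" rules out the extended geodesic entering a tile interior — that is, converting the soft statement about closed geodesics downstairs into the hard combinatorial statement $g(s) \subset N$ upstairs. The subtlety is at the vertices: when $g(s)$ reaches a vertex of the tessellation, several tiles meet, and one must verify that the geodesic continuation stays along edges rather than slicing through a tile; this is where the smoothness (geodesic, not merely piecewise-geodesic) of the projected curve does the work, since a smooth closed geodesic cannot have a corner and cannot cross the interior of a face of the tessellation without projecting to a curve that leaves the $1$-skeleton. I would handle this by working locally at a vertex, comparing the angle of $g(s)$ against the cone angles of the tessellation, and invoking that the projected curve's geodesic (hence locally length-minimizing and corner-free) nature pins down the continuation uniquely to lie in $N$.
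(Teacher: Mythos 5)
Your proposal is correct and is essentially the paper's own argument: since a geodesic is determined by any sub-arc, $\pi(g(s))$ must wrap around the closed geodesic containing $\pi(s)$, so $g(s)\subset \pi^{-1}(\pi(\partial\mathcal F))=N$, and Lemma~\ref{l:netIsNet} finishes. The worries in your middle paragraph about vertex angles and transversal crossings are unnecessary (that disjointness from tile interiors is exactly what Lemma~\ref{l:netIsNet} already encodes); the only point worth keeping from them is the one the paper handles explicitly, namely that a single side need not itself project to a closed geodesic but may be one of several sides whose union does, and your final preimage argument covers that case.
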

\begin{proof}  Suppose first that some side $s$ projects to a closed geodesic.  Then  $g(s)$ projects to (wrap around) this same geodesic.  Equivalently, $g(s)$ is contained in the $\Gamma$-orbit of $s$. That is, $g(s) \subset N$.    

Similarly, if there is a sequence of sides $s_1, \dots, s_n$ which projects to a closed geodesic, then each of the $g(s_i)$ projects to this geodesic.   Thus, each $g(s_i)$ is contained in $N$.
\end{proof}  
  
\begin{figure}[h]
\scalebox{0.3}{
\includegraphics{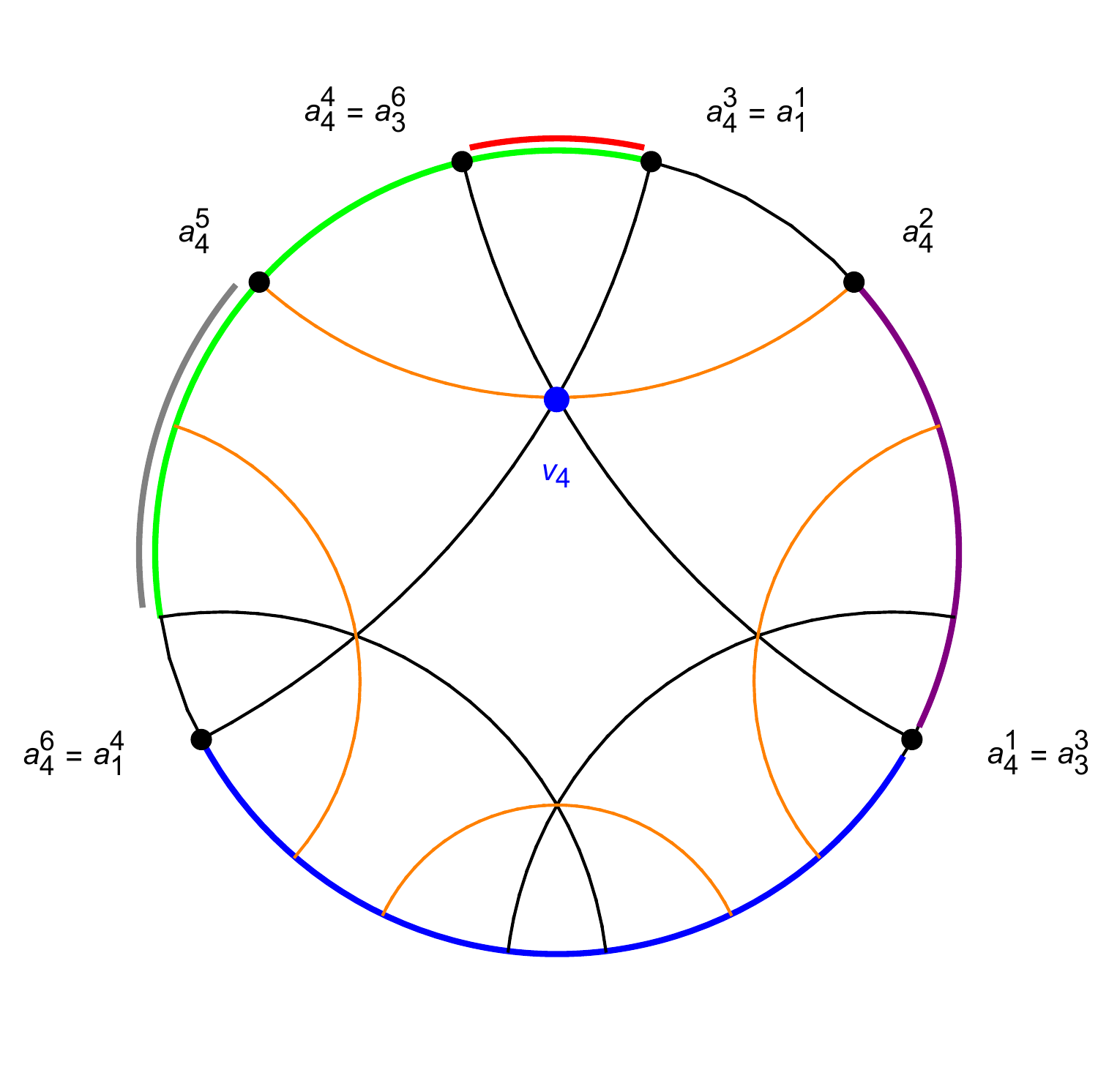}
}
\caption{The Bowen-Series fundamental domain for   $(m_1, m_2, m_3) = (6,6,3)$ has the extension property.   Note that here,  $n_i = 3$ for all $i$.  The Bowen-Series map, see \eqref{e:defF},  $f$ is such that $f(x) = T_4(x)$ on $[a_{4}^{n_4}, a_{1}^{n_1})$, in green.  Expansivity is preserved upon replacing the action of $T_4$ by $T_3$ in the overlap interval $\mathscr O_4 = L_{3}(v_4)$, hinted by red; see \eqref{e:ohEye}.   Left and right intervals in accordance with Definition~\ref{LRpartition}: In blue,  $R_1(v_4)$; in purple, $R_2(v_4)$; continuing counter clockwise, one finds $R_3(v_4)$ and then the $L_j(v_4)$ in increasing order of $j$. Hinted in gray, $\mathcal A_4$, see \eqref{e:defAsubI}.}%
\label{f:netEnds}%
\end{figure}

Following \cite{pit}, for each vertex $v \in \mathcal F$ we let $N_v$ be the set of geodesics passing through $v$ that contain the image of some side of $\mathcal F$.  That is, 
\[ N_v = \{g \; \text{geodesic} \;\vert\; v \in g;\;   \exists T \in \Gamma,  \exists s\in S,  Ts \in g\}.\]

 Suppose now that  $\mathcal F$ has the extension property.   Then Lemma~\ref{l:netIsNet}  implies that   $N_v \subset N$.   If furthermore $v$ is not an ideal point (that is, $v\notin \partial \mathbb D$), then $N_v$  is  a finite set: The tesselation by images of $\mathcal F$ meets $v$ in  finitely of these images, none of whose interiors the $g\in N_v$ can meet.\\ 

The following is a variant of a step in the proof of  [\,\cite{bs}, Lemma 2.3], see also [\,\cite{pit}, Proposition~2.2].
 \begin{Lem}\label{l:crossingFailsExtenProp}   Suppose that $\mathcal F$ is a convex fundamental domain for a Fuchsian group $\Gamma$ of finite covolume. Assume that $\mathcal F$  is not a degenerate quadrilateral.    If $\mathcal F$ satisfies the extension property, then for any $g, g' \in N$, 
 we have that $g\cap g'$  is either:  empty,   a vertex of $\mathcal F$, or equal to $g(s)$ for some side $s$.
\end{Lem}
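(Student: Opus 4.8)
The plan is to split on whether $g$ and $g'$ coincide as geodesics, and in the non-coincident case to locate their (at most one) intersection point by using the extension property to forbid it from lying in the interior of a tile or in the relative interior of an edge of the tessellation $\{T(\mathcal F) : T \in \Gamma\}$. The only two geometric inputs needed are that two distinct geodesics of $\mathbb D$ meet in at most one point, and that net geodesics avoid all tile interiors; everything else is bookkeeping.

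First I would record the shape of the net. Since $\mathcal F$ has the extension property, Lemma~\ref{l:netIsNet} gives $g(s)\subset N$ for every side $s$, and because $N$ is $\Gamma$-invariant, every geodesic belonging to $N$ has the form $T(g(s))$ for some $T\in\Gamma$ and side $s$; thus $N$ is genuinely a union of complete geodesics. Next I would transport the extension property around by the group: applying any $T_0\in\Gamma$ to the identity $g(s)\cap\bigcup_{T}T(\mathring{\mathcal F})=\emptyset$ shows that each net geodesic is disjoint from $\bigcup_{T}T(\mathring{\mathcal F})$, i.e. no net geodesic enters the interior of any tile.

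Now fix $g,g'\in N$. If $g=g'$ there is nothing to prove beyond naming the set: $g\cap g'=g$ is the complete geodesic carrying an edge of the tessellation, that is, $g(s)$ for a side $s$. So assume $g\neq g'$; then $g\cap g'$ is either empty or a single point $p$, and it remains to show that in the latter case $p$ is a vertex. Because the closures of the tiles cover $\mathbb D$ (here finite covolume guarantees that the images of $\mathcal F$ tile), $p$ lies in some tile; but $p\in g\subset N$ cannot lie in any tile interior, so $p$ lies on an edge $e=T(s)$. Suppose, for contradiction, that $p$ is a relative-interior point of $e$. The key local claim is that there is then a neighborhood $U$ of $p$ with $N\cap U=g(e)\cap U$, where $g(e)=T(g(s))$ is the complete geodesic carrying $e$: by proper discontinuity the tessellation is locally finite at the interior point $p$, and since $\mathcal F$ is a non-degenerate quadrilateral whose side-pairing matches each side with exactly one other, a relative-interior point of an edge lies on the closures of exactly the two tiles sharing $e$ and on no other edge. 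Hence the only part of the net near $p$ is $e$ itself. Since $g$ passes through $p$ and $g\subset N$, the arc $g\cap U$ lies in $g(e)$, and two geodesics sharing a nondegenerate arc coincide; so $g=g(e)$, and symmetrically $g'=g(e)$, contradicting $g\neq g'$. Therefore $p$ is a vertex, completing the trichotomy.

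I expect the main obstacle to be the local claim in the last step, namely that near a relative-interior point of an edge the net consists of a single geodesic. This is exactly where non-degeneracy of the quadrilateral and local finiteness are essential, the latter being the same finiteness of vertex cycles recorded after the definition of $N_v$: degeneracy could let several edges or tiles accumulate at such a point and destroy the ``one geodesic locally'' conclusion. Once that claim is in hand, the argument is forced, since the extension property (transported by $\Gamma$) does all the work of excluding tile interiors and the at-most-one-point fact handles the transverse case.
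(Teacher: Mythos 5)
Your argument is correct in substance but follows a genuinely different route from the paper's. The paper reduces to two net geodesics $g\in N_v$, $g'\in N_{v'}$ through distinct (and, after a further reduction, consecutive) vertices of $\mathcal F$, forms the closed triangle $\Delta=vPv'$ at the putative bad intersection point $P$, shows that an entire tile $U(\mathcal F)$ must then sit inside $\Delta$, and reaches a contradiction by invoking the area-based infinite-descent argument of [\,\cite{bs}, Lemma~2.2] (whose proof the paper patches in Remark~\ref{r:BSrunningUpSide}). You instead argue purely locally at the intersection point $p$: the extension property, transported by $\Gamma$, excludes tile interiors, and your ``one geodesic locally'' claim at a non-vertex boundary point forces $g=g'$. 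That local claim is the only delicate step and your sketch of it should be tightened slightly: the clean trichotomy is tile interior / tessellation vertex / neither, since a priori (in a non--edge-to-edge tessellation) a relative-interior point of one edge could be a vertex of another tile; in the ``neither'' case each of the finitely many tiles whose closure contains $p$ meets a small disk $U$ about $p$ in a closed half-disk bounded by a geodesic through $p$, and disjointness of the interiors together with the covering of $U$ forces exactly two such tiles bounded by one common geodesic, so that $N\cap U$ is a single geodesic. Two minor points: your conclusion is that $p$ is a vertex of the tessellation, i.e.\ some $T(v)$, which is the $\Gamma$-invariant form of the paper's ``vertex of $\mathcal F$'' and is what is actually used later; and the hypothesis is that $\mathcal F$ is \emph{not a degenerate} quadrilateral, not that it \emph{is} a quadrilateral --- your argument never uses quadrilateral-ness, so this misreading is harmless. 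What your route buys is independence from the global area argument of [\,\cite{bs}, Lemma~2.2]; what the paper's route buys is staying aligned with the Bowen--Series machinery it is in the process of correcting.
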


\begin{proof}  The other cases being trivially verified, we take $g, g'$ to be distinct and even such that there are  distinct vertices $v, v'$ of $\mathcal F$ with $g \in N_v, g'\in N_{v'}$.     We argue by contradiction.   Suppose $v, v', g, g'$ are such that $g \cap g' = P$ with the point $P$ not lying on a side of $\mathcal F$.    Let  $\Delta := vPv'$ be the closed triangle with these three points as vertices.   Since neither of $g, g'$ can meet the interior of $\mathcal F$, convexity shows that  $\Delta$ contains a consecutive vertex $v''$ of one of $v, v'$.  Relabel such that  $v, v''$ are consecutive vertices.  If $v'' \neq v'$, then let $v''' \neq v$ also be consecutive to $v''$ and $s''$ their shared side.  Convexity also shows that $g(s'')$ meets $g$.     That is, by replacing $v'$ by $v''$ and $g'$ by $g(s'')$ as necessary,  we may and do assume that $v, v'$ are consecutive vertices.

 Now let $U \in \Gamma$ be the side pairing element such that $U^{-1} (s)$ is also a side.  
  Then  $U(\mathcal F) \cap \Delta \neq \emptyset$.    Since the sides of $\Delta$ cannot intersect the interior of    $U(\mathcal F)$, we have that  $U(\mathcal F) \subset \Delta$.     By convexity, the sides $t, t'$ of $U(\mathcal F)$ meeting $s$ at $v, v'$ respectively must be such that $g(t') \cap g(t'')$ meet in a point  of $\Delta$.  But,  by [\,\cite{bs}, Lemma~2.2] this is impossible (under their running assumption of the extension property).
\end{proof} 

 \begin{Rmk}\label{r:BSrunningUpSide}  The proof of  \cite{bs} for their Lemma~2.2 does not explicitly mention a special case that could be of interest in the setting of cocompact triangle groups.    Given some triangle $vPv'$ as in the proof above,   
  they implicitly find a sequence $(r_k)_{k\ge 1}$, with $r_k$ the intersection  of  $g(t_k), g(t'_k)$ for distinct sides $t_k, t'_k$ (separated by exactly one side, say $s_k$) of  some $U_k(\mathcal F)$ such that for $k\ge 2$, $r_k$ is in the triangle of endpoints $P_{k-1}$ and the endpoints of $t_{k-1}, t'_{k-1}$  meeting $s_{k-1}$.   See [\,\cite{bs}, Figure~1].    A contradiction is raised, since the area of the original triangle is finite.   
 
 The case that they do not explicitly treat is where the sequence of the $r_k$ is eventually constant.     In that case, we can simply assume that the sequence is constant.   We can thus suppose that $g(t_1),g(t'_1)$ and $g(t_2),g(t'_2)$  all meet at $P_1$.   As in that proof, one also has that each $U_k(\mathcal F)$ is the image of $U_{k-1}(\mathcal F)$ by way of a side pairing to $s_{k-1}$, and that $s_k \neq s_{k-1}$.  Hence,     $U_2(\mathcal F)$ is a quadrilateral and thus so is each $U_k(\mathcal F)$.    Therefore, the  complement of $U_1(\mathcal F)$ in its triangle is again a triangle.   Similarly, each  union $\cup_{i=1}^k\, U_i(\mathcal F)$ gives a complement in this first triangle that is again a triangle.  Thus, the initial triangle cannot be the union of finitely many of these quadrilaterals, but  again by area considerations it cannot contain infinitely many of them; a contradiction is reached in this case as well.     
\end{Rmk}
 
\subsection{The Bowen-Series function}\label{ss:theFun}   Fix a cocompact Fuchsian triangle group signature $(m_1, m_2, m_3)$.       Assume that the  Bowen-Series fundamental domain $\mathscr F$ has the extension property.    Theorem~\ref{t:whichTrianglesWork}, below, shows that this is a restrictive assumption.

\subsubsection{Definition of function $f$ and its partition $\mathcal P$}\label{ss:defBasicProps}  To simplify notation, let  $N_i = N_{v_i}$; let also $n_i=\#N_i$.   Since $T_4$  fixes $v_4$ and rotates through an angle of $2 \pi/m_1$,  it is always the case that $2 n_4 \ge m_1$ and certainly our assumption of the extension property then shows that equality must hold: $2 n_4 = m_1$;  similarly,  $2 n_2 = m_2$ and $n_1= n_3 = m_3$.

  We label the endpoints of the $g \in N_i$, see Figure~\ref{f:netEnds}.   For each $i$,   let  $a_i^1$ be the endpoint of $g(\ell_i)$ which is on the same side of $v_i$ as is $v_{i-1}$.   Then continue labeling the endpoints of the $g \in N_i$ in counter clockwise order as $a_{i}^{2}, \cdots, a_{i}^{2n_i}$.      Following \cite{bs}, let $W_i = \{a_i^1, \cdots, a_i^{2n_i}\,\}$.

 Of course $a_{i}^{n_i+1}$ is the other endpoint of $g(\ell_i)$.    Since $g(\ell_{i+1}) \cap g(\ell_i) = v_i$, by Lemma~\ref{l:crossingFailsExtenProp} no other $g \in N$ can have an endpoint between  $a^{1}_{i+1}$ and $a_{i}^{n_i+1}$.  Since $g(\ell_{i+1}) \in N_i$,   one has the key identities
$a_{i}^{n_i} = a^{1}_{i+1}$  and $a_{i}^{2 n_i} = a^{n_{i+1}+1}_{i+1}$.  We find that the intervals $[a_{i}^{1}, a_{i}^{n_{i}}\,), \; 1\le i\le 4$ partition $\mathbb S^1$.\\

 In our notation, the function defined in \cite{bs} is 
\begin{equation}\label{e:defF}
\begin{aligned}
f: \mathbb S^1 &\to \mathbb S^1\\
     x   &\mapsto T_{i-1}(x)\;\;\;\;\text{if}\; x \in [a_{i}^{1}, a_{i}^{n_{i}}\,).
\end{aligned}
\end{equation}

\begin{figure}[h]
\scalebox{1.1}{
\includegraphics{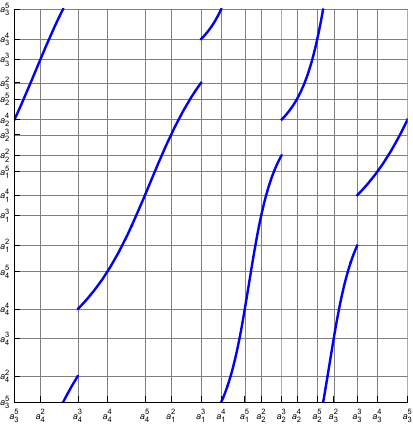} 
}
\caption{Plot representing the Bowen-Series function $f$ for signature $(6,6,3)$.  The partition $\mathcal P$, consisting of the 16 intervals indicated along the axes, is a Markov partition for $f$.  The true plot lives on a torus, here the torus is cut, using `argument' functions.  Note that, on the torus,  there are four connected components of the graph of $f$.}  
\label{f:plotBS}%
\end{figure}

The map $f$ is (eventually) expansive: For each $i$, the side $r_{i-1}$ lies on the isometric circle of $T_{i-1}$ and thus $T_{i-1}$ increases Euclidean distances on the  arc subtended by the geodesic containing $g(r_{i-1}) = g(\ell_{i})$.  This arc is   $(a_{i}^{1}, a_{i}^{n_{i}+1}\,)$.   Since $[a_{i}^{1}, a_{i}^{n_{i}}\,)$ is a subset of the arc, the eventual expansivity of $f$ easily follows.   Let $W$ be the union of the $W_i$\,; Bowen-Series show that $f$ is a Markov map with respect to the {\em partition}  $\mathcal P$  of $\mathbb S^1$ whose endpoints form $W$.  In particular, $f(W) = W$.

\subsubsection{ More partitions} 

The following partitions of $\mathbb S^1$  are also from \cite{bs}.  
\begin{Def}\label{LRpartition}
Let 
\[L_{j}(v_i)=[a_i^{2n_i-j}, a_i^{2n_i-j+1}), \;R_j(v_i)=[a_i^{j-1}, a_i^{j}),\quad\text{ where } 1\leq j\leq n_i,\]
and we take $j-1=2n_i$ when $j=1$.  
\end{Def}
 
 It follows from \eqref{wpts},  that each of  the $L_r(v_i), 2\leq r\leq n_i$ and  $R_s(v_i), 3\leq s\leq n_i$ is itself an element of $\mathcal P$.  As well, each of 
$L_1(v_i),  R_1(v_i), R_2(v_i)$ is a union of  elements of $\mathcal P$.
Furthermore, for each $i$,  the `left' and `right' intervals partition $\mathbb S^1$,  
\begin{equation}\label{e:leftRightPartition}
\mathbb S^1 = \bigsqcup_{j=1}^{n_i} \big(\,L_j(v_i)\sqcup R_j(v_i)\,\big)\,.
\end{equation}

   We use further notation from \cite{bs}: For each $i$,  let 
\begin{equation}\label{e:defAsubI} \mathcal A_i=L_1(v_i)\setminus L_{n_{i+1}}(v_{i+1}).
\end{equation}
  Thus,  $f(x) = T_i x$ holds exactly on $ \mathcal A_i \cup \bigcup_{k=2}^{n_i}\, L_k(v_i)$.

By the conformality of each $T_i$, we have for all $i$ and  $1\leq k\leq 2n_i$ 
\begin{equation}\label{wpts}
T_{i-1}(a_i^k)=a_{\rho(i)}^{k-1}\quad\text{and}\quad T_i(a_i^{k})=a_{\rho(i)}^{k+1}\;,
\end{equation} 
with values taken appropriately modulo $2n_i$.

Using \eqref{wpts} and the conformality of the transformations in $\Gamma$, for each $i$,

\begin{equation}\label{e:partition}
\begin{split}
f\mid_{L_{r}(v_i)}&=T_i,\quad \quad f(L_r(v_i))=L_{r-1}(v_{\rho(i)})\,\,\quad\text{for   }2\leq r\leq n_i\,;\\
f\mid_{R_{s}(v_i)}&=T_{i-1},\quad f(R_s(v_i))=R_{s-1}(v_{\rho(i)})\,\,\quad\text{for   }2\leq s\leq n_i\,.
\end{split}
\end{equation}

\subsection{Matching $f$-orbits of $x$ and $ T_{i-1}(x)$ for $x \in \mathscr O$.}\label{ss:matching}     In this subsection, we  continue to establish notation, and make more precise a result of \cite{bs}.  

At the expense of more doubling of notation,  
for each $i, 1\le i \le 4$,  define the corresponding {\em overlap} interval 
\begin{equation}\label{e:ohEye}
\mathscr O_i = [a_{i}^{n_i}, a_{i}^{n_i+1}).
\end{equation}
  That is,  $\mathscr O_i = L_{n_i}(v_i)$, and hence $\mathcal A_i = L_1(v_i)\setminus \mathscr O_{i+1}$.   Let $\mathscr O = \cup_{i=1}^4\, \mathscr O_i$\,.

 \begin{Lem}\label{l:keyContainers}  Each $\mathscr O_i$  is   a leftmost subinterval of each of $R_2(v_{i+1})$, a rightmost subinterval of $L_1(v_{i-1})$ and is contained in $R_1(v_{i+2})$.  No other $R_j(v_k)$ or $L_j(v_k)$ meets  $\mathscr O_i$. 
 \end{Lem}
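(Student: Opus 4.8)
We need to prove Lemma~\ref{l:keyContainers}, which locates each overlap interval $\mathscr O_i$ precisely within the left/right partitions associated to the various vertices. The claim has four parts: (a) $\mathscr O_i$ is the leftmost subinterval of $R_2(v_{i+1})$; (b) $\mathscr O_i$ is the rightmost subinterval of $L_1(v_{i-1})$; (c) $\mathscr O_i \subset R_1(v_{i+2})$; and (d) no other $R_j(v_k)$ or $L_j(v_k)$ meets $\mathscr O_i$.

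**The key tool.** The whole argument should rest on the identities $a_i^{n_i} = a_{i+1}^1$ and $a_i^{2n_i} = a_{i+1}^{n_{i+1}+1}$, established in Subsection~\ref{ss:defBasicProps}, together with the consequence of Lemma~\ref{l:crossingFailsExtenProp} that between $a_{i+1}^1$ and $a_i^{n_i+1}$ no other endpoint of a net geodesic can lie. Recall $\mathscr O_i = L_{n_i}(v_i) = [a_i^{n_i}, a_i^{n_i+1})$. So my plan is to rewrite both endpoints of $\mathscr O_i$ in terms of the labels attached to $v_{i+1}$ and to $v_{i-1}$, and then read off containment directly from Definition~\ref{LRpartition}.

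**Carrying out (a) and (b).** First I would translate to the $v_{i+1}$ labeling. Since $a_i^{n_i} = a_{i+1}^1$, the left endpoint of $\mathscr O_i$ is $a_{i+1}^1$, which is exactly the left endpoint of $R_2(v_{i+1}) = [a_{i+1}^1, a_{i+1}^2)$. To finish (a) it remains to see that $a_i^{n_i+1}$ lies at or before $a_{i+1}^2$; this follows because $g(\ell_{i+1})$ and $g(\ell_i)$ are the only net geodesics through $v_i$ bounding this wedge, so by Lemma~\ref{l:crossingFailsExtenProp} no endpoint intervenes, forcing $\mathscr O_i$ to sit as the initial (leftmost) piece of $R_2(v_{i+1})$. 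For (b) I would similarly use the reflected identity: expressing $\mathscr O_i$ in the $v_{i-1}$ labels, its right endpoint should match the right endpoint of $L_1(v_{i-1}) = [a_{i-1}^{2n_{i-1}-1}, a_{i-1}^{2n_{i-1}})$, and the no-crossing consequence again prevents any endpoint from cutting it, making $\mathscr O_i$ the rightmost subinterval. The analogue of the identity $a_i^{2n_i} = a_{i+1}^{n_{i+1}+1}$ applied at the index $i-1$ is what I expect to do the bookkeeping here.

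**Carrying out (c) and (d).** For (c), $R_1(v_{i+2}) = [a_{i+2}^{2n_{i+2}}, a_{i+2}^1)$ is the wedge straddling the point $a_{i+2}^1 = a_{i+2}^{2n_{i+2}}$-region near $v_{i+2}$; I would check that $\mathscr O_i$, lying antipodally across $\mathbb S^1$ from $v_{i+2}$, falls inside this single interval by comparing its endpoints with $a_{i+2}^{2n_{i+2}}$ and $a_{i+2}^1$ using the same endpoint identities. Part (d) is then essentially a counting/exhaustion statement: since the $L_j(v_k), R_j(v_k)$ for fixed $k$ partition $\mathbb S^1$ by \eqref{e:leftRightPartition}, and I have already pinned down which single interval of the $v_{i+1}$-, $v_{i-1}$-, and $v_{i+2}$-families contains $\mathscr O_i$, I only need to rule out that $\mathscr O_i$ spills over into a neighboring interval of those three families (handled by the leftmost/rightmost/containment assertions of (a)--(c)) and that no interval associated to $v_i$ itself other than $L_{n_i}(v_i)$ meets it (immediate, since $\mathscr O_i = L_{n_i}(v_i)$ is itself a partition element for the $v_i$-family). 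The main obstacle I anticipate is purely notational: keeping the modular index arithmetic on the $a_i^k$ consistent across the four vertices, especially reconciling the two endpoint identities with the counterclockwise ordering convention, so that the ``leftmost/rightmost'' adjectives come out with the correct orientation rather than reversed.
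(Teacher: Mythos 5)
Your proposal is correct and takes essentially the same route as the paper: the paper's entire proof is the one-line remark that the lemma ``is verified directly from the definition of the intervals,'' and your argument simply carries out that verification using Definition~\ref{LRpartition}, the endpoint identities $a_i^{n_i}=a_{i+1}^{1}$ and $a_i^{2n_i}=a_{i+1}^{n_{i+1}+1}$, and the no-intervening-endpoint consequence of Lemma~\ref{l:crossingFailsExtenProp} recorded in Subsection~\ref{ss:defBasicProps}. The only blemish is the garbled description of $R_1(v_{i+2})$ (in particular $a_{i+2}^{1}\neq a_{i+2}^{2n_{i+2}}$), but the stated plan of comparing endpoints there is the right one and does not affect the argument.
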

\begin{proof}  This is verified directly from  the definition of the intervals. 
\end{proof}

With our usual conventions, let  
\[\theta(i)=\rho^{n_i-1}(i),\]  
where $\rho$ is given in \eqref{e:rho}.   
 With $i$ fixed,  let 
\begin{equation}\label{e:indexSequence} 
i_0 = i \text{ and  } i_k = \theta(i_{k-1}) +1, \; \forall k \in \mathbb N.  
\end{equation}

  Given $x \in \mathscr O_i$, let  
\begin{equation}\label{e:orbitSequences} 
 \begin{cases}  x_0 = x, &\text{and }\; x_{k+1} = f^{n_{i_k}-1}(x_{k})\; \forall k \ge 0;\\
                         y_0 = T_{i-1} x, &\text{and }\; y_{k+1} = f^{n_{i_{k}}-1}(y_{k})\; \forall k \ge 0.\\
 \end{cases} 
\end{equation}
Thus,  for each $\ell \in \mathbb N$ letting 
\[r_{\ell} =  \sum_{k=0}^{\ell-1}\, (n_{i_k}-1),\]
for $k\ge 2$,  we have $x_k = f^{r_k}(x) = f^{r_k - r_{k-1}}(x_{k-1}) $ and similarly for $y_k$.    We think of $x_k \mapsto x_{k+1}$ as a {\em giant step}  in the orbit of $x$ and each  $f^j(x_k)\mapsto f^{j+1}(x_k)$ with $j \le n_{i_k}-1$ as a {\em baby step},  and similarly for maps on the $f$-orbit of of $y_0$.

 Still with $i$ fixed,  for each $\ell \in \mathbb N$, let  
 \[ M_{\ell} = M_{\ell}^{(i)} = \{x \in \mathscr O_i\,\vert\, f(x_{\ell}) = y_{\ell},\;\text{with minimal }\ell \}.\]
Thus, for $x \in M_{\ell}$ setting $p=r_{\ell}$ we have that 
\[ f^{p+1}(x) = f^{p}(\, T_{i-1}(x)\,).\]  
In Lemma~\ref{l:allMatching}, below,  we show that the $M_{\ell}$  partition $\mathscr O_i$.

 \bigskip
 
   Fix $i$.  For $x \in \mathscr O_i$ by   \eqref{e:partition} and the definition of the Bowen-Series function $f$,  
\begin{equation}\label{e:landingPattern} 
f^{n_i}(x) = \begin{cases} T_{\theta(i)}f^{n_i-1}(x)&\text{if}\;f^{n_i-1}(x)\in  \mathcal A_{\theta(i)} = L_1(v_{\theta(i)})\setminus \mathscr O_{\theta(i)+1};\\
\\
                                         T_{\theta(i)+1}f^{n_i-1}(x)&\text{if}\;f^{n_i-1}(x)\in \mathscr O_{\theta(i)+1}.
                   \end{cases}
\end{equation}
The first statement of the following shows that for every $x \in  \mathscr O_i$ one of the two conditions above does indeed hold.

This lemma and its proof are extracted from the proof of [\,\cite{bs}, Lemma~2.4]. 
 \begin{Lem}\label{l:classificationOKandOrbit1} The map $f^{n_i-1}$ restricted to  $\mathscr O_i$  gives a homeomorphism onto $L_1(v_{\theta(i)})$.    Furthermore, for all $x\in  L_{n_i}(v_i)$,
\begin{equation}\label{orbit1}
T_{\theta(i)}f^{n_i-1}(x)=f^{n_i-1}(\, T_{i-1}(x)\,).
\end{equation}
Moreover,  $f^{n_i}(x) = f^{n_i-1}(\, T_{i-1}(x)\,)$ holds on  the inverse image of $\mathcal A_{\theta(i)}$ under the homeomorphism. 
\end{Lem}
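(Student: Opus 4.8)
The plan is to prove the three assertions in sequence, since each builds on its predecessor. The underlying engine is the pair of conformality identities \eqref{wpts} together with the action of $f$ on the left/right intervals recorded in \eqref{e:partition}.

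\textbf{Step 1: the homeomorphism.}
First I would establish that $f^{n_i-1}$ maps $\mathscr O_i = L_{n_i}(v_i)$ homeomorphically onto $L_1(v_{\theta(i)})$. The idea is to iterate the first line of \eqref{e:partition}. Starting from $L_{n_i}(v_i)$, one application of $f$ (acting as $T_i$, since on left intervals $L_r(v_i)$ with $r\ge 2$ the map is $T_i$) sends $L_{n_i}(v_i)$ to $L_{n_i-1}(v_{\rho(i)})$. Repeating, after $k$ steps one reaches $L_{n_i-k}(v_{\rho^k(i)})$, provided the index $n_i-k$ stays in the admissible range $2\le r\le n_i$ so that \eqref{e:partition} applies and $f$ continues to act by the relevant $T$. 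After exactly $n_i-1$ steps the subscript has decreased from $n_i$ to $1$, landing in $L_1(v_{\rho^{n_i-1}(i)}) = L_1(v_{\theta(i)})$ by the definition $\theta(i)=\rho^{n_i-1}(i)$. Each intermediate step is a restriction of an element of $\Gamma$, hence a homeomorphism onto its image, so the composite is a homeomorphism. I should check that the intervals visited are genuine single partition-type intervals (not unions) for $2\le r\le n_i$, which is exactly the structural fact recorded right after Definition~\ref{LRpartition}.

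\textbf{Step 2: the matching identity \eqref{orbit1}.}
The plan here is to use conformality of $T_{i-1}$ to track endpoints via \eqref{wpts}, then extend from endpoints to the whole interval by the homeomorphism property. By the first identity in \eqref{wpts}, $T_{i-1}$ carries the labels $a_i^k$ to $a_{\rho(i)}^{k-1}$; applying this to the endpoints of $\mathscr O_i = [a_i^{n_i}, a_i^{n_i+1})$ shows $T_{i-1}(\mathscr O_i)$ is an interval whose endpoints match those of a suitable left interval at $v_{\rho(i)}$. Running the $f$-orbit of $T_{i-1}(x)$ forward $n_i-1$ baby steps (again via \eqref{e:partition}) and comparing with $T_{\theta(i)}f^{n_i-1}(x)$, both sides are orientation-preserving homeomorphisms of $\mathscr O_i$ onto a common target interval agreeing on a dense set (the $W$-labelled endpoints), and the shared group relations from \eqref{relations} force the two composite Möbius words to coincide as elements of $\Gamma$. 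The cleanest route is to verify the two compositions agree as group elements: each side is a product of side-pairings dictated by the index sequence, and \eqref{relations} collapses them to the same word, so \eqref{orbit1} holds identically on the interval, not merely on endpoints.

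\textbf{Step 3: the final ``Moreover'' statement.}
Given \eqref{orbit1}, the last claim is essentially a bookkeeping consequence of the case split in \eqref{e:landingPattern}. On the inverse image under the Step-1 homeomorphism of $\mathcal A_{\theta(i)} = L_1(v_{\theta(i)})\setminus \mathscr O_{\theta(i)+1}$, we have $f^{n_i-1}(x)\in\mathcal A_{\theta(i)}$, so the first branch of \eqref{e:landingPattern} applies and $f^{n_i}(x) = T_{\theta(i)}f^{n_i-1}(x)$. Substituting \eqref{orbit1} into the right-hand side immediately yields $f^{n_i}(x) = f^{n_i-1}(T_{i-1}(x))$, as required. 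The main obstacle, I expect, is Step~2: one must be careful that the orbit of $T_{i-1}(x)$ really does proceed by the same pattern of baby steps governed by \eqref{e:partition} (i.e.\ that at each stage the point lies in a left interval on which $f$ acts by the expected $T$), rather than inadvertently crossing into a right interval or an overlap interval where the map would switch generators; verifying this requires tracking the index sequence $i_k$ of \eqref{e:indexSequence} and confirming the endpoint identities $a_i^{n_i}=a_{i+1}^1$, $a_i^{2n_i}=a_{i+1}^{n_{i+1}+1}$ keep the iterates inside the admissible ranges.
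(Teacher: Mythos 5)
Your overall architecture matches the paper's: Step 1 iterates the first line of \eqref{e:partition} to realize $f^{n_i-1}$ on $\mathscr O_i$ as the explicit word $T_{\rho^{n_i-2}(i)}\cdots T_{\rho(i)}T_i$ landing in $L_1(v_{\theta(i)})$; Step 3 is exactly the paper's observation that the ``Moreover'' clause follows from \eqref{e:landingPattern} once the first two statements are in place; and your decision in Step 2 to prove \eqref{orbit1} by checking that two words in the generators coincide in $\Gamma$ via \eqref{relations} is also the paper's route. Steps 1 and 3 are fine as written.

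There is, however, a genuine error in Step 2 that would derail the word computation. From \eqref{wpts}, $T_{i-1}$ sends $\mathscr O_i = [a_i^{n_i}, a_i^{n_i+1})$ to $[a_{\rho(i)}^{n_i-1}, a_{\rho(i)}^{n_i}) = R_{n_i}(v_{\rho(i)})$, a \emph{right} interval, not a left interval as you assert. Consequently the entire forward orbit of $y = T_{i-1}(x)$ proceeds through right intervals $R_{n_i-j}(v_{\rho^{1+j}(i)})$ and is governed by the \emph{second} line of \eqref{e:partition}, where $f$ acts by the generators $T_{\rho^k(i)-1}$, yielding $f^{n_i-1}(T_{i-1}(x)) = T_{\rho^{n_i-1}(i)-1}\cdots T_{\rho(i)-1}T_{i-1}(x)$. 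Your closing worry --- that the orbit of $T_{i-1}(x)$ might ``inadvertently cross into a right interval \dots\ where the map would switch generators'' --- shows this is not a slip of the pen: you are set up to match the wrong word against $T_{\theta(i)}T_{\rho^{n_i-2}(i)}\cdots T_i$, and the identity would then fail to verify. (A smaller issue: the $W$-labelled endpoints inside an interval are finitely many, not dense; but since two elements of $\Gamma$ agreeing on an interval coincide, your fallback of comparing group elements is the correct repair.) Once the right-interval word is in hand, the verification does go through, but it requires the case analysis on $\theta(i)$ recorded in \eqref{e:ThetaValues} together with the relations $T_{\sigma(i)}T_i = I$ and $T_{\rho(i)}T_{i-1}=I$ from \eqref{relations}; your proposal elides this case split, which is where the actual content of \eqref{orbit1} lives.
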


\begin{proof}  [Sketch]
The transitivity of the equalities in the top line of \eqref{e:partition} gives  
\begin{equation}\label{e:babystepsLeftOrbit}
\begin{aligned}
f^{n_i-1}:  \mathscr O_i = L_{n_i}(v_i) &\xrightarrow{\sim} L_1(v_{\rho^{n_i-1}(i)})\\
                                x &\mapsto T_{\rho^{n_i-2}(i)}\cdots T_{\rho(i)}T_i(x).
\end{aligned}                                
\end{equation}  
By definition,  $\theta(i) = \rho^{n_i-1}(i)$ and thus the first statement holds.   Due to \eqref{e:landingPattern}, the first and second statements imply the third statement.
\medskip

To show that \eqref {orbit1} holds, first note that 
\begin{equation}\label{e:ThetaValues}
 \theta(i) = \begin{cases}  i&\text{ if } 2 \mid i;\\
                                           i &\text{ if }  2 \nmid i, \, 2 \nmid n_i\,;\\
                                           \rho(i)&\text{ if }  2 \nmid i, \, 2 \mid n_i\,.
                    \end{cases}
\end{equation}

From \eqref {e:babystepsLeftOrbit},\eqref{e:ThetaValues} and the fact that $\rho^2$ is the identity, we find
\[
f^{n_i-1}(x)=   \begin{cases}  T_i^{n_i-1}(x)\in L_1(v_i)   &\text{if}\;\; 2\mid i\,;\\
  \\
                                               (T_{\rho(i)}T_i)^{\frac{n_i-1}{2}}(x)\in L_1(v_i)&\text{ if }  2 \nmid i, \, 2 \nmid n_i;\\
\\
                                               T_i(T_{\rho(i)}T_i)^{\frac{n_i-2}{2}}(x)\in L_1(v_{\rho(i)})&\text{if}\;\;   2\nmid i,\, 2\mid n_i\,.\\
                                             
                       \end{cases}
\]

 On the other hand, using \eqref{wpts}  one finds $T_{i-1}(x)\in R_{n_i}(v_{\rho(i)})$ and, similarly to the above \eqref{e:partition} gives 
 \begin{equation}\label{e:rightLandsInR} 
 f^{n_i-1}(T_{i-1}(x))=T_{\rho^{n_i-1}(i)-1}\cdots T_{\rho^2(i)-1}T_{\rho(i)-1}T_{i-1}(x)\in R_1(v_{\rho^{n_i}(i)}).
 \end{equation}      
Again by the fact that $\rho^2$ is the identity, one has 
\[
f^{n_i-1}(T_{i-1}(x))=   \begin{cases}  T_{i-1}^{n_i}(x)\in R_1(v_i) &\text{if}\;\; 2\mid i\,;\\
\\
T_{i-1}(T_{\rho(i)-1}T_{i-1})^{\frac{n_i-1}{2}}(x)\in R_1(v_{\rho(i)})&\text{ if } 2 \nmid i, \, 2 \nmid n_i;\\
\\
                                               (T_{\rho(i)-1}T_{i-1})^{\frac{n_i}{2}}(x)\in R_1(v_i)&\text{if}\;\;   2\nmid i,\,2\mid n_i\,;
                                    \end{cases}
\]

If $i$ is even, then   \eqref{relations} and \eqref{e:runAround} give $T_{i}^{-1}=T_{i-1}$ and  $T_{i}^{2n_i}=I$. We find 
\[ f^{n_i-1}(T_{i-1}(x))=T_{i-1}^{n_i}(x)=T_i^{-n_i}(x)=T_i^{n_i}(x)=T_iT_i^{n_i-1}(x)=T_{\theta(i)}f^{n_i-1}(x).\] 
The other cases offer only slightly more difficulty.
\end{proof}

 \begin{Lem}\label{l:allMatching}   Fix $i \in \{1, \dots, 4\}$. With notation as above, 
\[\mathscr O_i = \cup_{\ell=1}^{\infty}\, M_{\ell}\,.\]
Furthermore, for $x \in M_{\ell}$  and $k <\ell$, we have   $x_k \in \mathscr O_{i_k}$. Moreover, the $f$-orbit segment from $x$ to $f^{r_{\ell}-1}(x)$ meets $\mathscr O$ in exactly the $x_k, k<\ell$.
\end{Lem}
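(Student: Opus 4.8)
The plan is to run a single self-similar recursion driven by Lemma~\ref{l:classificationOKandOrbit1}, and then to close the argument with an expansivity estimate. Fix $i$ and $x\in\mathscr O_i$. First I would record the invariant that propagates the hypotheses of Lemma~\ref{l:classificationOKandOrbit1} along the giant steps: for every $k$ at which no match has yet occurred, $x_k\in\mathscr O_{i_k}$ and $y_k=T_{i_k-1}(x_k)$. The base case $k=0$ is the definition. For the inductive step I apply Lemma~\ref{l:classificationOKandOrbit1} at the index $i_k$: it gives $x_{k+1}=f^{\,n_{i_k}-1}(x_k)\in L_1(v_{\theta(i_k)})$ and, by \eqref{orbit1}, $y_{k+1}=f^{\,n_{i_k}-1}(T_{i_k-1}x_k)=T_{\theta(i_k)}(x_{k+1})$. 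Now the decomposition $L_1(v_{\theta(i_k)})=\mathcal A_{\theta(i_k)}\sqcup\mathscr O_{\theta(i_k)+1}=\mathcal A_{\theta(i_k)}\sqcup\mathscr O_{i_{k+1}}$ splits into two cases. If $x_{k+1}\in\mathcal A_{\theta(i_k)}$, the third assertion of Lemma~\ref{l:classificationOKandOrbit1} yields $f(x_{k+1})=y_{k+1}$, a match at level $k+1$. If $x_{k+1}\in\mathscr O_{i_{k+1}}$, then since $i_{k+1}=\theta(i_k)+1$ one has $y_{k+1}=T_{i_{k+1}-1}(x_{k+1})$, so the invariant persists with no match. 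This proves the invariant, and in particular the \emph{furthermore} clause, since $x\in M_\ell$ forces the first $\ell-1$ giant steps to be of the recursive type and hence $x_k\in\mathscr O_{i_k}$ for $k<\ell$.

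Next I would telescope. Setting $P_L=\{x\in\mathscr O_i : x_k\in\mathscr O_{i_k}\ \text{for}\ 1\le k\le L\}$ with $P_0=\mathscr O_i$, the dichotomy above says precisely that $P_L=M_{L+1}\sqcup P_{L+1}$, so that $\mathscr O_i=\bigsqcup_{\ell=1}^{L}M_\ell\sqcup P_L$ for every $L$. Hence the identity $\mathscr O_i=\bigcup_{\ell}M_\ell$ is equivalent to $\bigcap_L P_L=\emptyset$.

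The hard part is this last emptiness. Each $P_L$ is an interval, being the $f^{r_L}$-preimage — through the composed, orientation-preserving homeomorphisms of \eqref{e:babystepsLeftOrbit} — of the subinterval $\mathscr O_{i_L}$. Because elements of $\text{PSL}_2(\mathbb R)$ preserve the orientation of $\mathbb S^1$ and because $\mathscr O_{\theta(i_k)+1}$ is the counterclockwise subinterval of $L_1(v_{\theta(i_k)})$ (they share the endpoint $a_{\theta(i_k)}^{2n_{\theta(i_k)}}$), the interval $P_{L+1}$ is the counterclockwise subinterval of $P_L$; inductively, all the $P_L$ share the \emph{excluded} right endpoint $a_i^{n_i+1}$ of $\mathscr O_i$. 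It then remains to show $|P_L|\to 0$: on $P_L$ the map $f^{r_L}$ is a single monotone branch whose image is $\mathscr O_{i_L}$, of length at most $\max_q|\mathscr O_q|$, whereas eventual expansivity forces the image of any fixed nondegenerate subinterval to grow without bound as $r_L\to\infty$; thus $P_L$ contains no nondegenerate interval, and being nested half-open intervals with common excluded right endpoint they have empty intersection. Controlling the compounding of expansion along the monotone branch — e.g. by a bounded-distortion or shrinking-cylinders argument, since the derivative of each giant-step map degenerates to $1$ at the ends of its arc — is the step I expect to require the most care.

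Finally I would verify the \emph{moreover} clause. In the segment $x,f(x),\dots,f^{r_\ell-1}(x)$ the giant-step points occur at the times $r_0<r_1<\dots<r_{\ell-1}\le r_\ell-1$, and each $x_k=f^{r_k}(x)\in\mathscr O_{i_k}\subset\mathscr O$. For a remaining time $r_k+j$ with $1\le j\le n_{i_k}-2$, equation \eqref{e:partition} gives $f^{\,j}(x_k)\in L_{n_{i_k}-j}(v_{\rho^j(i_k)})$. Since $\rho$ fixes the even indices and interchanges $1$ and $3$, and $n_1=n_3$, one has $n_{\rho^j(i_k)}=n_{i_k}$, so the index $n_{i_k}-j$ is strictly less than the maximal left-index $n_{\rho^j(i_k)}$ at that vertex; hence $f^{\,j}(x_k)$ lies in a left interval that is not the deepest one, and a short check of the endpoint labels via \eqref{wpts} shows $L_{n_{i_k}-j}(v_{\rho^j(i_k)})$ equals no $\mathscr O_q$. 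Therefore the orbit segment meets $\mathscr O$ in exactly the points $x_k$, $k<\ell$.
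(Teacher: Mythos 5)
Your proposal is correct and follows essentially the same route as the paper: the same induction on giant steps via Lemma~\ref{l:classificationOKandOrbit1} showing that the unmatched set is a nested sequence of subintervals of $\mathscr O_i$, the same contradiction between eventual expansivity and the containment $f^{r_\ell}(J)\subset L_{n_{i_\ell}}(v_{i_\ell})$, and the same baby-step bookkeeping for the final clause. The bounded-distortion step you flag as delicate is not needed in the paper, since eventual expansivity there is uniform ($f$ is nowhere contracting and some fixed iterate expands by a definite factor), which already forces $|f^{r_\ell}(J)|\to\infty$ for any nondegenerate interval $J$.
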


\begin{proof}  Set $i_0 = i$. Lemma~\ref{l:classificationOKandOrbit1}  shows that   $M_1$ is the inverse image of $\mathcal A_{i_1 -1}$ under the restriction of $f^{r_1}$  to $\mathscr O_i$.
   This is 
a non-empty subinterval of $\mathscr O_i$ whose complement is an interval.   Applying the lemma again shows that on this complement  $f^{r_2}$ is the composition of homeomorphisms and hence is a homeomorphism onto its image.  
Hence, $M_1 \cup M_2$ is a non-empty subinterval  whose complement is  an interval  to which  $f^{r_3}$ restricts to be a homeomorphism.   We can clearly repeat this argument indefinitely:  Each $M_{\ell}$  is non-empty,  and their  union over all $\ell$ gives a subinterval of  $\mathscr O_i$ whose complement is an interval.  

We now have that $\mathscr O_i \setminus\cup_{\ell=1}^{\infty}\, M_{\ell}$ is either empty or a finite length interval, say $J$.
Since
$f$ is eventually expansive (and $f$ itself is never contractive),  the length of  $f^{r_{\ell}}(J)$ is unbounded as $\ell\to \infty$. 
However,   $f^{r_{\ell}}(J)\subset L_{n_{i_{\ell}}}(v_{i_{\ell}})$ for every $\ell$.   This contradiction shows that $\mathscr O_i \setminus\cup_{\ell=1}^{\infty}\, M_{\ell}$ is in fact empty.

 By definition, $x_0 \in \mathscr O_{i_0}$.   For $1\le k <\ell$,  Lemma~\ref{l:classificationOKandOrbit1} shows that  $x_k \in \mathscr O_{\theta(i_{k-1})+1}$.  Since $\theta(i_{k-1})+1 = i_k$, we have that $x_k$ does belong to $\mathscr O_{i_k}$.
 
 Finally, by \eqref{e:partition} each baby step is in an interval of the form $L_j(v_{\ell})$ with $j \neq 1$.  
Thus, by Lemma~\ref{l:keyContainers} none of these is in $\mathscr O$.
\end{proof}

\subsection{Strong orbit-equivalence }\label{ss:Strong}   Orbit equivalence between a function $g$ and a group $G$ acting on $\mathbb S^1$ holds when for any pair  $(x,y) \in \mathbb S^1 \times \mathbb S^1,  \exists T \in G$ such that $y = Tx$ if and only if $g^p(x) = g^q(y)$ for some $p,q \ge 0$.   Bowen and Series show the orbit equivalence of their function $f$ and the  group action of $\Gamma$, at least up to a finite number of exceptions  in $\mathbb S^1 \times \mathbb S^1$.   Morita \cite{Morita}  shows that the exceptional set is empty.

We use notation introduced in \cite{pit}.  With $g, G$ as above, for  each $x$, let  $\gamma_1[x] = \gamma[x] \in G$ such that $g(x) = \gamma[x] x$, and for $p\in \mathbb N$  let $\gamma_p[x] = \gamma[\, \gamma_{p-1}[x] x\,]$.   Thus, for any $p\in \mathbb N$, one has   $g^p(x)   = \gamma_p[x]\, x$.     
A property observed by  Morita  to hold for Bowen-Series functions,   
was named by Pit \cite{pit}  {\em strong orbit-equivalence}:    $\forall x \in \mathbb S^1, \forall T \in G, \; \exists p,q \ge 0$ such that  $\gamma_p[x] = \gamma_q[\, T x]\, T$ in $G$.\\

We now mildly generalize some results in \cite{pit}.    
 \begin{Lem}\label{l:strongExpanGivesPerIffHyper}    Suppose that a function $g$ is expansive, given on intervals by elements of a Fuchsian group $G$ acting on $\mathbb S^1$.   Then every point of finite $g$-orbit is a  fixed point of $G$.
 \end{Lem}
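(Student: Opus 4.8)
The plan is to reduce everything to a single periodic point sitting inside the forward orbit of $x$, and then transport its fixed-point property back to $x$ by a group element.

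First I would unwind the hypothesis of a finite orbit. Since $\{g^n(x) : n \ge 0\}$ is a finite set, the pigeonhole principle produces integers $0 \le a < b$ with $g^a(x) = g^b(x)$. Writing $z := g^a(x)$ and $m := b - a \ge 1$, this says $g^m(z) = z$, so $z$ is a genuine periodic point lying in the forward $g$-orbit of $x$. Because $g$ is given on intervals by elements of $G$, I may use the orbit cocycle from Subsection~\ref{ss:Strong}: there are elements $\gamma_p[\,\cdot\,] \in G$ with $g^p(w) = \gamma_p[w]\, w$ for all $w$ and $p$. In particular $z = \gamma_a[x]\, x$, so $x = \gamma_a[x]^{-1} z$ lies in the single $G$-orbit of $z$.

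The crux is to show that $z$ is fixed by a \emph{nontrivial} element of $G$, and this is exactly where expansivity enters. Applying the cocycle to the period relation gives $z = g^m(z) = \gamma_m[z]\, z$, so $h := \gamma_m[z] \in G$ fixes $z$. The only thing that could go wrong is $h = \mathrm{id}$, for then $z$ would merely be a fixed point of the identity and we would learn nothing. I would rule this out as follows: on the partition interval whose length-$m$ itinerary agrees with that of $z$, the iterate $g^m$ coincides with the single M\"obius map $h$; if $h = \mathrm{id}$ then $g^m$ would be the identity on a whole subinterval, hence $g^{mk}$ would be the identity there for every $k$, contradicting the eventual expansivity of $g$ (a map that is the identity on an interval cannot expand distances on it). Therefore $h \ne \mathrm{id}$, and $z$ is a genuine fixed point of $G$.

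Finally I would conjugate. Setting $k := \gamma_a[x]^{-1}$, so that $x = k z$, the element $k h k^{-1} \in G$ is nontrivial and satisfies $(k h k^{-1})\, x = k h k^{-1} k z = k h z = k z = x$. Hence $x$ is fixed by the nontrivial element $k h k^{-1}$ of $G$, i.e.\ $x$ is a fixed point of $G$, as claimed; the case $a = 0$, where $x = z$ is already periodic, is the special instance $k = \mathrm{id}$. I expect the main obstacle to be precisely the nontriviality step: making rigorous that $g^m$ is realized by the single group element $h$ on a one-sided neighborhood of $z$ sharing its itinerary, and that expansivity genuinely forbids $h = \mathrm{id}$. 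Everything else is formal manipulation of the orbit cocycle and a conjugation.
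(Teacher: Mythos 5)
Your proposal is correct and follows essentially the same route as the paper: find a periodic point in the forward orbit, observe that the cocycle element $\gamma_m[z]$ fixes it and agrees with $g^m$ on an interval, rule out the identity by expansivity, and conjugate back to $x$. The step you flag as the main obstacle is handled in the paper exactly as you describe, in one line.
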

\begin{proof}  Suppose that $x \in \mathbb S^1$ is periodic, with say $g^p(x) = x$.   Then  $\gamma_p[x] \in G$ fixes $x$.   Since $g$ is given piecewise by elements of $G$, there is a non-empty interval $I$ containing $x$ such that $g^p$ agrees with the action of $\gamma_p[x]$ on $I$.   Since $g$ is expansive, $g^p$ here is not the identity.  We conclude that $x$ is a fixed point for $G$.    If $y$ is any point of finite $g$-orbit, then $y$  must be have a periodic point $x$ in its forward orbit.   Say $g^q(y) = x$.   We then find that $(\gamma_q[y])^{-1}\cdot \gamma_p[x]\cdot \gamma_q[y]$ fixes $y$.
\end{proof}

 \begin{Lem}\label{l:strengtheningOrbEquiv}    Suppose that a function $g$ is 
  given  on left closed, right open  intervals by elements of a Fuchsian group $G$ acting on $\mathbb S^1$,  and that orbit equivalence holds.   Then  strong orbit equivalence holds for $g, G$.
\end{Lem}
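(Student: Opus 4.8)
The plan is to run the argument through the pointwise identity furnished by orbit equivalence and then promote it to an identity of group elements, the promotion being the delicate point. First I would fix $x \in \mathbb S^1$ and $T \in G$ and set $y = Tx$. Since $y$ and $x$ lie in the same $G$-orbit, orbit equivalence provides $p, q \ge 0$ with $g^p(x) = g^q(y) =: z$. Writing this out with the cocycle elements gives $\gamma_p[x]\, x = z = \gamma_q[y]\, y = \bigl(\gamma_q[Tx]\,T\bigr)\, x$, so that, setting $A := \gamma_p[x]$ and $B := \gamma_q[Tx]\,T$, the two elements $A, B \in G$ satisfy $Ax = Bx = z$. The desired conclusion $\gamma_p[x] = \gamma_q[Tx]\,T$ is precisely $A = B$.

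Next I would use the hypothesis that $g$ is given on left closed, right open intervals by elements of $G$ to realize $A$ and $B$ as the genuine local expressions of $g^p$ and of $g^q\circ T$ on a common right neighborhood of $x$. Because each branch of $g$ is an orientation preserving homeomorphism and the defining intervals are left closed, the set of points sharing the length-$p$ itinerary of $x$ is left closed and contains $x$; hence it contains some $[x,\delta)$, and on this interval $g^p$ coincides with the single element $A$. Applying the same remark to $y = Tx$ and composing with the orientation preserving homeomorphism $T$ shows that $g^q\circ T$ coincides with $B$ on a possibly smaller $[x,\delta')$. Consequently $S := B^{-1}A \in G$ fixes $x$, that is, $S$ lies in the stabilizer $\mathrm{Stab}_G(x)$.

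Now I would invoke the standard dichotomy for a Fuchsian, hence discrete, group: $\mathrm{Stab}_G(x)$ is either trivial or infinite cyclic. If it is trivial then $S = \mathrm{id}$ and $A = B$, which already settles strong orbit equivalence at every $x$ not fixed by a nontrivial element of $G$. The remaining case, where $x$ is a (in the cocompact setting, hyperbolic) fixed point of $G$, is the main obstacle: a single coincidence $Ax = Bx$ cannot by itself force $A = B$, and, since adding the same integer to both $p$ and $q$ leaves $S$ unchanged by the cocycle identity $\gamma_{p+m}[x] = \gamma_m[z]\,\gamma_p[x]$, one cannot kill $S$ by merely iterating in lock step. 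The plan here is to exploit the extra freedom coming from the fact that the two forward orbits have already merged at $z$: for such a fixed point $z$ is eventually periodic under $g$, say with first return element $C := \gamma_c[z]$ at $z$, so that $(p+c, q)$ is again a valid pair. Replacing $A$ by $\gamma_{p+c}[x] = C\,A$ multiplies $S$ by the conjugate $B^{-1}CB \in \mathrm{Stab}_G(x)$, while instead advancing $q$ by $c$ multiplies $S$ by $(B^{-1}CB)^{-1}$; using whichever move keeps both indices nonnegative, every integer power of $B^{-1}CB$ is reachable. It then suffices that $S$ lie in the cyclic group generated by $B^{-1}CB$, and choosing the appropriate number of returns yields a pair $(p',q')$ with $\gamma_{p'}[x] = \gamma_{q'}[Tx]\,T$.

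The hard part is exactly this last step, and it rests on two inputs that must be checked for the map at hand: (i) that the common orbit point $z$ is genuinely eventually periodic whenever $x$ is a fixed point of $G$, and (ii) that the conjugated first return element $B^{-1}CB$ generates $\mathrm{Stab}_G(x)$, i.e.\ that the period is primitive, so that $S$ necessarily lies in $\langle B^{-1}CB\rangle$. Away from the countable set of fixed points of $G$ the statement is immediate from the stabilizer being trivial, so this is the only case needing care. I note that if one were willing to assume $g$ expansive, as in Lemma~\ref{l:strongExpanGivesPerIffHyper}, then periodicity of $z$ together with the forced blow-up of separation would give a cleaner route to $S = \mathrm{id}$; the interest of the present lemma is to reach the same conclusion from orbit equivalence and the half-open piecewise structure alone.
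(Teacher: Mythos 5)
Your reduction to showing $A=B$ for $A=\gamma_p[x]$ and $B=\gamma_q[Tx]\,T$, and your observation that $A$ and $B$ are the genuine local expressions of $g^p$ and of $g^q\circ T$ on a common right neighbourhood $[x,\delta)$ (using the left-closed, right-open branch intervals and orientation preservation), agree with the paper's setup. The divergence comes at the decisive step. The paper does not argue through the stabilizer of the single point $x$: it asserts that $\gamma_p[x]$ and $\gamma_q[Tx]\,T$ agree on a whole half-open interval at $x$ --- that is, that the coincidence $g^p(\cdot)=g^q(T\cdot)$ holds not just at $x$ but for the points of $[x,\delta)$ with the same matching exponents --- and then concludes because only the identity element of a Fuchsian group can fix an interval. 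That argument is uniform in $x$ and needs no case distinction according to whether $x$ is a fixed point of $G$.

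Your proposal extracts only the single identity $Ax=Bx$ and therefore only reaches $S:=B^{-1}A\in\mathrm{Stab}_G(x)$. As you note, this finishes the proof whenever the stabilizer is trivial, but it leaves open exactly the points the paper actually needs: Lemma~\ref{l:strongHypFixHasFinOrb} applies strong orbit equivalence at hyperbolic fixed points of $G$, which are precisely the $x$ with infinite cyclic stabilizer. Your proposed repair there is not a proof. Input (ii) --- that $S$ must lie in $\langle B^{-1}CB\rangle$ --- is asserted without justification, and there is no evident reason for it: $\mathrm{Stab}_G(x)$ is generated by a primitive hyperbolic, and nothing forces the first-return element $C$ to be conjugate to that generator rather than to a proper power, nor forces $S$ into the subgroup it generates even if it were. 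Input (i), eventual periodicity of $z$ under $g$ when $x$ is a fixed point, is essentially the content of Lemma~\ref{l:strongHypFixHasFinOrb}, whose proof in the paper \emph{uses} strong orbit equivalence, so invoking it here risks circularity. The missing idea is the upgrade from agreement at the point $x$ to agreement on a one-sided interval at $x$; that upgrade is the entire content of the paper's proof, and it is where the work lies (one must check that the exponents $p,q$ furnished by orbit equivalence can be taken to serve simultaneously for all $x'$ in a right neighbourhood of $x$, not merely for $x$ itself).
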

\begin{proof}
  Let $x \in \mathbb S^1, T \in G$.  By orbit equivalence, there are $p,q \ge 0$ such that $g^p(x) = g^q(Tx)$.    Thus, $\gamma_p[x] x = \gamma_q[Tx]\, Tx$.  Since $g$ is piecewise defined,  we have that $\gamma_p[x] x' = \gamma_q[ Tx] y'$ for all $x', y'$ in sufficiently small  on left closed, right open intervals containing $x$ and $Tx$, respectively.  The action of $T$ is continuous and orientation preserving, hence  by possibly shrinking the interval containing $x$, we may assume that $T$ sends it to the interval containing $Tx$.  
That is, $\gamma_p[x]$ and $\gamma_q[ Tx] \,T$ agree on an interval.  Only the identity element of $G$ can fix any interval, and hence  $\gamma_p[x] = \gamma_q[ Tx] \,T$.
\end{proof}
 
 \begin{Lem}\label{l:strongHypFixHasFinOrb}    Suppose that a function $g$ is  given on intervals by elements of a Fuchsian group $G$ acting on $\mathbb S^1$, and strong orbit equivalence holds for $g, G$.   Then every hyperbolic fixed point of $G$ has finite $g$-orbit.
\end{Lem}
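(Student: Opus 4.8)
The plan is to use strong orbit equivalence directly at the fixed point, exploiting that a hyperbolic fixed point $x$ of $G$ is, by definition, fixed by some nontrivial hyperbolic element $T \in G$, so that $Tx = x$. The whole argument consists of feeding this relation $Tx = x$ into the strong orbit-equivalence identity and reading off an eventual periodicity.

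First I would apply strong orbit equivalence to the pair $(x,T)$: there exist $p,q \ge 0$ with $\gamma_p[x] = \gamma_q[Tx]\, T$ in $G$. Because $Tx = x$, the right-hand subscript collapses and this becomes $\gamma_p[x] = \gamma_q[x]\, T$. Next I would evaluate both sides at the point $x$. The left side yields $\gamma_p[x]\, x = g^p(x)$, while the right side yields $\gamma_q[x]\, T x = \gamma_q[x]\, x = g^q(x)$, using $Tx = x$ once more. Hence $g^p(x) = g^q(x)$.

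It then remains to see that $p \neq q$, and that this forces a finite orbit. If $p = q$ then $\gamma_p[x] = \gamma_q[x]$, and the relation $\gamma_p[x] = \gamma_q[x]\, T$ forces $T = \mathrm{id}$, contradicting that $T$ is hyperbolic hence nontrivial. So $p \neq q$; taking $p < q$, the equality $g^p(x) = g^q(x) = g^{q-p}\big(g^p(x)\big)$ exhibits $g^p(x)$ as a $g$-periodic point, whence the forward orbit $\{\,g^n(x)\,\}_{n\ge 0}$ consists of at most $q$ distinct points and is finite.

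The only genuine subtlety, and the step I would flag as the crux, is ruling out the degenerate case $p = q$: plain orbit equivalence by itself would give only the existence of $p,q \ge 0$ with $g^p(x) = g^q(x)$, which is vacuously satisfied by $p = q = 0$ and conveys no information. It is precisely the bookkeeping of the group elements $\gamma_p[x]$ supplied by strong orbit equivalence, combined with the nontriviality of the hyperbolic element $T$, that excludes $p = q$ and thereby upgrades the trivial coincidence into a true periodicity. Everything else is a direct substitution.
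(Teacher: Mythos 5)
Your argument is correct and is essentially the paper's own proof: apply strong orbit equivalence to the pair $(x,T)$, use $Tx=x$ to get $\gamma_p[x]=\gamma_q[x]\,T$, deduce $p\neq q$ from the nontriviality of $T$, and evaluate at $x$ to obtain $g^p(x)=g^q(x)$. Your explicit justification that $p=q$ would force $T=\mathrm{id}$ is exactly the reasoning the paper leaves implicit in the phrase ``Since $T$ is not the identity, we have $p\neq q$.''
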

\begin{proof} Let $x$ be a fixed point of the hyperbolic element $T \in \Gamma$. There are  $p,q \ge 0$ such that  $\gamma_p[x] = \gamma_q[\, T x] \,T =  \gamma_q[\,  x] \,T$.   Since $T$ is not the identity, we have $p\neq q$.  Now, $g^p(x) = \gamma_p[x] x = \gamma_q[\,   x] \,Tx   = \gamma_q[\,  x] \, x = g^q(x)$.  It follows that $x$ has a finite $g$-orbit.
\end{proof}

\section{The extension property: failure and success; proof of Theorem~\ref{t:whichTrianglesWork} }  
\begin{proof}[Proof of Theorem~\ref{t:whichTrianglesWork}.]   By [\,\cite{beardon}, Theorem~10.5.1],  a convex fundamental domain $\mathcal{F}$ for a  triangle group $\Gamma$ is either a quadrilateral or a hexagon.  In the latter case, there must be an accidental cycle of length three.   As Pit \cite{pit} observed, such a cycle always causes a failure of the extension property; to see this,   let  $u_1,u_2$ and $u_3$ be the vertices of this accidental cycle and $\theta_1,\theta_2$ and $\theta_3$  their corresponding  internal angles. There exist $T_1,T_2\in\Gamma$ such that $T_1(u_1)=u_3$ and $T_2(u_2)=u_3$; hence $u_3\in \mathcal{F}\cap T_1(\mathcal{F})\cap T_2(\mathcal{F})$.  But,   $\theta_1+\theta_2+\theta_3=2\pi$ and for each $i$, $\theta_i<\pi$.   It follows that the geodesic that contains the side in the intersection of $T_1(\mathcal{F})$ and $T_2(\mathcal{F})$ meets the interior of $\mathcal{F}$.   Clearly, the extension property fails.

 If $\mathcal F$ is a quadrilateral, since the quotient by $\Gamma$ is of genus zero it must be that each of two opposite vertices  has its adjacent sides identified by some element of $\Gamma$.    That is,  we can label side pairing elements $T_1, \dots, T_4$ as above in the Bowen-Series setting.    By  Poincar\'e's fundamental polygon theorem, see say \cite{beardon}, the hyperbolic triangle group $\Gamma$ has the presentation $\langle a, b \mid a^{m_1} = b^{m_2} = (ab)^{m_3}\rangle$, where $a, b$ correspond to the elements $T_2, T_4$, respectively.   If now (at least) two of the $m_i$ are odd, then we can assume that $T:= T_2$ acts as a rotation about its fixed vertex, say $v$,  of angle $2\pi/m$ with $m$ an odd integer.   
 
The fact that $m$ is odd is exactly what causes the extension property to fail, and indeed reveals what motivates one in general to call this property that of `even corners'!    That all said, we give further details.   
 
 Let $s$ be the side paired to $T(s)$ by $T$,   and $s'$ the geodesic arc extending $s$ beyond $v$.  
Let $U = T^{\frac{m+1}{2}}$.   We find that $U$ sends $s'$ to the geodesic arc agreeing with a rotation by angle $2 \pi + \pi/m$ of $s$.  This arc meets the interior of $\mathcal F$; indeed, it lies on the angle bisector of the sides meeting at $v$.  Again, the extension property fails.

\begin{figure}[h]
\scalebox{.3}{
\includegraphics{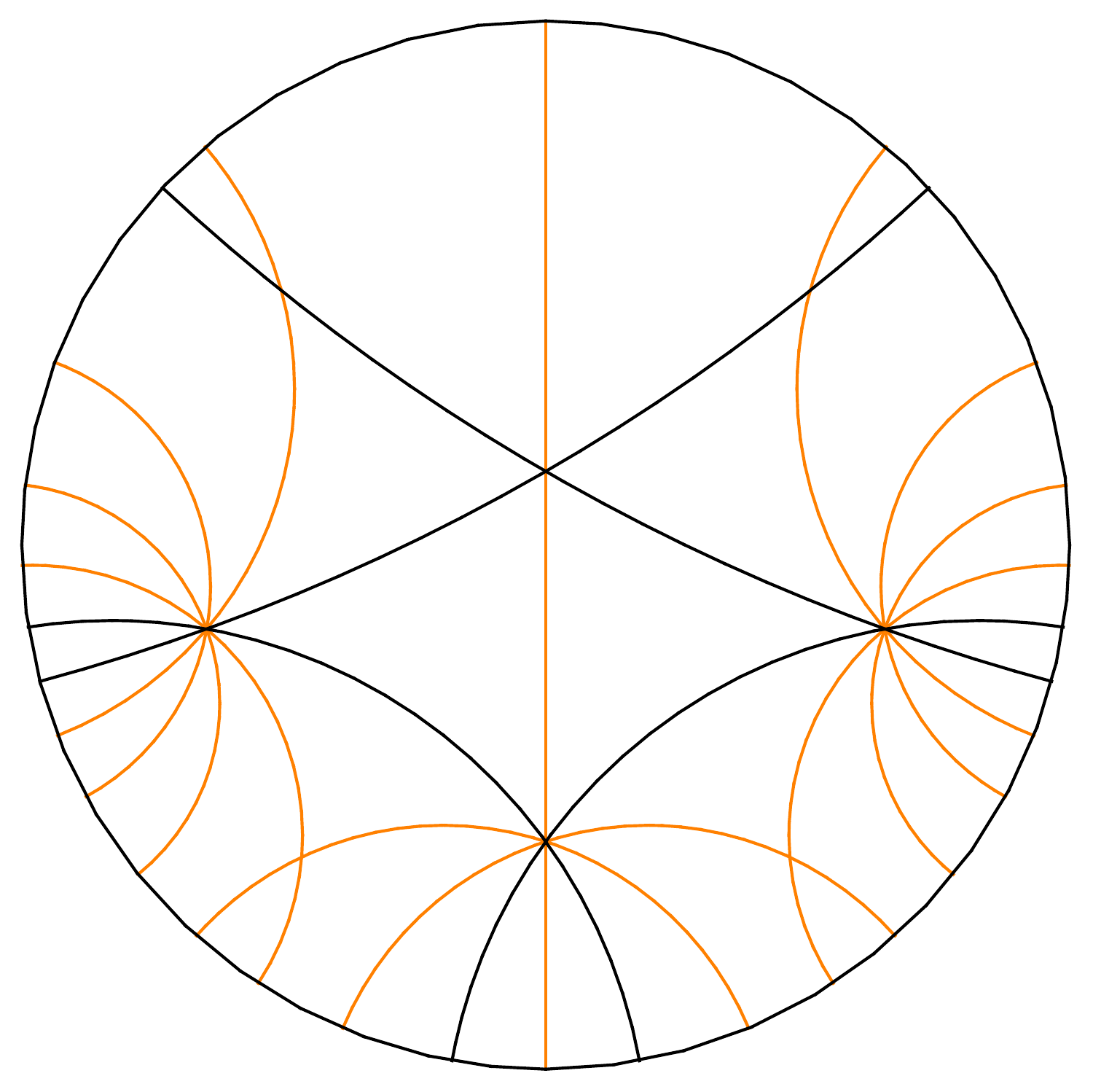} 
}
\caption{The Bowen-Series fundamental domain for  $(m_1, m_2, m_3) =   (3, 5, 6)$ fails to have the extension property.}%
\label{f:extFails}%
\end{figure}

 We now turn to the Bowen-Series fundamental domain $\mathscr F$, although we  relabel as necessary so that $m_1, m_2$ are both even.  
 We show that the sides of $\mathscr F$ do appropriately project to closed geodesics, and thus the result will follow from Lemma~\ref{l:projToGeos}. First suppose that also $m_3$ is even.   Then each of the four vertices is fixed by an elliptic element of even order,  and taking appropriate powers, each is fixed by an elliptic element of order two.    Hence, any side  lies on the axis of the hyperbolic element formed by the product of elliptic elements of order two  corresponding to its endpoints.  In particular the side itself projects to a closed geodesic (on the orbifold, the closed geodesic is given by traversing the projected path twice, once in each direction).   Thus, the extension property holds in this case. 

If $m_3 = 2k + 1$ is odd, then we claim that each  union of opposite sides of $\mathscr F$ projects to a closed geodesic.    For ease, we begin with $r_4$ as emanating from $v_4$.   Following the unit tangent vectors along this side until $v_1$,  we then apply $(T_2T_4)^k$ thus rotating the unit tangent vector by $k\cdot 2 \pi/m_3 = 2k \pi /(2k+1) = \pi -\pi/(2k+1)$.   An application of $T_4$ now sends this to the unit tangent vector based at $v_2$ and pointing in the negative direction of $r_2$.   Following this side until reaching $v_2$, we apply the appropriate power of $T_2$ so as to turn the unit tangent vector through an angle of $\pi$.   We now retrace the steps all the way back to $v_4$, where the arriving unit tangent vector has  the opposite direction from that of the initial departing vector; we thus apply the appropriate power of $T_4$ so as to turn the unit tangent vector through an angle of $\pi$.    In summary,   we find that the union of $r_4$ and $-r_2 = \ell_3$ projects to the closed geodesic that is the projection of the axis of the hyperbolic element that this the product of $[T_{4} (T_2T_4)^{k}]^{-1}\, T_{2}^{m_2/2} \,[T_{4} (T_2T_4)^{k}]$ with $T_{4}^{m_1/2}$.  By symmetry, a similar result holds for the remaining two sides, and we find that the extension property holds also in this case. 
\end{proof}

 \begin{Rmk}\label{l:otherGenusZero}  The proof above shows that the fundamental domain constructed  in \cite{bs}    in the setting of genus $g=0$ and $n\ge 3$ singularities fails to have the extension property whenever at most one of the indices is even.  
\end{Rmk}

 Naturally enough, in what follows we will restrict to the setting where the extension property holds.  In order to also ensure that eventual expansivity holds, we further insist that  $\mathcal F$ is not a degenerate quadrilateral.  (That is, there are no vertices of angle $\pi$.) 
 
\begin{Def}\label{d:WithExtensionProperty}  Let $\mathscr E \subset  \{(m_1, m_2, m_3) \in \mathbb N^3\}$ such that 
\begin{enumerate}
\item $\sum_{i=1}^3 m_{i}^{-1} < 1$;
\item $2 \mid m_1, 2 \mid m_2$;
\item $2 \in \{m_1, m_2, m_3\} \Rightarrow m_3 = 2$.
\end{enumerate}
\end{Def}

\section{Continuous deformations of the Bowen-Series map} 
\subsection{Definition of function and partition}\label{ss:defFunPart}  
 Fix a signature in $\mathscr E$ and its  Bowen-Series function $f$, defined in \eqref{e:defF}.
For each $\alpha \in \mathscr O$, 
we define 
 \begin{equation}\label{e:ThatsOurFunction}
 f_{\alpha}(x) :=\begin{cases}
T_{i-1}(x),& x\in [a_i^{n_i},\alpha);\\
f(x),& \text{otherwise},\\
\end{cases}
 \end{equation} 
where $\alpha \in \mathscr O_i$.
Note that each $f_{\alpha}$ is expansive, since  $f$ itself is expansive and   $[a_i^{n_i},\alpha)$ lies within the isometric circle of $T_{i-1}$.    We call  $\mathscr D = \mathscr D_\alpha= [a_i^{n_i},\alpha)$ the {\em differing interval}; indeed, the functions $f, f_{\alpha}$ differ exactly on $\mathscr D$.   See Figure ~\ref{f:plots344NonSurjective} for plots of one pair $f, f_{\alpha}$.

\begin{figure}[h]
\scalebox{1.2}{
\includegraphics{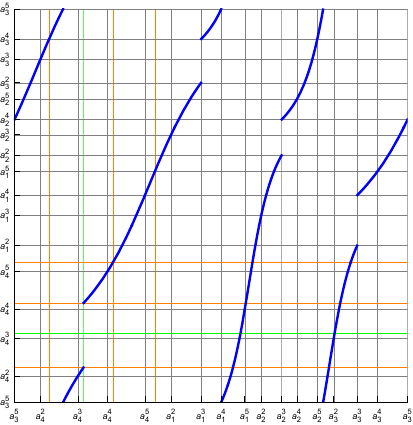} 
}
\caption{Plot representing the  function $f_{\alpha}$ for signature $(6,6,3)$  with $\alpha$ the fixed point of $T_{4}^{2}T_{2}^{2}T_{3}T_{1}T_{4}^{2}T_{1}T_{4}^{3}$.  Here, $\alpha \in M_1\cap \mathscr O_4$, and is marked by the light green lines.  Light gray lines correspond to points in $W$, light orange lines to the remaining points in $W_\alpha$.%
}%
\label{f:plot366Alp2}%
\end{figure}

   Let 
 \[ W_{\alpha} = W \cup \{ f_{\alpha}^k(\alpha)\}_{k\ge 0} \cup \{ f_{\alpha}^k(T_{i-1} \alpha)\}_{k\ge 0}.\]
 Since $W$ is $f$-invariant  and $f,f_{\alpha}$ agree on $W$ except at $a_i^{n_i}$, the following implies the $f_{\alpha}$-invariance of $W_{\alpha}$.  We have  $f_{\alpha}(a_i^{n_i})=T_{i-1}(a_i^{n_i})$,  by \eqref{wpts} this equals $a_{\rho(i)}^{n_i-1}$ and thus belongs to $W$. 
 Let $\mathcal P_\alpha$ be the partition of $\mathbb S^1$ given by the left-closed, right-open  intervals whose endpoints are the elements of $W_{\alpha}$.   See Figure~\ref{f:plot366Alp2} for an indication of one $\mathcal P_{\alpha}$.

 \begin{Rmk}\label{l:moreFun}  Note that one could take the four-dimensional family defined by choosing a parameter in each $\mathscr O_i$ and simultaneously  making changes as above to define a new function.   We intend to return to this setting in upcoming work. 
\end{Rmk}

\subsection{More on $f$-dynamics} \phantom{hi} 

  Our study of the various $f_{\alpha}$ requires various details about the dynamics of $f$ itself.

\begin{Lem}\label{l:leftRightFan}  For each $i$,  the image under $f$ of  $\mathcal A_i$ is the interval $R_1(v_{\rho(i)})\setminus R_{n_{\rho(i)-1}}(v_{\rho(i)-1})$.   The image of $L_1(v_i)$ under $f$ is given by $f(\mathcal A_i)\cup L_{n_{i+1}-1}(v_{\rho(i+1)})$.   
\end{Lem}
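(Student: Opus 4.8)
The plan is to reduce both assertions to two clean computations of $T_i$-images, organized around the decomposition $L_1(v_i) = \mathcal A_i \sqcup \mathscr O_{i+1}$ recorded just after \eqref{e:ohEye}; here $\mathscr O_{i+1} = L_{n_{i+1}}(v_{i+1})$ is the rightmost subinterval of $L_1(v_i)$ by Lemma~\ref{l:keyContainers}. Since $f = T_i$ on $\mathcal A_i$ while $f = T_{i+1}$ on $\mathscr O_{i+1}$ (the latter from \eqref{e:partition} applied to $L_{n_{i+1}}(v_{i+1})$, using $n_{i+1}\ge 2$ in the $\mathscr E$ setting), the entire argument comes down to identifying $T_i(L_1(v_i))$ and $T_i(\mathscr O_{i+1})$ as named partition intervals and then subtracting.

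First I would compute $T_i(L_1(v_i))$. Writing $L_1(v_i) = [a_i^{2n_i-1}, a_i^{2n_i})$ from Definition~\ref{LRpartition} and applying \eqref{wpts}, the endpoints map to $T_i(a_i^{2n_i-1}) = a_{\rho(i)}^{2n_i} = a_{\rho(i)}^{0}$ and $T_i(a_i^{2n_i}) = a_{\rho(i)}^{1}$, reading superscripts modulo $2n_i$ and using $n_{\rho(i)}=n_i$ (which holds because $\rho$ fixes the even indices and interchanges $1\leftrightarrow 3$, while $n_1=n_3$). As $T_i$ is an orientation-preserving M\"obius map and the arc is short, this yields $T_i(L_1(v_i)) = [a_{\rho(i)}^{0}, a_{\rho(i)}^{1}) = R_1(v_{\rho(i)})$.

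Next I would compute $T_i(\mathscr O_{i+1})$. The key observation is that although the left endpoint of $\mathscr O_{i+1}$ is a point of $N_{i+1}$ rather than of $N_i$, the map $T_i$ equals $T_{(i+1)-1}$ and so acts on the $N_{i+1}$-endpoints of $\mathscr O_{i+1}$ directly through \eqref{wpts}; this is precisely the first containment extracted in \eqref{e:rightLandsInR}. Carrying it out at its first step gives the identity $T_{j-1}(\mathscr O_j) = R_{n_j}(v_{\rho(j)})$ for every $j$, and specializing to $j=i+1$ produces $T_i(\mathscr O_{i+1}) = R_{n_{i+1}}(v_{\rho(i+1)})$. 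Combining with the previous step and injectivity of $T_i$ gives $f(\mathcal A_i) = T_i(\mathcal A_i) = R_1(v_{\rho(i)}) \setminus R_{n_{i+1}}(v_{\rho(i+1)})$, and the first assertion follows after rewriting the removed interval via the index identities $\rho(i+1)=\rho(i)-1$ (checked from \eqref{e:rho} according to the parity of $i$) and $n_{i+1}=n_{\rho(i+1)}=n_{\rho(i)-1}$. For the second assertion I would simply apply $f$ to the decomposition, $f(L_1(v_i)) = f(\mathcal A_i)\cup f(\mathscr O_{i+1})$, with $f(\mathscr O_{i+1}) = f(L_{n_{i+1}}(v_{i+1})) = L_{n_{i+1}-1}(v_{\rho(i+1)})$ by \eqref{e:partition}.

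I expect the only real friction to be index bookkeeping rather than any geometric difficulty: keeping the endpoints straight under the modular convention of \eqref{wpts}, and confirming the two labeling identities $\rho(i+1)=\rho(i)-1$ and $n_{\rho(j)}=n_j$ needed to present the cut-out piece in the exact form $R_{n_{\rho(i)-1}}(v_{\rho(i)-1})$ of the statement. It is worth flagging at the outset that $f$ acts on $L_1(v_i)$ through two different branches, so no single line of \eqref{e:partition} applies to $L_1(v_i)$ as a whole; this is exactly why the splitting into $\mathcal A_i$ and $\mathscr O_{i+1}$ is the natural organizing device.
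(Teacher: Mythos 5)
Your proof is correct and takes essentially the same route as the paper's: both compute $T_i(L_1(v_i)) = R_1(v_{\rho(i)})$ and $T_i(L_{n_{i+1}}(v_{i+1})) = R_{n_{i+1}}(v_{\rho(i+1)})$ from \eqref{wpts}, invoke the index identities $n_{\rho(i)} = n_i$ and $\rho(i+1) = \rho(i)-1$, and deduce the second assertion from \eqref{e:partition}. Your write-up simply makes the endpoint bookkeeping more explicit than the paper does.
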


\begin{proof}   The restriction of $f(x)$ to $\mathcal A_i = L_1(v_i)\setminus L_{n_{i+1}}(v_{i+1})$ is given by the action of $T_i$, see \eqref{e:defAsubI}.  From \eqref{wpts}, $T_i$ sends $ L_1(v_i)$ to $R_1(v_{\rho(i)})$ and also $L_{n_{i+1}}(v_{i+1})$ to $R_{n_{i+1}}(v_{\rho(i+1)})$.   Since for all $i$ both $n_i = n_{\rho(i)}$ and $\rho(i+1) = \rho(i)-1$ hold, the first result holds.  The second follows from \eqref{e:partition}.
\end{proof}
 
The following  lemma is a  direct implication of Lemma~\ref{l:classificationOKandOrbit1} and its proof.  
\begin{Lem}\label{l:babySteps} Fix $i$.   Then for any $x\in  L_{n_i}(v_i)$,   we have
 \[
 \begin{cases} \;f^j(x)\;\;\;\;\;\; \in L_{n_i - j}(v_{\rho^j(i)}), & 0\le j \le n_i-1,\\
 \\
                        f^j(\, T_{i-1} x) \in R_{n_i - j}(v_{\rho^{1+j}(i)}), & 0\le j  \le n_i-1.                      
 \end{cases}
 \]
\end{Lem}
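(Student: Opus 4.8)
The plan is to prove both statements by a straightforward induction on $j$, feeding the single-step transition rules of \eqref{e:partition} into each other; indeed, the two composition formulas \eqref{e:babystepsLeftOrbit} and \eqref{e:rightLandsInR} already established in the proof of Lemma~\ref{l:classificationOKandOrbit1} encode exactly the required intermediate images, so the present lemma amounts to reading off those intermediate stages and verifying that the index bookkeeping stays within the admissible ranges.

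For the left orbit, I would induct on $j$. The base case $j=0$ is the hypothesis $x \in L_{n_i}(v_i)$, since $\rho^0(i)=i$ and $n_i-0=n_i$. For the inductive step, suppose $f^j(x)\in L_{n_i-j}(v_{\rho^j(i)})$ with $0\le j\le n_i-2$. Applying the top line of \eqref{e:partition} to the index $\rho^j(i)$ with $r=n_i-j$ gives $f\bigl(L_{n_i-j}(v_{\rho^j(i)})\bigr)=L_{n_i-j-1}(v_{\rho^{j+1}(i)})$, whence $f^{j+1}(x)\in L_{n_i-(j+1)}(v_{\rho^{j+1}(i)})$, completing the step. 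For the right orbit I would induct similarly: the base case $j=0$ is the containment $T_{i-1}(x)\in R_{n_i}(v_{\rho(i)})$ already noted in the proof of Lemma~\ref{l:classificationOKandOrbit1} (obtained from \eqref{wpts}), and the inductive step uses the bottom line of \eqref{e:partition} applied to the index $\rho^{1+j}(i)$ with $s=n_i-j$, yielding $f^{j+1}(T_{i-1}x)\in R_{n_i-(j+1)}(v_{\rho^{2+j}(i)})$, which is the desired interval since $2+j=1+(j+1)$.

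The one point requiring care is confirming that every invocation of \eqref{e:partition} is legitimate, i.e.\ that the subscript $r$ (respectively $s$) satisfies $2\le r\le n_{\rho^j(i)}$. Since $0\le j\le n_i-2$ forces $2\le n_i-j$, the lower bound is immediate; for the upper bound I would use the identity $n_{\rho(i)}=n_i$ for all $i$ (established in the proof of Lemma~\ref{l:leftRightFan}), which by iteration gives $n_{\rho^j(i)}=n_i$ and hence $n_i-j\le n_i=n_{\rho^j(i)}$. This identity is the main, and essentially only, obstacle; it guarantees that each baby step never runs off the end of the left- or right-fan, so that \eqref{e:partition} continues to apply throughout the orbit segment $0\le j\le n_i-1$. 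With this in hand both inductions close uniformly and the lemma follows.
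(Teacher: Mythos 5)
Your proposal is correct and takes essentially the same route as the paper, which gives no separate argument but declares the lemma ``a direct implication of Lemma~\ref{l:classificationOKandOrbit1} and its proof'' --- i.e.\ of the iterated applications of \eqref{e:partition} recorded in \eqref{e:babystepsLeftOrbit} and \eqref{e:rightLandsInR}, whose intermediate stages are exactly what your induction reads off. Your explicit check that $n_{\rho^j(i)}=n_i$ keeps the subscripts in the admissible range of \eqref{e:partition} is a worthwhile detail the paper leaves tacit.
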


\begin{Lem}\label{l:yAtEndOfGiantStep} Fix $i$.   Then for any $x\in  L_{n_i}(v_i)$, setting $y= T_{i-1}(x)$ we have
\[
    f^{n_i-1}(\, y\,)  \in   \begin{cases} R_1(v_{\rho^{n_i}(i)}) \setminus R_{n_{\rho(\theta(i)+1)}}(v_{\rho(\theta(i)+1)})&\text{iff}\;f^{n_i-1}(x)\in   \mathcal A_{\theta(i)};\\
\\
R_{n_{\rho(\theta(i)+1)}}(v_{\rho(\theta(i)+1)})&\text{iff}\;f^{n_i-1}(x)\in L_{n_{\theta(i)+1}}(v_{\theta(i)+1}).
                   \end{cases}
\]
\end{Lem}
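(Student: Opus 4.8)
The plan is to leverage the homeomorphism and the conjugacy relation \eqref{orbit1} of Lemma~\ref{l:classificationOKandOrbit1} to transfer an already-known decomposition of $L_1(v_{\theta(i)})$ across the single map $T_{\theta(i)}$. Writing $y = T_{i-1}(x)$, Lemma~\ref{l:classificationOKandOrbit1} supplies two facts I will use: first, $f^{n_i-1}$ carries $\mathscr O_i = L_{n_i}(v_i)$ homeomorphically onto $L_1(v_{\theta(i)})$, so as $x$ ranges over $L_{n_i}(v_i)$ the point $f^{n_i-1}(x)$ ranges over exactly $L_1(v_{\theta(i)})$; and second, $f^{n_i-1}(y) = T_{\theta(i)}\, f^{n_i-1}(x)$. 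By \eqref{e:defAsubI} and \eqref{e:ohEye} the target splits as the disjoint union $L_1(v_{\theta(i)}) = \mathcal A_{\theta(i)} \sqcup \mathscr O_{\theta(i)+1}$, with $\mathscr O_{\theta(i)+1} = L_{n_{\theta(i)+1}}(v_{\theta(i)+1})$; these are precisely the two alternatives on the right-hand side of the claimed equivalences. So the lemma reduces to computing the images of these two pieces under the bijection $T_{\theta(i)}$ and matching them with the two intervals in the statement.

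The two computations are bookkeeping with \eqref{wpts}. For the $\mathscr O$-piece I apply $T_{\theta(i)}$ to the endpoints of $\mathscr O_{\theta(i)+1} = [a_{\theta(i)+1}^{n_{\theta(i)+1}}, a_{\theta(i)+1}^{n_{\theta(i)+1}+1})$; taking $i \mapsto \theta(i)+1$ in the first formula of \eqref{wpts} gives $T_{\theta(i)}(a_{\theta(i)+1}^{m}) = a_{\rho(\theta(i)+1)}^{m-1}$, so the image is $[a_{\rho(\theta(i)+1)}^{n_{\theta(i)+1}-1}, a_{\rho(\theta(i)+1)}^{n_{\theta(i)+1)}}) = R_{n_{\theta(i)+1}}(v_{\rho(\theta(i)+1)})$, which equals $R_{n_{\rho(\theta(i)+1)}}(v_{\rho(\theta(i)+1)})$ since $n_{\theta(i)+1} = n_{\rho(\theta(i)+1)}$; this is exactly the interval of the second case. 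In parallel, applying $T_{\theta(i)}$ to the endpoints of $L_1(v_{\theta(i)})$ via the second formula of \eqref{wpts} (with the conventions $a^{0}=a^{2n}$, $a^{2n+1}=a^{1}$ and $n_{\theta(i)} = n_{\rho(\theta(i))}$) yields $T_{\theta(i)}(L_1(v_{\theta(i)})) = R_1(v_{\rho(\theta(i))}) = R_1(v_{\rho^{n_i}(i)})$, the ambient interval of the first case; this containment alternatively follows directly from Lemma~\ref{l:babySteps} evaluated at $j = n_i-1$.

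To finish I combine these. As an orientation-preserving M\"obius transformation, $T_{\theta(i)}$ is injective on $L_1(v_{\theta(i)})$ and thus respects the disjoint decomposition, so $T_{\theta(i)}(\mathcal A_{\theta(i)}) = T_{\theta(i)}(L_1(v_{\theta(i)})) \setminus T_{\theta(i)}(\mathscr O_{\theta(i)+1}) = R_1(v_{\rho^{n_i}(i)}) \setminus R_{n_{\rho(\theta(i)+1)}}(v_{\rho(\theta(i)+1)})$. Using $f^{n_i-1}(y) = T_{\theta(i)}\, f^{n_i-1}(x)$ and the injectivity of $T_{\theta(i)}$ on $L_1(v_{\theta(i)})$, membership of $f^{n_i-1}(x)$ in $\mathcal A_{\theta(i)}$, respectively in $\mathscr O_{\theta(i)+1} = L_{n_{\theta(i)+1}}(v_{\theta(i)+1})$, is equivalent to membership of $f^{n_i-1}(y)$ in the corresponding image interval, which is exactly the asserted pair of biconditionals.

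I do not anticipate a serious obstacle: the only real hazard is index-discipline. The point requiring most care is that the two right-hand intervals are named using \emph{different} base vertices, namely $v_{\rho^{n_i}(i)} = v_{\rho(\theta(i))}$ versus $v_{\rho(\theta(i)+1)}$, so one must be sure the set difference is meaningful, i.e.\ that $R_{n_{\rho(\theta(i)+1)}}(v_{\rho(\theta(i)+1)}) \subset R_1(v_{\rho^{n_i}(i)})$. This nesting is not to be verified by separate hand computation but is automatic: it is the $T_{\theta(i)}$-image of the genuine nesting $\mathscr O_{\theta(i)+1} \subset L_1(v_{\theta(i)})$ coming from \eqref{e:defAsubI} (and visible via Lemma~\ref{l:keyContainers}). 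Keeping the reductions $\theta(i) = \rho^{n_i-1}(i)$, $\rho(\theta(i)) = \rho^{n_i}(i)$ and $n_k = n_{\rho(k)}$ in view throughout removes the remaining ambiguity.
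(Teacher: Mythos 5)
Your proof is correct and follows essentially the same route as the paper: the key step in both is the conjugacy $f^{n_i-1}(y)=T_{\theta(i)}f^{n_i-1}(x)$ from Lemma~\ref{l:classificationOKandOrbit1}, combined with identifying the $T_{\theta(i)}$-images of $\mathcal A_{\theta(i)}$ and $\mathscr O_{\theta(i)+1}$ inside $R_1(v_{\rho^{n_i}(i)})$. The only difference is that the paper obtains those images by citing Lemma~\ref{l:leftRightFan}, whereas you recompute them directly from \eqref{wpts}, which is exactly how that lemma is proved.
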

\begin{proof}   From Lemma~\ref{l:classificationOKandOrbit1}, we have  $f^{n_i-1}(y)=T_{\theta(i)}f^{n_i-1}(x)$.  The result thus follows from Lemma~\ref{l:leftRightFan}.
\end{proof}

The parity of  $m_3$ determines the period length of the sequence of indices $(i_k)_{k\ge 0}$.  Recall that for $k>0$ we have $i_k = \theta(i_{k-1}) +1$ where $\theta(i)=\rho^{n_i-1}(i)$ for each $i$.
 \begin{Lem}\label{l:periodOfIsubK}  Fix any $i_0 \in \{1,2,3,4\}$.   The sequence $(i_k)_{k\ge 0}$ is purely periodic.  Furthermore, the period length is 2 if $m_3$ is even and 4 otherwise.   Taken in order, the period alternates between even and odd values.  
 \end{Lem}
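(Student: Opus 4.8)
The plan is to reduce the entire statement to an explicit description of the self-map $S\colon i \mapsto \theta(i)+1$ of $\{1,2,3,4\}$, since \eqref{e:indexSequence} gives $i_k = S^k(i_0)$. First I would record the permutation $\rho$ of \eqref{e:rho}: a direct computation from $\sigma$ shows $\rho$ is the transposition exchanging $1$ and $3$ while fixing $2$ and $4$; in particular $\rho$ preserves the parity of its argument and $\rho^2 = \mathrm{id}$. Then, using $n_1 = n_3 = m_3$ together with $\rho(2)=2$ and $\rho(4)=4$ (so that the value of the exponent is irrelevant at the even indices), I would evaluate $\theta(i) = \rho^{n_i-1}(i)$; this is precisely \eqref{e:ThetaValues}, and it yields $\theta = \mathrm{id}$ when $m_3$ is odd and $\theta = \rho$ when $m_3$ is even. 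The point I want to extract is that in either case $\theta$ is a \emph{parity-preserving bijection} of $\{1,2,3,4\}$.

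With this in hand, pure periodicity is immediate: $S$ is the composition of the bijection $\theta$ with the cyclic shift $i \mapsto i+1 \pmod 4$, hence a permutation of the finite set $\{1,2,3,4\}$, so every orbit $(i_k)_{k\ge 0} = (S^k(i_0))_{k\ge 0}$ is purely periodic. The parity alternation follows just as quickly: since $\theta$ preserves parity, $i_{k+1} = \theta(i_k)+1$ has parity opposite to that of $i_k$, so consecutive terms alternate between even and odd; in particular $i_1 \neq i_0$ and no orbit can have odd period.

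It remains to pin down the period length, which I would do by computing $S$ directly in each case. When $m_3$ is even one finds $S = (1\,4)(2\,3)$, so $S^2 = \mathrm{id}$, and since the alternation already forces $i_1 \neq i_0$, every orbit has period exactly $2$. When $m_3$ is odd, $\theta = \mathrm{id}$ makes $S$ the $4$-cycle $(1\,2\,3\,4)$, whose orbits all have period $4$. The argument is essentially bookkeeping; the only step needing a little care is the verification that $\theta$, although defined through the index-dependent exponent $n_i-1$, is nonetheless a genuine permutation of $\{1,2,3,4\}$. This is what underwrites both pure periodicity and the clean period count, and it rests on the coincidence $n_1 = n_3$ combined with $\rho$ fixing the even indices.
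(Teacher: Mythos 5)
Your proof is correct and follows essentially the same route as the paper: both compute the transition $i_k \mapsto \theta(i_k)+1$ explicitly from \eqref{e:ThetaValues} together with $n_1=n_3=m_3$ and read off the period, the paper presenting the result as the automaton of Figure~\ref{f:seqIsPeriodic} where you present it as the cycle decomposition $(1\,4)(2\,3)$ versus $(1\,2\,3\,4)$. Your explicit remark that $S$ is a permutation (so orbits are \emph{purely} periodic, which hinges on $n_1=n_3$) makes cleanly visible a point the paper leaves to the figure and its caption, but it is the same argument.
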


\begin{proof}
From \eqref{e:ThetaValues}, we have 
\begin{equation}\label{e:nextI}
i_{k+1} =  \begin{cases}  i_k+1&\text{ if } 2 \mid i_k;\\
                                        i_k+1&\text{ if }  2 \nmid i_k, \, 2 \nmid n_{i_k}\,;\\
                                        i_k-1&\text{ if }  2 \nmid i_k, \, 2 \mid n_{i_k}\,.
                    \end{cases}
\end{equation}      
Therefore, the sequence $(i_k)_{k\ge 0}$ is always periodic; see Figure~\ref{f:seqIsPeriodic}.  Since  $m_3 =n_1 = n_3$, we find that the period length is 2 when $2\mid m_3$ and 4 otherwise.    The final statement follows from the fact that   $i_{k+1}$ always has the opposite parity of $i_k$.
\end{proof}
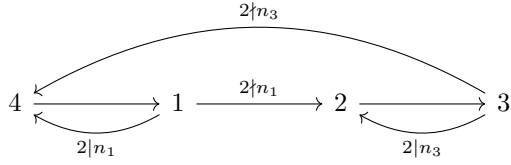
\begin{figure} 
\begin{tikzcd}[column sep=4pc,row sep=2pc]
4  \arrow{r}&1 \arrow[bend left]{l}{2\vert n_1}\arrow{r}{2\nmid n_1}&2\arrow{r}&3\arrow[bend left]{l}{2\vert n_3}\arrow[bend right]{lll}[swap]{2\nmid n_3}
\end{tikzcd}
\caption{An automaton which produces the sequence of the $i_k$, and shows that this sequence is always periodic.  Since $n_1 = n_3= m_3$, the sequence is  always purely periodic, of period length either $2$ or $4$.}
\label{f:seqIsPeriodic}%
\end{figure}

\section{Aperiodicity; the proof of Theorem~\ref{t:aperiodic}} 

 Recall, say from \cite{af},  that a function $g$ is called {\em aperiodic}  with respect to a partition of its domain of definition if there is a finite compositional power of the function that maps the closure of each partition element onto the whole domain.   Each  Bowen-Series function $f$ is shown in \cite{bs} to satisfy $\bigcup_{r=0}^\infty f^r(I)=\mathbb S^1$ for each $I$ of  $\mathcal{P}$.  Aperiodicity itself  follows from the finiteness of $\mathcal P$ and the fact that $f$ is eventually expansive and nowhere contractive.

\begin{figure}[h]
 \centering
\begin{minipage}{.5\textwidth}
  \centering
\scalebox{0.9}{
\includegraphics{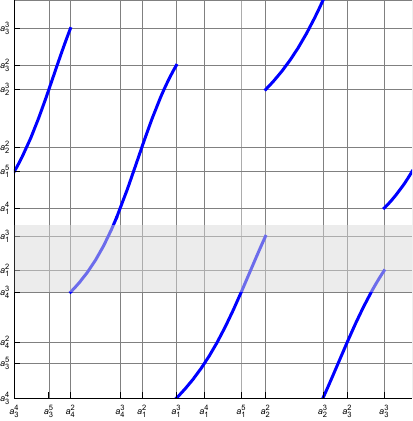}
}
\end{minipage}%
\begin{minipage}{.5\textwidth}
  \centering
  \scalebox{0.9}{
\includegraphics{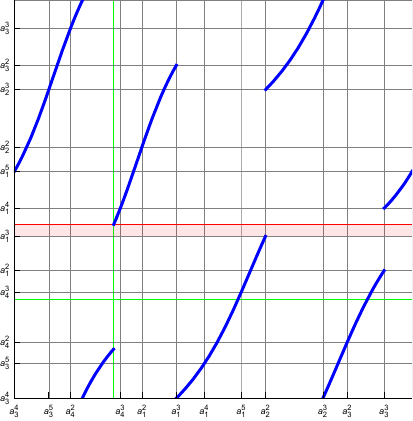}
}
\end{minipage}%
\caption{Plots of $f$ and of $f_{\alpha}$ for the signature $(4,4,3)$,  with  $\alpha$ the fixed point of $T_3 T_2 T_4 T_1 T_3 T_1T_4 T_1 T_3 T_{2}^{2} T_3 T_1T_4$.  This latter function is not aperiodic with respect to its Markov partition $\mathcal P_{\alpha}$; it is not even a surjective function! On left in gray  $f(\mathscr D)$;  on right in red,  the subinterval $[a_{1}^{3}, f(\alpha))$ of $f(\mathscr D)$ is not  in the image  of $f_{\alpha}$.%
}%
\label{f:plots344NonSurjective}%
\end{figure}

\subsection{Surjectivity of $f_{\alpha}$}  

 We first identify a condition that holds exactly when $f_{\alpha}$ is surjective.  Compare the following with Figure~\ref{f:plots344NonSurjective}.
\begin{Lem}\label{l:surjIndentified} Suppose $\alpha\in\mathscr{O}_i$.  The function $f_{\alpha}$ is surjective if and only if there is a union $J$ of elements of  $\mathcal P \setminus \{\mathscr O_i\}$ such that  $f(\mathscr D) \subseteq f(J)$. 
\end{Lem}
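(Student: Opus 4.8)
The plan is to exploit that $f$ is surjective and that $f_{\alpha}$ differs from $f$ only on $\mathscr D = [a_i^{n_i},\alpha)$, a leftmost subinterval of $\mathscr O_i = L_{n_i}(v_i)$. First I would decompose $\mathbb S^1$ into the three pieces $\mathscr D$, $\mathscr O_i\setminus\mathscr D$, and $\mathbb S^1\setminus\mathscr O_i$, on which $f_{\alpha}$ acts respectively by $T_{i-1}$, by $T_i$, and by $f$, so that
\[ f_{\alpha}(\mathbb S^1) = T_{i-1}(\mathscr D)\,\cup\, R,\qquad R:= T_i(\mathscr O_i\setminus\mathscr D)\,\cup\, f(\mathbb S^1\setminus\mathscr O_i). \]
Since $f$ is surjective, the same decomposition for $f$ gives $\mathbb S^1 = T_i(\mathscr D)\cup R = f(\mathscr D)\cup R$, whence $\mathbb S^1\setminus R\subseteq f(\mathscr D)$. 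Setting $B:=\mathbb S^1\setminus R = f(\mathscr D)\setminus R$, surjectivity of $f_{\alpha}$ reduces to the single inclusion $B\subseteq T_{i-1}(\mathscr D)$.

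Second, I would pin down the relevant image intervals using the endpoint formulas \eqref{wpts} together with Lemma~\ref{l:babySteps}. The transformation $T_i$ sends $\mathscr O_i = L_{n_i}(v_i)$ to $L_{n_i-1}(v_{\rho(i)}) = [a_{\rho(i)}^{n_i+1}, a_{\rho(i)}^{n_i+2})$, so $f(\mathscr D)$ is its leftmost subinterval $[a_{\rho(i)}^{n_i+1}, T_i(\alpha))$ while $T_i(\mathscr O_i\setminus\mathscr D) = [T_i(\alpha), a_{\rho(i)}^{n_i+2})$ is the complementary right part; these are adjacent and disjoint, so $T_i(\mathscr O_i\setminus\mathscr D)$ contributes nothing to covering $f(\mathscr D)$, and therefore $B = f(\mathscr D)\setminus f(\mathbb S^1\setminus\mathscr O_i)$. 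On the other hand, Lemma~\ref{l:babySteps} in the case $j=0$ places $T_{i-1}(\mathscr O_i)\subseteq R_{n_i}(v_{\rho(i)}) = [a_{\rho(i)}^{n_i-1}, a_{\rho(i)}^{n_i})$, which is disjoint from $L_{n_i-1}(v_{\rho(i)})\supseteq f(\mathscr D)$. Hence $T_{i-1}(\mathscr D)\cap f(\mathscr D)=\emptyset$. This disjointness is the crux of the argument: it is exactly what removes the $T_{i-1}(\mathscr D)$ term from the final criterion.

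Finally, since $B\subseteq f(\mathscr D)$ while $T_{i-1}(\mathscr D)$ is disjoint from $f(\mathscr D)$, the inclusion $B\subseteq T_{i-1}(\mathscr D)$ holds if and only if $B=\emptyset$, that is, if and only if $f(\mathscr D)\subseteq f(\mathbb S^1\setminus\mathscr O_i)$. To phrase this in partition terms I would note that $\mathscr O_i = L_{n_i}(v_i)$ is a single element of $\mathcal P$ (one checks $n_i\ge 2$ for every signature in $\mathscr E$), so $\mathbb S^1\setminus\mathscr O_i$ is precisely the union $J_0$ of all elements of $\mathcal P\setminus\{\mathscr O_i\}$, and $f(J)\subseteq f(J_0)=f(\mathbb S^1\setminus\mathscr O_i)$ for every union $J$ of such elements. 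Thus an admissible $J$ with $f(\mathscr D)\subseteq f(J)$ exists if and only if $f(\mathscr D)\subseteq f(\mathbb S^1\setminus\mathscr O_i)$, which is the stated equivalence. The main obstacle is the endpoint bookkeeping of the second step: correctly locating $T_{i-1}(\mathscr D)$ and $T_i(\mathscr D)$ among the labelled $a_{\rho(i)}^{k}$ to certify their disjointness; once that is secured the remaining implications are formal.
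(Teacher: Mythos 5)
Your argument is correct and follows essentially the same route as the paper's: both reduce surjectivity of $f_{\alpha}$ to the single condition $f(\mathscr D)\subseteq f(\mathbb S^1\setminus\mathscr O_i)$ by observing that $T_{i-1}(\mathscr D)$ is disjoint from $f(\mathscr D)$ and that $T_i(\mathscr O_i\setminus\mathscr D)$ cannot cover any of $T_i(\mathscr D)$, and then rephrase this in terms of unions of elements of $\mathcal P\setminus\{\mathscr O_i\}$. Your explicit endpoint bookkeeping certifying $T_{i-1}(\mathscr D)\cap f(\mathscr D)=\emptyset$ via Lemma~\ref{l:babySteps} merely fills in a detail the paper asserts without comment.
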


\begin{proof}   Since $f_{\alpha}(\mathscr D) \cap f(\mathscr D) = \emptyset$ and also $f$ is given by $T_i$ on all of $\mathscr O_i \supset \mathscr D$,  the map $f_{\alpha}$ is surjective if and only if $f(\mathscr D)$ is contained in the  image of $\mathbb S^1 \setminus \mathscr  O_i$ under $f$.    Since $\mathbb S^1 \setminus \mathscr  O_i$ admits the partition $\mathcal P \setminus \{\mathscr O_i\}$,  the condition can be relaxed to $f(\mathscr D)$ being contained in the $f$-image of some union of elements of $\mathcal P \setminus \{\mathscr O_i\}$.     
\end{proof}

We now give two further results about $f$-dynamics before showing, using the reduction of the previous lemma,  that the conditions of Theorem~\ref{t:aperiodic} are those identifying the $\alpha$ for which $f_{\alpha}$ is surjective.
\begin{Lem}\label{l:ContainementofNonoverlapIntervals}  
Fix $i$. The interval $f(\mathcal A_{\rho(i-1)})$ contains  $\mathcal A_i$ and  each of $L_k(v_{i}),  2\leq k\leq n_i-1$. When $n_i>2$, a leftmost subinterval of $f(\mathcal A_{\rho(i-1)})$  is formed by $L_{n_i-1}(v_{i})$; otherwise, $n_i=2$ and $\mathcal A_i$ forms a leftmost subinterval of $f(\mathcal A_{\rho(i-1)})$.

 Furthermore, $\mathcal A_{\rho(i-1)}$ meets none of the $L_{j}(v_{\rho(i)}), 2\le j\le n_i$.
\end{Lem}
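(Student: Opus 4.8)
The plan is to make the image $f(\mathcal A_{\rho(i-1)})$ completely explicit and then read off all three assertions from the counterclockwise order of the points $a_i^k$, as recorded in Lemma~\ref{l:keyContainers} together with the key identities $a_j^{n_j}=a_{j+1}^1$ and $a_j^{2n_j}=a_{j+1}^{n_{j+1}+1}$.

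First I would apply Lemma~\ref{l:leftRightFan} with input index $\rho(i-1)$. Since \eqref{e:rho} shows $\rho$ to be an involution, $\rho(\rho(i-1))=i-1$, and the lemma gives
\[ f(\mathcal A_{\rho(i-1)})=R_1(v_{i-1})\setminus R_{n_{i-2}}(v_{i-2}). \]
I would then rewrite both pieces in the labels attached to $v_i$. The identity $a_{i-1}^{2n_{i-1}}=a_i^{n_i+1}$ identifies the left endpoint of $R_1(v_{i-1})$ as $a_i^{n_i+1}$; using $v_{i-2}=v_{i+2}$, $n_{i-2}=n_{i+2}$ and $a_{i-2}^{n_{i-2}}=a_{i-1}^1$ shows that $R_{n_{i-2}}(v_{i-2})=[\,a_{i+2}^{n_{i+2}-1},a_{i-1}^1\,)$ shares the right endpoint of $R_1(v_{i-1})$, so (the difference being an interval by Lemma~\ref{l:leftRightFan}) it is a rightmost subinterval and
\[ f(\mathcal A_{\rho(i-1)})=[\,a_i^{n_i+1},\,a_{i+2}^{n_{i+2}-1}\,). \]

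Next I would compare this arc against $\mathcal A_i=L_1(v_i)\setminus\mathscr O_{i+1}=[\,a_i^{2n_i-1},a_{i+2}^1\,)$ (the right endpoint being $a_{i+1}^{n_{i+1}}=a_{i+2}^1$) and the intervals $L_k(v_i)=[\,a_i^{2n_i-k},a_i^{2n_i-k+1}\,)$, $2\le k\le n_i-1$. Because $n_i\ge2$ and $n_{i+2}\ge2$, the counterclockwise inequalities $a_i^{n_i+1}\le a_i^{2n_i-1}$ and $a_{i+2}^1\le a_{i+2}^{n_{i+2}-1}$ hold, and the monotone listing $a_i^{n_i+1},a_i^{n_i+2},\dots,a_i^{2n_i-1}$ places every such $L_k(v_i)$ together with $\mathcal A_i$ inside $[\,a_i^{n_i+1},a_{i+2}^{n_{i+2}-1}\,)$. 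For the leftmost claim I would split on the value of $n_i$: when $n_i>2$ the interval $L_{n_i-1}(v_i)=[\,a_i^{n_i+1},a_i^{n_i+2}\,)$ is a genuine element of $\mathcal P$ with left endpoint $a_i^{n_i+1}$, hence leftmost; when $n_i=2$ one has $a_i^{n_i+1}=a_i^{2n_i-1}$, so instead $\mathcal A_i$ itself is the leftmost subinterval.

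For the final assertion I would use the shift $\rho(i-1)=\rho(i)+1$, giving $\mathcal A_{\rho(i-1)}\subseteq L_1(v_{\rho(i)+1})$, and then prove $L_1(v_{k+1})\cap\bigcup_{j=2}^{n_k}L_j(v_k)=\emptyset$ for $k=\rho(i)$. Here $\bigcup_{j=2}^{n_k}L_j(v_k)=[\,a_{k+1}^1,a_k^{2n_k-1}\,)$ ends at the left endpoint of $L_1(v_k)=[\,a_k^{2n_k-1},a_{k+1}^{n_{k+1}+1}\,)$, while $L_1(v_{k+1})$ cannot start before $a_{k+1}^{2n_{k+1}-1}\ge a_{k+1}^{n_{k+1}+1}$; these endpoints force disjointness. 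The main obstacle throughout is bookkeeping rather than ideas: one must keep the three index manipulations consistent --- the involution $\rho^2=\mathrm{id}$, the shift $\rho(i-1)=\rho(i)+1$, and the quadrilateral relations $v_{i-2}=v_{i+2}$, $n_i=n_{\rho(i)}$ --- and confirm at each step that the arcs in question are proper sub-arcs of $\mathbb S^1$, so that the counterclockwise comparisons of the $a_i^k$ are legitimate and the set differences land exactly as claimed.
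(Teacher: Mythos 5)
Your proposal is correct, and for the main containment and the leftmost-subinterval claims it takes essentially the paper's route: apply Lemma~\ref{l:leftRightFan} at index $\rho(i-1)$ (using that $\rho$ is an involution) to identify $f(\mathcal A_{\rho(i-1)})=R_1(v_{i-1})\setminus R_{n_{i-2}}(v_{i-2})$, then compare against Definition~\ref{LRpartition}; you simply make explicit the endpoint bookkeeping (via $a_j^{n_j}=a_{j+1}^{1}$ and $a_j^{2n_j}=a_{j+1}^{n_{j+1}+1}$) that the paper compresses into ``one finds''. The one genuine divergence is the final disjointness assertion. The paper argues structurally: $\mathcal A_{\rho(i-1)}$ is a union of elements of $\mathcal P$ that fully contains no $L_j(v_k)$, while each $L_j(v_{\rho(i)})$ with $2\le j\le n_i$ is itself a single element of $\mathcal P$, so any intersection would force full containment, a contradiction. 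You instead argue directly from the cyclic order of the $a_{\rho(i)+1}^{k}$, using $\mathcal A_{\rho(i-1)}\subseteq L_1(v_{\rho(i)+1})$ together with $\bigcup_{j=2}^{n_i}L_j(v_{\rho(i)})\subseteq[a_{\rho(i)+1}^{1},a_{\rho(i)+1}^{n_{\rho(i)+1}+1})$ and $2n_{\rho(i)+1}-1\ge n_{\rho(i)+1}+1$, which by the partition \eqref{e:leftRightPartition} forces disjointness. Both are sound; the paper's version is shorter but leans on the fact that $\mathcal P$ is a (Markov) partition refining the left--right intervals, whereas yours is self-contained at the level of the labelled endpoints and does not need that input.
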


\begin{proof}
From Definition~\ref{LRpartition}, one finds  
\[\mathcal A_i   \cup \bigcup_{k=2}^{n_{i}-1}L_k(v_{i}) \subseteq R_1(v_{i-1})\setminus R_{n_{i-2}}(v_{i-2}).\] (The case of equality occurs exactly when  $n_{i-2}=2$.) Since $\rho^2$ is the identity,  by  \eqref{e:partition} and Lemma~\ref{l:leftRightFan}  we have  the  containment  in $f(\mathcal A_{\rho(i-1)})$.   Definition~\ref{LRpartition} then also implies the statement on the leftmost subintervals.

Finally,  again by the definition of the intervals, $\mathcal A_{\rho(i-1)}= L_1(v_{\rho(i-1)})\setminus L_{n_{\rho(i-1)+1}}(v_{\rho(i-1)+1})$ fully contains no $L_j(v_k)$.  But, each $L_{j}(v_{\rho(i)}), 2\le j\le n_i$ is an element of $\mathcal P$ and $\mathcal A_{\rho(i-1)}$ is a union of elements of $\mathcal P$.  Therefore, by the Markov property of $f$, it must be that  $\mathcal A_{\rho(i-1)}$ does not intersect any of the $L_{k+1}(v_{\rho(i)}), 2 \le k \le n_{i}-1$. 
\end{proof}

\begin{Lem}\label{l:overlapIntervalContainment}
Fix $i$.  The overlap interval  $\mathscr O_i$ is contained in $f( \mathcal A_{\rho(i+2)})$  if and only if $n_{i+1}>2$.
\end{Lem}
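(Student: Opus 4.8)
The plan is to reduce the containment to a purely combinatorial statement about the fan of right-intervals at $v_{i+1}$ and $v_{i+2}$, by first computing $f(\mathcal A_{\rho(i+2)})$ explicitly via Lemma~\ref{l:leftRightFan} and then locating $\mathscr O_i$ relative to it via Lemma~\ref{l:keyContainers}.

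First I would make the target interval explicit. Lemma~\ref{l:leftRightFan} gives, for every index $j$, that $f(\mathcal A_j) = R_1(v_{\rho(j)}) \setminus R_{n_{\rho(j)-1}}(v_{\rho(j)-1})$. Taking $j = \rho(i+2)$ and using that $\rho^2$ is the identity, so that $\rho(j) = i+2$, this becomes
\[
 f(\mathcal A_{\rho(i+2)}) = R_1(v_{i+2}) \setminus R_{n_{i+1}}(v_{i+1}),
\]
where the only arithmetic to check is $(i+2)-1 = i+1$ modulo $4$.

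Next I would read off the position of $\mathscr O_i$ from Lemma~\ref{l:keyContainers}: $\mathscr O_i$ is contained in $R_1(v_{i+2})$, is a leftmost subinterval of $R_2(v_{i+1})$, and is met by no other $R_j(v_k)$. Since containment in $R_1(v_{i+2})$ is therefore automatic, the condition $\mathscr O_i \subseteq f(\mathcal A_{\rho(i+2)})$ is equivalent to $\mathscr O_i$ being disjoint from the removed block $R_{n_{i+1}}(v_{i+1})$. By Lemma~\ref{l:keyContainers}, $R_{n_{i+1}}(v_{i+1})$ meets $\mathscr O_i$ exactly when its index equals $2$, i.e. when $n_{i+1} = 2$. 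I would note that for every signature in $\mathscr E$ one has $n_{i+1} \ge 2$ (conditions (1)--(3) of Definition~\ref{d:WithExtensionProperty} force $m_1, m_2 > 2$, hence $n_2, n_4 \ge 2$, while $n_1 = n_3 = m_3 \ge 2$), so the cases $n_{i+1} = 2$ and $n_{i+1} > 2$ are genuinely exhaustive. If $n_{i+1} > 2$, then $R_{n_{i+1}}(v_{i+1})$ has index other than $2$ and so misses $\mathscr O_i$, giving the containment; if $n_{i+1} = 2$, then $R_{n_{i+1}}(v_{i+1}) = R_2(v_{i+1})$ contains $\mathscr O_i$, so deleting it removes the whole nonempty interval $\mathscr O_i$ and the containment fails.

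I expect the only real obstacle to be index bookkeeping: confirming $\rho(\rho(i+2)) = i+2$ and $(i+2)-1 = i+1$ modulo $4$, and verifying $n_{i+1} \ge 2$ throughout so that the dichotomy is complete. Once those are pinned down, the argument is immediate from the two cited lemmas with no further computation.
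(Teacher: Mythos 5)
Your proposal is correct and follows essentially the same route as the paper: compute $f(\mathcal A_{\rho(i+2)}) = R_1(v_{i+2})\setminus R_{n_{i+1}}(v_{i+1})$ via Lemma~\ref{l:leftRightFan} and then observe via Lemma~\ref{l:keyContainers} that the removed block meets $\mathscr O_i$ exactly when $n_{i+1}=2$. You have merely written out the index bookkeeping that the paper leaves implicit.
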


\begin{proof}   
By Lemma~\ref{l:leftRightFan}, $f(\mathcal A_{\rho(i+2)}) = R_1(v_{i+2})\setminus R_{n_{i+1}}(v_{i+1})$. 
By Lemma~\ref{l:keyContainers}, this set fails to fully contain $\mathscr O_i$ exactly when $n_{i+1}= 2$.   
\end{proof}

We now show that the conditions of Theorem~\ref{t:aperiodic} are those identifying the $\alpha$ for which $f_{\alpha}$ is surjective.  We use an overline over the notation for a set to indicate the closure of the set. 
\begin{Lem}\label{l:DifferingIntervalContainement}
Suppose $\alpha\in\mathscr{O}_i$. There exists a union $J$ of elements of $\mathcal P\setminus \{\mathscr O_i\}$ with  $f(\mathscr D)\subset f(J)$ if and only if any of the following conditions is satisfied:
\begin{itemize}
\item[(i)] $n_i>2$,
\item[(ii)] $n_i=2$ and $n_{i+2}>2$, 
\item[(iii)] $\alpha\in \overline{M_1}$.
\end{itemize}
When any of these conditions holds, $J$  may be taken to be $\mathcal A_{i+1}$. 
\end{Lem}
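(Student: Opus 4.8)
The plan is to compute both sides of the inclusion explicitly and feed them into Lemmas~\ref{l:ContainementofNonoverlapIntervals} and~\ref{l:overlapIntervalContainment}, which are tailored for exactly this. Writing $k=\rho(i)+1$, I would first record the index identities $\rho(\rho(i)-1)=i+1$ and $\rho(k+2)=i+1$ (both immediate from $\rho(\ell+1)=\rho(\ell)-1$ and $\rho^2=\mathrm{id}$), together with $n_{\rho(i)}=n_i$ and $n_{k+1}=n_{i+2}$, so that the proposed $J=\mathcal A_{i+1}$ can be read simultaneously as $\mathcal A_{\rho(\rho(i)-1)}$ and as $\mathcal A_{\rho(k+2)}$, the two forms to which those lemmas apply. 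Since $f=T_i$ on $\mathscr O_i\supseteq\mathscr D$ and $f(\mathscr O_i)=L_{n_i-1}(v_{\rho(i)})$ by \eqref{e:partition}, the set $f(\mathscr D)$ is always a leftmost subinterval of $L_{n_i-1}(v_{\rho(i)})$; this is the object I must cover.

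For sufficiency I would split on $n_i$. If $n_i>2$ (condition (i)), Lemma~\ref{l:ContainementofNonoverlapIntervals}, applied with its running index set to $\rho(i)$, gives that $L_{n_i-1}(v_{\rho(i)})$ is a leftmost subinterval of $f(\mathcal A_{i+1})$, whence $f(\mathscr D)\subseteq f(\mathcal A_{i+1})$. If $n_i=2$, then $f(\mathscr D)\subseteq f(\mathscr O_i)=L_1(v_{\rho(i)})=\mathcal A_{\rho(i)}\sqcup\mathscr O_{\rho(i)+1}$, and the same lemma makes $\mathcal A_{\rho(i)}$ a leftmost subinterval of $f(\mathcal A_{i+1})$. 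Under (iii) I would use that $\theta(i)=\rho(i)$ when $n_i=2$ (from \eqref{e:ThetaValues}) together with Lemma~\ref{l:classificationOKandOrbit1} to identify $M_1$ with the part of $\mathscr O_i$ that $f=T_i$ carries onto $\mathcal A_{\rho(i)}$; thus $\alpha\in\overline{M_1}$ forces $f(\mathscr D)\subseteq\mathcal A_{\rho(i)}\subseteq f(\mathcal A_{i+1})$. Under (ii), Lemma~\ref{l:overlapIntervalContainment} gives $\mathscr O_{\rho(i)+1}\subseteq f(\mathcal A_{i+1})$ (its hypothesis $n_{k+1}>2$ reads $n_{i+2}>2$); since $f(\mathcal A_{i+1})$ is an interval containing $\mathcal A_{\rho(i)}$ leftmost and the adjacent $\mathscr O_{\rho(i)+1}$, it contains all of $L_1(v_{\rho(i)})$, hence $f(\mathscr D)$ for every $\alpha$.

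For necessity I would assume none of (i)--(iii) holds, i.e.\ $n_i=n_{i+2}=2$ and $\alpha\notin\overline{M_1}$, and show that \emph{no} admissible $J$ exists, not merely that $\mathcal A_{i+1}$ fails. The key reduction is that $\mathscr O_k=L_{n_k}(v_k)$ is itself an element of $\mathcal P$, and that by the Markov property $f(W)=W$ the image $f(J)$ of any union of $\mathcal P$-elements is again a union of whole $\mathcal P$-elements. Since $\alpha\notin\overline{M_1}$ forces $f(\mathscr D)$ to meet the atom $\mathscr O_k$ (precisely the part of $f(\mathscr D)$ lying outside $\mathcal A_{\rho(i)}$), any $J$ with $f(\mathscr D)\subseteq f(J)$ must satisfy $\mathscr O_k\subseteq f(J)$, hence $\mathscr O_k\subseteq f(I)$ for a single atom $I\neq\mathscr O_i$. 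I would then rule this out: by Lemma~\ref{l:keyContainers} the only $L_j(v_\ell),R_j(v_\ell)$ containing $\mathscr O_k$ are $L_1(v_{\rho(i)})$, $R_1(v_{k+2})$ and $R_2(v_{k+1})$, and tracing preimages through \eqref{e:partition} and \eqref{wpts} shows these arise as atom-images only from $\mathscr O_i$, from $R_2(v_{i+1})$, and from $R_3(v_{i+2})$ respectively. The first is excluded; within $R_2(v_{i+1})$ only its leftmost atom $\mathscr O_i$ maps over $\mathscr O_k$ (the others map strictly to the right of $L_1(v_{\rho(i)})$); and $R_3(v_{i+2})$ does not exist because $n_{i+2}=2<3$. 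Thus $\mathscr O_i$ is the unique atom whose image covers $\mathscr O_k$, a contradiction.

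The hard part will be this last enumeration. I expect the main obstacle to be justifying cleanly that every $\mathcal P$-atom has image contained in a single coarse interval $L_j(v_\ell)$ or $R_j(v_\ell)$ (so that ``$\mathscr O_k\subseteq f(I)$'' genuinely reduces to the three containers of Lemma~\ref{l:keyContainers}), and in carefully verifying that the non-leftmost atoms of $R_2(v_{i+1})$ land strictly to the right of $\mathscr O_k$. This is exactly where the hypothesis $n_{i+2}=2$ does the work, by eliminating the one interval---$R_3(v_{i+2})$---whose image would otherwise cover the spilled-over part of $\mathscr O_k$.
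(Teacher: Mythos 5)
Your sufficiency argument is essentially the paper's own: the same case split on $n_i$, the same index identities $\rho(\rho(i)-1)=i+1=\rho(\rho(i)+3)$, and the same three inputs (Lemma~\ref{l:ContainementofNonoverlapIntervals} for the leftmost containments, Lemma~\ref{l:classificationOKandOrbit1} with $\theta(i)=\rho(i)$ to identify $M_1$ as the leftmost piece of $\mathscr O_i$ mapping onto $\mathcal A_{\rho(i)}$, and Lemma~\ref{l:overlapIntervalContainment} to adjoin $\mathscr O_{\rho(i)+1}$). That direction is complete. The setup of the necessity direction also matches the paper: since $\alpha\notin\overline{M_1}$, $f(\mathscr D)$ covers $\mathcal A_{\rho(i)}$ and meets the atom $\mathscr O_k$ (with $k=\rho(i)+1$), so by the Markov property a single atom $K\neq\mathscr O_i$ would need $f(K)\supseteq\mathscr O_k$.

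The gap is exactly where you predicted it would be, and it is not merely a matter of ``cleanly justifying'' a true statement: your enumeration of possible sources is incomplete. Tracing preimages through \eqref{e:partition} only accounts for atoms sitting inside a coarse interval $L_r(v_\ell)$ or $R_s(v_\ell)$ with $r,s\ge 2$, where $f$ acts by the index shift. It does not cover atoms contained in some $\mathcal A_j\subseteq L_1(v_j)$: by Lemma~\ref{l:leftRightFan} these map into $R_1(v_{\rho(j)})$ \emph{not} via the shift formula, and for $j=i+1$ this target is $R_1(v_{k+2})$, one of your three containers of $\mathscr O_k$. So the atoms of $\mathcal A_{i+1}$ itself are candidates for $K$ that your list (``only $\mathscr O_i$, $R_2(v_{i+1})$, $R_3(v_{i+2})$'') omits. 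The paper closes precisely this case: it reduces any $R_1(v_\ell)$ to left intervals, then for $K'=L_1(v_\ell)$ observes that $f(L_1(v_\ell))\supseteq\mathscr O_k$ would force $f(\mathcal A_\ell)\supseteq\mathscr O_k$, which by Lemma~\ref{l:leftRightFan} combined with Lemma~\ref{l:overlapIntervalContainment} can happen only when $\ell=\rho(\rho(i)+2)=i+2$ and $n_\ell>2$ --- impossible here since $n_{i+2}=2$. In other words, the hypothesis $n_{i+2}=2$ is used a second time, through Lemma~\ref{l:overlapIntervalContainment}, to kill the $\mathcal A_{i+1}$ candidates, not only to kill $R_3(v_{i+2})$; without this step the necessity direction is not proved.
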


\begin{proof}  

\noindent 
($\Leftarrow$) From \eqref{e:partition},  $f(\mathscr D)\subseteq f(\mathscr O_i)= L_{n_i-1}(v_{\rho(i)})$.  From Lemma~\ref{l:keyContainers}  $\mathcal A_{i+1}$ is disjoint from $\mathscr O_i$.

When $n_i>2$, Lemma~\ref{l:ContainementofNonoverlapIntervals} shows that $f(\mathcal A_{\rho(\rho(i) -1)})$ contains  $f(\mathscr O_i)$.  One easily verifies the identity $\rho(\rho(i) -1) = i+1$.   Thus, $f(\mathcal A_{i+1})  \supset f(\mathscr{D})$.

 If $n_i=2$, then Lemma~\ref{l:ContainementofNonoverlapIntervals} shows that $f(\mathcal A_{i+1})$ contains the subset $\mathcal A_{\rho(i)}$ of $f(\mathscr O_i)$.    
Since here $\theta(i) = \rho(i)$, by Lemma~\ref{l:classificationOKandOrbit1}   $f(\mathscr{D})\subsetneq \mathcal A_{\rho(i)}$  occurs exactly when there is matching  at the first giant step for $\alpha$; namely, when $\alpha\in M_1$.   Since $f$ is orientation preserving,   $f(\mathscr{D})= \mathcal A_{\rho(i)}$ holds exactly when $\alpha \in \overline{M_1}$.  
 
Suppose now that $n_i=2$ and $n_{i+2}>2$; note that this implies  $2\mid i$.   From the previous paragraph, we are reduced to the case that  $f(\mathscr{D})$ is not fully contained in $\mathcal A_{\rho(i)}$.  By definition of the $\mathcal A_j$,  we have $f(\mathscr O_i)  = \mathcal A_{\rho(i)}\cup \mathscr O_{\rho(i)+1}$.
Since $n_{\rho(i)+2} = n_{i+2}>2$, Lemma \ref{l:overlapIntervalContainment} shows that $\mathscr O_{\rho(i)+1}$ is contained in $f(\mathcal A_{\rho(\rho(i)+1+2)})=f(\mathcal A_{i+1})$.  
We have already seen that $\mathcal A_{\rho(i)}\subseteq f(\mathcal A_{i+1})$.    Therefore,
\[f(\mathcal A_{i+1}) \supset \mathcal A_{\rho(i)} \cup \mathscr O_{\rho(i)+1} \supset f(\mathscr{D}).\]

\noindent 
($\Rightarrow$) When none of the conditions hold, we have $n_i = n_{i+2}=2$ and $\alpha\notin \overline{M_1}$.
 Since $n_i=2$, from the above we now have both that $f(\mathscr D)\supset \mathcal A_{\rho(i)}$ and   $f(\mathscr D) \cap \mathscr O_{\rho(i)+1} \neq \emptyset$.   Suppose that $J$ is a union of elements of $\mathcal P \setminus \{\mathscr O_i\}$ such that $f(J) \supset f(\mathscr D)$.  Since $\mathscr O_{\rho(i)+1} \in \mathcal P$,  by  the Markov property of $f$, we have  $f(K) \supset \mathscr O_{\rho(i)+1}$ for some partition element $K$ of the union $J$.   

Recall from  \eqref{e:leftRightPartition} that for each $k$ the partition given by the $R_j(v_k)$ and $L_j(v_k)$ with $1 \le j\le n_k$ is coarser than $\mathcal P$.  Since  $\mathcal P$ is a Markov partition,  $K$ is contained in some such interval, say $K'$.    By
Lemma~\ref{l:keyContainers},   the right or left intervals containing $\mathscr O_{\rho(i)+1}$   are exactly $L_1(v_{\rho(i)}), R_1(v_{\rho(i)+3})$, and  $R_2(v_{\rho(i)+2})$.   
There are three `obvious possibilities' for $K'$:
$L_2(v_i) =  \mathscr O_i; R_2(v_{i+1})$; and, $R_3(v_{i+2} )$ were it to exist, however since $n_{i+2}=2$ it does not.   Since $L_2(v_i)$ is itself in $\mathcal P$, we find that $K$ would be equal to this interval, but this contradicts $K$ avoiding $\mathscr O_i$.  Similarly, $R_2(v_{i+1})\supset \mathscr O_i$ and furthermore, $f$ on $R_2(v_{i+1})\setminus \mathscr O_i$ avoids $\mathscr O_{\rho(i)+1}$; this remaining obvious possibility is ruled out.  

 We now rule out any $K'$ being any $L_1(v_k)$ or $R_1(v_k)$.   It is easily verified that any $R_1(v_k)$ is the union of various left intervals, hence we need only consider the possibility that  $K'= L_1(v_k)$ for some $k$.   
By Lemma~\ref{l:leftRightFan},   if $f(L_1(v_k)) \supset \mathscr O_{\rho(i)+1}$ then $f(\mathcal A_k)\supset \mathscr O_{\rho(i)+1}$.    But by this same lemma combined with Lemma~\ref{l:overlapIntervalContainment},  $f(\mathcal A_k)\supset \mathscr O_{\rho(i)+1}$  if and only if both is $k = \rho(\rho(i)+2)$ and $n_k>2$.   However,  $\rho(\rho(i)+2) = i+2$ and thus here $n_k=2$. That is, the containment does not occur.    We have shown that there is no possible $K'\supseteq K$.    
 
Therefore, there is no possible union of elements of $\mathcal P$ which both avoids $\mathscr O_i$ and has image containing $f(\mathscr D)$.
\end{proof}

\subsection{From surjectivity to aperiodicity; completion of the Proof of Theorem~\ref{t:aperiodic}}
\begin{proof}[Proof of Theorem~\ref{t:aperiodic}]    A non-surjective function cannot be aperiodic, thus  when the conditions of  Theorem~\ref{t:aperiodic}  are not fulfilled, Lemma~\ref{l:DifferingIntervalContainement} shows that $f_{\alpha}$  is certainly not aperiodic with respect to $\mathcal P_{\alpha}$.

Now assume that the conditions are  fulfilled,  and thus $f_{\alpha}(\mathcal A_{i+1}) = f(\mathcal A_{i+1}) \supseteq f(\mathscr{D})$. 
\medskip

\noindent
{\bf Main Step: $f_\alpha$-orbit of $\mathscr D$  meets  $\mathcal A_{i+1}$.} 
By definition of $f_{\alpha}$, we have  $f_\alpha(\mathscr{D})\subset R_{n_i}(v_{\rho(i)})$, as a leftmost subinterval. 
As usual, let $i=i_0$.  
 By Lemma~\ref{l:babySteps},  we have  $f^{n_i-2}$ sends $R_{n_i}(v_{\rho(i)})$ to $R_2(v_{\theta(i)})=R_2(v_{i_1-1})$.
 By Lemma~\ref{l:keyContainers}, $\mathscr O_{i_1-2}$ is  a leftmost subinterval of $R_2(v_{i_1-1})$. Since $i_1\neq i+2$ by \eqref{e:nextI}, $\mathscr O_{i_1-2}\neq \mathscr{O}_i$.    From Lemma~\ref{l:babySteps} and \eqref{e:landingPattern}, the $f$-orbit of a leftmost subinterval of $\mathscr O_{i_1-2}$ reaches, and contains, $\mathcal A_{\theta(i_1-2)}$  before encountering any other overlap intervals.   Direct calculation shows that $\theta(i_1-2) = i_0+1$.   Therefore,  the $f_{\alpha}$-orbit of  $\mathscr D$ reaches   $\mathcal A_{i+1}$.

\medskip
\noindent
{\bf Step 2: $f_\alpha$-orbit of $\mathscr{D}$ contains $f(\mathscr{D})$.}
We have shown that the $f$-orbit of a leftmost subinterval of $f_{\alpha}(\mathscr{D})$ reaches   a leftmost subinterval of $\mathcal A_{i+1}$.  

  We now describe  a sequence of $\mathcal A_{j_k}, k\ge 0$, beginning at any $\mathcal A_{j_0}$, where for each $k$,  there is a leftmost subinterval of $\mathcal A_{j_{k}}$ having $f$-orbit reaching, and containing, $\mathcal A_{j_{k+1}}$ before encountering $\mathscr O$.    Indeed, by Lemma~\ref{l:ContainementofNonoverlapIntervals},  for general $j$, either  $f(\mathcal A_j) \supset \mathcal A_{\rho(j)+1}$ or else $f(\mathcal A_j)$ has a leftmost subinterval given by $L_{n_{\rho(j)+1}-1}(v_{\rho(j)+1})$.  In this latter case, by Lemma~\ref{l:classificationOKandOrbit1}, applying $f^{n_{\rho(j)+1}-2}$ sends a leftmost subinterval surjectively to $\mathcal A_{\ell}$, where $\ell  = \rho^{n_{\rho(j)+1}-2}(\rho(j)+1)$; note that $\mathscr O$ is not encountered during the intermediate steps of applying $f$.   In fact,  this single formula for $\ell$  applies in both of these cases: we pass from a leftmost subinterval of $\mathcal A_j$ to full containment of $\mathcal A_{\ell}$.    That is,  inductively defining $j_{k+1} = \rho^{n_{\rho(j_k)+1}-2}(\rho(j_k)+1)$ results in our desired sequence.   
  
One easily verifies that reversing the arrows in the automaton for $(i_k)_{k\ge 0}$, given in Figure~\ref{f:seqIsPeriodic}, results in an automaton giving the sequence $(j_k)_{k\ge 0}$.    We now desire to show that if  $j_0 = \theta(i_1-2)$, then there is some $k$ such that $j_k = i+1$.   If $2\nmid m_3$, then the sequence takes on all values in $\{1,2,3,4\}$.   Otherwise,  the $i_k$ and the $j_k$ sequences are both of period length two. 

 We can now follow the $f_\alpha$-orbit of $\mathscr{D}$ into the periodic sequence of the $\mathcal A_{j_k}$ beginning with $j_0 = i+1$ and from the expansive aspect of $f$ deduce that this orbit eventually contains all of $\mathcal A_{i+1}$.   Since $\mathscr O$ is avoided while doing this,  we deduce that the $f_\alpha$-orbit of $\mathscr{D}$ contains $f(\mathscr{D})$.

\medskip
\noindent
{\bf Step 3: Aperiodicity holds.}   (This is only in this step that we make use of the hypothesis that $f_{\alpha}$ is Markov with respect to $\mathcal P_{\alpha}$.) We aim to show that the $f_{\alpha}$-orbit of any $I'\in \mathcal P_{\alpha}$ includes all of $\mathbb S^1$.   Let $I' \in \mathcal P_{\alpha}$.  By hypothesis, the partition endpoint set $W_{\alpha}$ is finite.  Thus, the left endpoint of $I'$ has a finite $f_{\alpha}$-orbit, and hence contains a periodic point. Let $J' \in \mathcal P_{\alpha}$ have this periodic point as its left endpoint.  Then by the expansive aspect of $f_{\alpha}$,  applying $f_{\alpha}$ to compositional powers that are multiples of the minimal period length of the endpoint, the image of $J'$ must eventually contain some $I \in \mathcal P$.   Since $f$ is known to be aperiodic with respect to $\mathcal P$, all of $\mathbb S^1$ is contained in a finite union of powers of $f$ applied to $I$.   
\bigskip

If all but possibly the final power of $f$ so applied   are such that their image of $I$ avoids $\mathscr D$, then  
$f_{\alpha}$ would agree with $f$ throughout, and we find that all of $\mathbb S^1$ is contained in a finite union of powers of $f_{\alpha}$ applied to $I$.     Otherwise, each time that a piece of the $f$-orbit of $I$ intersects $\mathscr D\subset \mathscr O_i$, by the Markov property of $f$, that piece of the $f$-orbit meets all of $\mathscr D$.   By Step 2, we can thus follow the $f_{\alpha}$-orbit of $\mathscr D$  until we agree with $f(\mathscr D)$ and thereafter continue following the $f$-orbit.    Therefore, the $f_{\alpha}$-orbit of $I$, and hence that of $I'$, contains all of $\mathbb S^1$.  
\end{proof}

\section{Markov values; the proof of Theorem~\ref{t:markoffIsHyper} }
 \subsection{Markov values are hyperbolic fixed points}     
 
 Arguing as in \cite{bs}, the function $f_{\alpha}$ is Markov with respect to  $\mathcal P_{\alpha}$ if and only if $W_{\alpha}$ is a finite set.   The finiteness of $W_{\alpha}$ implies that $\alpha$ is a hyperbolic fixed point.      
 \begin{Lem}\label{l:finiteImpliesHyperbolic}   If $f_{\alpha}$ is Markov with respect to $\mathcal P_{\alpha}$ then $\alpha$ is a hyperbolic fixed point. 
 \end{Lem}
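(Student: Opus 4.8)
The plan is to reduce the statement to the finiteness of the $f_{\alpha}$-orbit of $\alpha$ and then invoke Lemma~\ref{l:strongExpanGivesPerIffHyper}. By the observation immediately preceding the lemma, $f_{\alpha}$ is Markov with respect to $\mathcal P_{\alpha}$ precisely when $W_{\alpha}$ is a finite set. Since $\{f_{\alpha}^k(\alpha)\}_{k\ge 0}$ is one of the three sets whose union defines $W_{\alpha}$, finiteness of $W_{\alpha}$ immediately forces $\alpha$ to have a finite $f_{\alpha}$-orbit.

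Next I would apply Lemma~\ref{l:strongExpanGivesPerIffHyper} with $g = f_{\alpha}$ and $G = \Gamma$. The hypotheses are satisfied: $f_{\alpha}$ is expansive, as recorded just after its definition, and it is given on the left-closed, right-open intervals of its piecewise definition by elements of $\Gamma$ (namely the $T_j$ together with $T_{i-1}$ on the differing interval). The lemma then yields that $\alpha$, being a point of finite $f_{\alpha}$-orbit, is a fixed point of $\Gamma$; that is, some element $g \neq I$ of $\Gamma$ fixes $\alpha$. It remains only to determine the type of $g$. Because $\alpha \in \mathscr O \subset \mathbb S^1$ lies on the boundary circle, the fixing element $g$ cannot be elliptic, since an elliptic element of $\mathrm{PSL}_2(\mathbb R)$ has its fixed point in the interior of $\mathbb D$ and none on $\mathbb S^1$. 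Thus $g$ is parabolic or hyperbolic. Here I would use the one structural input specific to our setting: $\Gamma$ is a \emph{cocompact} triangle group and so contains no parabolic elements, as parabolics correspond to cusps. Hence $g$ is hyperbolic and $\alpha$ is a hyperbolic fixed point, as claimed.

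The main — indeed essentially the only nonroutine — obstacle is this final trichotomy. Lemma~\ref{l:strongExpanGivesPerIffHyper} produces an element fixing $\alpha$ but gives no control over whether it is parabolic or hyperbolic; the elliptic possibility is excluded for free by the location of $\alpha$ on $\mathbb S^1$, while ruling out the parabolic case is exactly where cocompactness of the triangle group must be invoked. I expect no computation beyond citing these standard facts about Fuchsian groups.
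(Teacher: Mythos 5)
Your proposal is correct and follows essentially the same route as the paper: Markov forces $W_{\alpha}$ finite, hence the $f_{\alpha}$-orbit of $\alpha$ is finite, Lemma~\ref{l:strongExpanGivesPerIffHyper} makes $\alpha$ a fixed point of $\Gamma$, and cocompactness excludes parabolics. The only difference is that you spell out the exclusion of the elliptic case explicitly, which the paper leaves implicit.
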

\begin{proof}    For $f_{\alpha}$ to be Markov, $W_{\alpha}$ must be finite.  In particular, the $f_{\alpha}$-orbit of $\alpha$ must be finite.  By Lemma~\ref{l:strongExpanGivesPerIffHyper} we have that $\alpha$ is a fixed point of $\Gamma$.    Since $\Gamma$ has no parabolic elements, this must be a hyperbolic fixed point.  
\end{proof}

 \subsection{Orbit equivalence and hyperbolic fixed points}
Under orbit equivalence, the converse of the previous result is easily shown to hold. 
 \begin{Lem}\label{l:orbitEquivGivesHyperMark}   Suppose that  $f_{\alpha}$ is orbit equivalent to the action of $\Gamma$ on $\mathbb S^1$.  
 If $\alpha$ is a hyperbolic fixed point of $\Gamma$, then   $f_{\alpha}$ is Markov with respect to $\mathcal P_{\alpha}$.
 \end{Lem}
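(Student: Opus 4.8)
The plan is to prove that $\alpha$ being a hyperbolic fixed point of $\Gamma$ forces the partition endpoint set $W_\alpha$ to be finite, which by the remark preceding Lemma~\ref{l:finiteImpliesHyperbolic} is exactly the Markov property for $f_\alpha$ with respect to $\mathcal P_\alpha$. Since $W$ is already finite (it is $f$-invariant and consists of the endpoints of the net at the four vertices), the task reduces to showing that the two forward $f_\alpha$-orbits $\{f_\alpha^k(\alpha)\}_{k\ge 0}$ and $\{f_\alpha^k(T_{i-1}\alpha)\}_{k\ge 0}$ are each finite. The key tool is the hypothesis of orbit equivalence between $f_\alpha$ and the $\Gamma$-action, together with the general machinery from Subsection~\ref{ss:Strong}.

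First I would invoke Lemma~\ref{l:strengtheningOrbEquiv}: since $f_\alpha$ is given on left-closed, right-open intervals by elements of $\Gamma$ and orbit equivalence is assumed, strong orbit equivalence holds for the pair $(f_\alpha, \Gamma)$. Next, because $\alpha$ is by hypothesis the fixed point of some hyperbolic $T \in \Gamma$, I would apply Lemma~\ref{l:strongHypFixHasFinOrb} directly to conclude that $\alpha$ has finite $f_\alpha$-orbit. This handles the orbit $\{f_\alpha^k(\alpha)\}_{k\ge 0}$ in one stroke. The remaining point $T_{i-1}\alpha$ is the image of $\alpha$ under a group element, so it lies in the same $\Gamma$-orbit as $\alpha$; by orbit equivalence its forward $f_\alpha$-orbit must eventually synchronize with that of $\alpha$, meaning $f_\alpha^p(T_{i-1}\alpha) = f_\alpha^q(\alpha)$ for some $p, q \ge 0$, and since the $\alpha$-orbit is finite this forces the $T_{i-1}\alpha$-orbit to be finite as well.

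Assembling these, $W_\alpha = W \cup \{f_\alpha^k(\alpha)\}_{k\ge 0} \cup \{f_\alpha^k(T_{i-1}\alpha)\}_{k\ge 0}$ is a finite union of finite sets, hence finite, giving the Markov property and completing the proof. The main obstacle I anticipate is the clean handling of the second orbit: I want to be careful that orbit equivalence as stated (existence of $p,q$ with $f_\alpha^p(x) = f_\alpha^q(y)$ whenever $y = Tx$) genuinely transfers finiteness from the $\alpha$-orbit to the $T_{i-1}\alpha$-orbit, rather than merely relating tails. The subtlety is that synchronization $f_\alpha^p(T_{i-1}\alpha) = f_\alpha^q(\alpha)$ lands the $T_{i-1}\alpha$-orbit inside the (finite, hence eventually periodic) forward $\alpha$-orbit after finitely many steps, so the prefix before synchronization is finite and the suffix is contained in a finite set; I would spell this step out explicitly. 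A secondary point worth a sentence is confirming that $\Gamma$ having no parabolic elements is not needed here since hyperbolicity of the fixing element is assumed outright.
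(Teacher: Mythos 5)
Your proof is correct and follows essentially the same route as the paper: reduce the Markov property to finiteness of $W_\alpha$, upgrade orbit equivalence to strong orbit equivalence via Lemma~\ref{l:strengtheningOrbEquiv}, and apply Lemma~\ref{l:strongHypFixHasFinOrb} to conclude that the $f_\alpha$-orbit of $\alpha$ is finite. The only divergence is in handling $T_{i-1}\alpha$: the paper simply observes that the image of a hyperbolic fixed point under a group element is again a hyperbolic fixed point (of the conjugate element) and reapplies Lemma~\ref{l:strongHypFixHasFinOrb}, whereas you synchronize its orbit with that of $\alpha$ via orbit equivalence and then argue about the finite prefix and the suffix landing in a finite set --- a step you handle correctly, but which the paper's observation renders unnecessary.
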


\begin{proof}  The set $W_{\alpha}$ is finite if and only if each of $ \alpha$ and $ T_{i-1} \,\alpha$ have finite $f_{\alpha}$-orbits.  Since $\alpha$ is a fixed point, then so is its image $T_{i-1} \,\alpha$.     Lemmas ~\ref{l:strengtheningOrbEquiv} and ~\ref{l:strongHypFixHasFinOrb} now show that both $\alpha$ and $T_{i-1} \,\alpha$   have finite $f_{\alpha}$-orbit.
\end{proof}

 We argue more directly, since our maps in general do {\em not} enjoy the property of being orbit equivalent to the action of $\Gamma$, see the second named author's PhD dissertation, \cite{aykDiss}.
 
 \subsection{Hyperbolic fixed points give Markov maps, general setting}    To prove the remaining direction of Theorem~\ref{t:markoffIsHyper}, we will in fact prove the following. 
 
 \begin{Thm}\label{t:infiniteCollisions}  Fix $\alpha \in \mathscr O$.  Suppose that $x\in \mathbb S^1$ has infinite $f_{\alpha}$-orbit.  Then there are infinitely many values of $j$ such that the $f$-orbit of $x$ contains 
 either (1) $f^{j}_{\alpha}(x)$,  (2) $f\circ f^{j}_{\alpha}(x)$, or (3) $f^2\circ f^{j}_{\alpha}(x)$. 
 \end{Thm}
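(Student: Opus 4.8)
The plan is to compare the deformed orbit $w_j := f_\alpha^j(x)$ with the undeformed orbit $\mathcal O_f(x) = \{x, f(x), f^2(x),\dots\}$, exploiting that $f_\alpha$ and $f$ agree off the differing interval $\mathscr D \subset \mathscr O_i$. Thus $w_j$ can only leave the $f$-pattern at the instants $w_j \in \mathscr D$; call these the \emph{branch times}. Writing $H = \{\,j : f^a(w_j) \in \mathcal O_f(x)\ \text{for some } a \in \{0,1,2\}\,\}$, the goal is to show $H$ is infinite, and I claim everything reduces to the single invariant $(\heartsuit)$: every branch time lies in $H$. Indeed, if there are infinitely many branch times then $(\heartsuit)$ alone gives $H$ infinite. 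If there are only finitely many, let $J$ be the last, $z := w_J \in \mathscr D$; thereafter the orbit follows $f$, so $w_{J+1+q} = f^q(T_{i-1}z)$, and the matching identity $f^{p+1}(z) = f^p(T_{i-1}z)$ from Lemma~\ref{l:classificationOKandOrbit1} (with $p = r_\ell$ attached to $z \in M_\ell$ via Lemma~\ref{l:allMatching}) forces $w_{J+1+q} = f^{q+1}(z) \in \mathcal O_f(z)$ for all $q \ge p$; combined with $(\heartsuit)$ at $J$ this makes every large time a lag-$0$ hit.

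I would prove $(\heartsuit)$ by induction on the ordered branch times. The base case is immediate: before the first visit $f_\alpha \equiv f$ along the orbit, so $w_{t_1} = f^{t_1}(x) \in \mathcal O_f(x)$ with $a = 0$. For the step, let $t^- < t$ be the previous branch time, $z^- := w_{t^-} \in \mathscr D$, so by hypothesis $f^{a^-}(z^-) \in \mathcal O_f(x)$ for some $a^- \le 2$. Between $t^-$ and $t$ the orbit follows $f$, so $w_t = f^{q}(T_{i-1}z^-)$ with $q = t - t^- - 1$. When $q \ge p(z^-)$ the matching identity yields $w_t = f^{q+1}(z^-) \in \mathcal O_f(z^-) \subseteq \mathcal O_f(x)$, giving $a = 0$. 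The genuine content is the \emph{interruption} case $q < p(z^-)$, in which the orbit of $T_{i-1}z^-$ re-enters $\mathscr D$ strictly before its matching step.

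To treat interruptions I would locate precisely where the ``$y$-orbit'' $f^{\bullet}(T_{i-1}z^-)$ can meet $\mathscr O_i$. Using the giant-step decomposition of Lemma~\ref{l:allMatching}, the identity $y_k = T_{i_k-1}(x_k)$ with $x_k \in \mathscr O_{i_k}$ (from iterating Lemma~\ref{l:classificationOKandOrbit1}), and the baby-step positions of Lemma~\ref{l:babySteps} and Lemma~\ref{l:yAtEndOfGiantStep}, each $y$-orbit point sits in a right interval $R_{n_{i_k}-j}(v_{\rho^{1+j}(i_k)})$. By Lemma~\ref{l:keyContainers} such an interval meets $\mathscr O_i$ only when it is $R_2(v_{i+1})$ or $R_1(v_{i+2})$, i.e.\ only at the last or second-to-last baby step of a giant step. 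Feeding in the period-$2$/period-$4$ structure and alternating parity of $(i_k)$ from Lemma~\ref{l:periodOfIsubK}, I expect to show that any re-entry into $\mathscr D$ occurs within one or two $f$-steps of a point $f^c(z^-)$ with $c \ge a^-$; that is, $f^a(w_t) = f^c(z^-) \in \mathcal O_f(x)$ for some $a \in \{1,2\}$, which is $(\heartsuit)$ at $t$ and the exact source of alternatives (2) and (3).

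The main obstacle is this last bounded-lag control: showing that touching the \emph{leftmost} subinterval $\mathscr D$ (rather than merely $\mathscr O_i$) can happen only at $f$-distance $\le 2$ from rejoining the $z^-$-orbit, uniformly across both index cycles of Lemma~\ref{l:periodOfIsubK} and across the $n_{i_k}=2$ versus $n_{i_k}>2$ regimes handled by Lemma~\ref{l:keyContainers} and Lemma~\ref{l:overlapIntervalContainment}. If this uniform bound holds at every re-entry, then $(\heartsuit)$ propagates and the two cases (finitely many versus infinitely many branch times) complete the argument; the value $2$ is precisely what makes the three-fold alternative (1)--(3), and not a single clause, the correct and necessary statement.
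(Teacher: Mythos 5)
Your architecture is essentially the paper's: isolate the visits to $\mathscr D$ (the paper's $M_\ell$ decomposition via Lemma~\ref{l:allMatching}), use the matching identity $f^{r_\ell+1}(z)=f^{r_\ell}(T_{i-1}z)$ to rejoin the $f$-orbit, and reduce everything to controlling how the $f$-orbit of $T_{i-1}z$ can re-enter $\mathscr D$ before the matching step. Your induction on branch times with the invariant $(\heartsuit)$ is in fact tidier bookkeeping than the paper's device of ``temporarily renaming'' an orbit element as $x$, and your identification of the candidate re-entry positions ($R_2(v_{i+1})$ and $R_1(v_{i+2})$ via Lemma~\ref{l:keyContainers}, hence only the last or second-to-last baby step of a giant step) is correct. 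But the decisive step --- what you call the bounded-lag control --- is exactly where you stop: you write that you ``expect to show'' that any re-entry into $\mathscr D$ occurs within one or two $f$-steps of rejoining the $z^-$-orbit. That claim is not a routine consequence of the lemmas you cite; it is the entire technical content of the theorem, and the paper spends Lemmas~\ref{l:giantStepsNotThereEvenM3} through~\ref{l:mOddMatching} proving it.

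Concretely, what must be shown is: for $x\in M_\ell$, the points $f^j(y_k)$ with $0<j<n_{i_k}-1$, and the points $y_k$ themselves, can lie in $\mathscr O_{i_0}$ only when $k\in\{\ell-1,\ell-2\}$ --- i.e., an early entry \emph{forces} matching one or two steps later. Knowing \emph{where} in a giant step an entry can occur is the easy half; the hard half is showing that landing there pins down \emph{which} giant step it is. That requires, e.g., observing that $y_{k+1}$ would have to lie in $L_{n_i-1}(v_{\rho(i)})\cap R_{n_{\rho(i+2)}}(v_{\rho(i+2)})$, an intersection that is empty except when $m_1=m_2=4$; and in that exceptional case one must chase one further giant step to force matching at $k=\ell-2$, which is precisely the source of the $f^2$ in alternative (3). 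The period-$2$/period-$4$ structure of $(i_k)$ alone does not deliver this. Without it, nothing rules out re-entries into $\mathscr D$ far from the matching step, the lag is not bounded by $2$, and the induction on branch times does not close. So the proposal is a correct plan that matches the paper's strategy, but the theorem's actual content is left unproved.
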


\noindent
\begin{proof}[Theorem~\ref{t:infiniteCollisions} implies completion of proof of Theorem~\ref{t:markoffIsHyper}]
 By the Pigeonhole Principle, we can then assume one of the three cases occurs for infinitely many values of $j$.  Since  $f$ is a  finite-to-one function, if  the $f$-orbit of $x$ is finite, then there are only finitely many preimages under $f$ or $f^2$ of this finite set.  It  then follows that whenever the $f$-orbit of $x$ is finite, there are some $j \neq k$ such that $f_{\alpha}^{j}(x)  = f_{\alpha}^{k}(x)$.  The finiteness of 
the $f_{\alpha}$-orbit of $x$ then follows.   Due to the strong orbit equivalence of $f$ and $\Gamma$, every hyperbolic fixed point of $\Gamma$ has finite $f$-orbit.   Given that $\alpha$ is a hyperbolic fixed point of $\Gamma$ then so is $T_{i-1}(\alpha)$, it follows that both of these values also have finite $f_{\alpha}$ orbits.    This in turn implies that $f_{\alpha}$ is Markov, and thus the proof of Theorem~\ref{t:markoffIsHyper} will indeed be complete. 
 \end{proof} 
 
 Of course if the $f_{\alpha}$-orbit of $x$ never meets the differing interval $\mathscr D$, then this orbit {\em is} the $f$-orbit of $x$.  Theorem~\ref{t:infiniteCollisions}  thus trivially holds for such $x$.   Each time that the $f$-orbit of $x$  enters $\mathscr D \subset \mathscr O_i$, it does so in some $M_{\ell}$.  Hence our proof will focus on $f$-orbits of points in $M_{\ell}$.

\subsubsection{Proof of Theorem~\ref{t:infiniteCollisions} when $m_3$ is even}   In this subsection we show that Theorem~\ref{t:infiniteCollisions}   holds in the case of `easy case' of even $m_3$.

\begin{Lem}\label{l:giantStepsNotThereEvenM3}  Suppose that $m_3$ is even.   
Fix $i= i_0$ and let $x = x_0 \in M_{\ell}$.  Then for $0\le k<\ell$ we have $y_k \notin \mathscr O_{i_0}$. 
\end{Lem}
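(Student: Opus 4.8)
The plan is to show that, step by step, the ``right-orbit'' point $y_k$ shadows the ``left-orbit'' point $x_k$ through a single side-pairing, and then to read off from the interval containing $y_k$ that it cannot land in $\mathscr O_{i_0}$. First I would establish, by induction on $k$ for $0 \le k < \ell$, the identity $y_k = T_{i_k - 1}(x_k)$ together with $x_k \in \mathscr O_{i_k}$. The base case $k=0$ is the definition $y_0 = T_{i_0 - 1}(x_0)$, and $x_0 \in \mathscr O_{i_0}$ holds by hypothesis. For the inductive step, since $x_k \in \mathscr O_{i_k} = L_{n_{i_k}}(v_{i_k})$ for $k < \ell$ by Lemma~\ref{l:allMatching}, I apply the matching identity \eqref{orbit1} of Lemma~\ref{l:classificationOKandOrbit1} with index $i_k$ to $x_k$, obtaining
\[
T_{\theta(i_k)}\, x_{k+1} = T_{\theta(i_k)}\, f^{n_{i_k}-1}(x_k) = f^{n_{i_k}-1}\bigl(T_{i_k-1}(x_k)\bigr) = f^{n_{i_k}-1}(y_k) = y_{k+1}.
\]
Because $\theta(i_k) = i_{k+1} - 1$ by \eqref{e:indexSequence}, this reads $y_{k+1} = T_{i_{k+1}-1}(x_{k+1})$, completing the induction.

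With the identity in hand, the second step locates $y_k$. Applying the $j=0$ case of the second line of Lemma~\ref{l:babySteps} to $x_k \in L_{n_{i_k}}(v_{i_k})$ gives
\[
y_k = T_{i_k - 1}(x_k) \in R_{n_{i_k}}\bigl(v_{\rho(i_k)}\bigr).
\]
By Lemma~\ref{l:keyContainers}, the only left or right intervals meeting $\mathscr O_{i_0}$ are $R_2(v_{i_0+1})$, $L_1(v_{i_0-1})$, and $R_1(v_{i_0+2})$, so it suffices to verify that $R_{n_{i_k}}(v_{\rho(i_k)})$ is none of these. It is not an $L$-interval, and it cannot equal $R_1(v_{i_0+2})$ because every $n_i \ge 2$ for a signature in $\mathscr E$: the constraints of Definition~\ref{d:WithExtensionProperty} force $m_1,m_2 \ge 4$ and $m_3 \ge 2$, whence $n_1=n_3=m_3$, $n_2=m_2/2$, $n_4=m_1/2$ are all at least $2$. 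Thus only $R_2(v_{i_0+1})$ remains, and I must check $\rho(i_k) \ne i_0 + 1$ for all $k$.

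Here is where the hypothesis that $m_3$ is even enters. By Lemma~\ref{l:periodOfIsubK} the sequence $(i_k)$ is purely periodic of period $2$, so all $i_k$ lie in the single orbit of $i_0$, which is either $\{1,4\}$ or $\{2,3\}$. Since $\rho$ fixes $2,4$ and interchanges $1,3$ (immediate from \eqref{e:rho} and the definition of $\sigma$), one has $\rho(\{1,4\}) = \{3,4\}$ and $\rho(\{2,3\}) = \{1,2\}$; comparing these with the value of $i_0 + 1 \pmod 4$ in each of the four cases $i_0 \in \{1,2,3,4\}$ shows $i_0+1 \notin \rho(\{i_k\})$ every time. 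Hence $R_{n_{i_k}}(v_{\rho(i_k)})$ never meets $\mathscr O_{i_0}$, so $y_k \notin \mathscr O_{i_0}$ for all $0 \le k < \ell$.

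I expect the genuine obstacle to be the inductive first step: recognizing that the defining matching relation of Lemma~\ref{l:classificationOKandOrbit1} propagates, yielding $y_k = T_{i_k-1}(x_k)$ with $x_k$ again sitting at the left end in $\mathscr O_{i_k}$, is the one structural insight. Once this shadowing is in place, the remainder is a finite, parity-driven bookkeeping check, and the evenness of $m_3$ is used only through the resulting period-two index dynamics.
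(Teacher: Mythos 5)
Your proof is correct and takes essentially the same route as the paper's: both arguments place $y_k$ in $R_{n_{i_k}}(v_{\rho(i_k)})$ (the paper via Lemma~\ref{l:yAtEndOfGiantStep} and the reasoning of Lemma~\ref{l:allMatching}, you via the iterated identity \eqref{orbit1} giving $y_k = T_{i_k-1}(x_k)$ together with Lemma~\ref{l:babySteps} --- the same interval, since $n_{\rho(i)}=n_i$), then invoke Lemma~\ref{l:keyContainers} to reduce the danger to $\rho(i_k)=i_0+1$ with $n_{i_k}=2$, and rule that out using the period-two structure of $(i_k)$ forced by $m_3$ even.
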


\begin{proof} The case of $k = 0$ is clear, since $\mathscr O_{i_0}$ has empty intersection with its image under $T_{i_0-1}$. 

For $0<k<\ell$,  arguing as in the proof of Lemma~\ref{l:allMatching}, allows Lemma~\ref{l:yAtEndOfGiantStep} to show that  $y_k \in R_{n_{\rho(i_k)}}(v_{\rho(i_k)})$.  In view of Lemma~\ref{l:keyContainers}, since every $n_j>1$ always holds,    only if both $n_{\rho(i_k)} = 2$ and $\rho(i_k) = i_0+1$ could $y_k$ possibly lie in $\mathscr O_{i_0}$.   Since  $\rho = (13)$ as an element of the permutation group $S_4$, we have $\rho(i_k) = i_0+1$ if and only if  $(i_0, i_k) \in \{(1,2), (2,1), (3,4), (4,3)\}$.   Since $m_3$ is even,  $(i_k)_{k\ge 0}$  has period length two with period a permutation of one of $(1,4)$ and $(2,3)$.    It follows that $y_k \notin \mathscr O_{i_0}$.    
\end{proof} 

 \begin{Lem}\label{l:babyStepsWhenM3IsEven}   Suppose that $m_3$ is even.        Fix $i= i_0$ and let $x = x_0 \in M_{\ell}$ for some $\ell \in \mathbb N$.  Then for all $0\le k <\ell$ and $0< j < n_{i_k}-1$, we have   $f^j(y_k) \notin \mathscr O_{i_0}$. 
 \end{Lem}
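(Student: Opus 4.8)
The plan is to track each $y_k$ as the image $T_{i_k-1}x_k$ of the corresponding point $x_k$ in the parallel left-orbit, and then to pin each baby step $f^j(y_k)$ down inside one specific right-interval whose position relative to $\mathscr O_{i_0}$ is dictated by Lemma~\ref{l:keyContainers}. First I would record that for $k<\ell$ one has $x_k \in \mathscr O_{i_k}$ (Lemma~\ref{l:allMatching}) together with the identity $y_k = T_{i_k-1}x_k$. The latter I would prove by induction from $y_0 = T_{i_0-1}x_0$: applying $f^{n_{i_k}-1}$ to $y_k = T_{i_k-1}x_k$ and invoking the matching identity \eqref{orbit1} of Lemma~\ref{l:classificationOKandOrbit1} gives $y_{k+1} = T_{\theta(i_k)}x_{k+1} = T_{i_{k+1}-1}x_{k+1}$, where $\theta(i_k) = i_{k+1}-1$ by \eqref{e:indexSequence}. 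With $x_k \in L_{n_{i_k}}(v_{i_k})$ and $y_k = T_{i_k-1}x_k$ established, Lemma~\ref{l:babySteps} applies verbatim with $i$ replaced by $i_k$, yielding
\[
f^j(y_k) \in R_{n_{i_k}-j}\big(v_{\rho^{1+j}(i_k)}\big), \qquad 0 \le j \le n_{i_k}-1.
\]

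Next I would appeal to Lemma~\ref{l:keyContainers}, by which the only right-intervals meeting $\mathscr O_{i_0}$ are $R_1(v_{i_0+2})$ and $R_2(v_{i_0+1})$. Since $f^j(y_k)$ lies in the single interval $R_{n_{i_k}-j}(v_{\rho^{1+j}(i_k)})$, it can belong to $\mathscr O_{i_0}$ only when that interval is one of these two. The interval $R_1(v_{i_0+2})$ forces $n_{i_k}-j=1$, i.e. $j=n_{i_k}-1$, which is excluded by the hypothesis $j<n_{i_k}-1$; this boundary step is precisely the giant step already handled by Lemma~\ref{l:giantStepsNotThereEvenM3}. The interval $R_2(v_{i_0+1})$ forces $n_{i_k}-j=2$, i.e. $j=n_{i_k}-2$, together with $\rho^{1+j}(i_k)=\rho^{n_{i_k}-1}(i_k)=\theta(i_k)=i_0+1$.

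The crux is to rule out $\theta(i_k)=i_0+1$. Since $\theta(i_k)=i_{k+1}-1$, this is the assertion $i_{k+1}=i_0+2$, and here I would invoke Lemma~\ref{l:periodOfIsubK}: because $m_3$ is even, $(i_k)_{k\ge 0}$ is purely periodic of period $2$, so $i_{k+1}\in\{i_0,i_1\}$, and the two period values have opposite parity. As $i_0+2$ has the same parity as $i_0$ and is distinct from $i_0$ modulo $4$, it equals neither $i_0$ nor $i_1$; hence $i_{k+1}\neq i_0+2$, the case is impossible, and the argument closes. I expect the only genuine obstacle to be bookkeeping rather than substance: verifying the identity $y_k=T_{i_k-1}x_k$ and confirming that Lemma~\ref{l:babySteps} transports cleanly to each giant step. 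Once each baby step is confined to its single right-interval, the parity count from the period-$2$ structure disposes of the remaining case at once.
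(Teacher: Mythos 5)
Your proposal is correct and follows essentially the same route as the paper: confine each baby step $f^j(y_k)$ to a single right-interval via Lemma~\ref{l:babySteps}, use Lemma~\ref{l:keyContainers} to reduce to the case $j=n_{i_k}-2$ with $\theta(i_k)=i_0+1$, and rule that out by the period-two, parity-alternating structure of $(i_k)$ from Lemma~\ref{l:periodOfIsubK}. Your explicit inductive verification of $y_k=T_{i_k-1}x_k$ via \eqref{orbit1} is a detail the paper leaves implicit, but it is the same argument.
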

\begin{proof}     First notice that if $n_{i_k}= 2$, then the condition $0< j < n_{i_k}-1$ is never fulfilled.
We assume that $k$ is such that the voidness of the condition on $j$ is avoided.

In view of  Lemmas~\ref{l:babySteps}  and ~\ref{l:keyContainers}, the only non-trivial case is $j = n_{i_k}-2$, where we then know that $f^{n_{i_k}-2}(y_k) \in  R_2(v_{\theta(i_k)})$, an interval which meets    $\mathscr O$ exactly in  $\mathscr O_{\theta(i_k)-1}$.   Thus, if we can show that $i_0 \neq \theta(i_k)-1$ we are done.  From \eqref{e:indexSequence},  $\theta(i_k)-1 = i_{k+1}-2$.

 By Lemma~\ref{l:periodOfIsubK}, the sequence $(i_k)_{k\ge 0}$ is purely periodic of length two, with one odd value and one even.     Now, whatever the value of $i_0 \in \{1,2,3,4\}$ we cannot have that $ i_{k+1}-2 = i_0$.   The result thus holds. 
\end{proof}

We now show  that Theorem~\ref{t:infiniteCollisions}   holds in this case of even $m_3$. 
 \begin{Lem}\label{l:freqMeetsWhenM3IsEven}   Suppose that $m_3$ is even and  that $x\in \mathbb S^1$ has infinite $f_{\alpha}$-orbit.  Then there are infinitely many values of $j$ such that the $f$-orbit of $x$ contains 
 $f^{j}_{\alpha}(x)$. 
 \end{Lem}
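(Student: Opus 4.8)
The plan is to compare the $f_\alpha$-orbit of $x$ with the $f$-orbit of $x$ and to show that the two orbits coincide infinitely often; the mechanism is the giant-step matching recorded in Lemma~\ref{l:allMatching}, made usable by Lemmas~\ref{l:giantStepsNotThereEvenM3} and~\ref{l:babyStepsWhenM3IsEven}, which for even $m_3$ keep the orbit following a flip away from $\mathscr O_i \supseteq \mathscr D$ until the matching giant step has been completed.

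First I would establish a \emph{rejoining identity}. Fix $i=i_0$ with $\alpha \in \mathscr O_i$, and let $z \in \mathscr D$. Since the $M_\ell$ partition $\mathscr O_i$ by Lemma~\ref{l:allMatching}, we have $z \in M_\ell$ for some $\ell$; set $p = r_\ell$ and $y_0 = T_{i-1}(z)$. Because $z \in \mathscr D$ we have $f_\alpha(z) = T_{i-1}(z) = y_0$. Lemmas~\ref{l:giantStepsNotThereEvenM3} and~\ref{l:babyStepsWhenM3IsEven} together show that every point $f^m(y_0)$ with $0 \le m \le p-1$ lies outside $\mathscr O_i$, hence outside $\mathscr D$; since $f_\alpha$ and $f$ differ only on $\mathscr D$, this forces $f_\alpha^{1+m}(z) = f^m(y_0)$ for all $0 \le m \le p$. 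Combining the endpoint value $f_\alpha^{1+p}(z) = f^p(y_0) = f^p(T_{i-1}(z))$ with the defining matching property $f^{p+1}(z) = f^p(T_{i-1}(z))$ of $M_\ell$, I obtain
\[
f_\alpha^{1+p}(z) = f^{1+p}(z).
\]
A bookkeeping check that the giant-step indices $r_k$ together with the baby-step indices $r_k + j$ (for $0 \le k < \ell$ and $0 < j < n_{i_k}-1$) exhaust $\{0,\dots,p-1\}$ confirms that no intermediate point of the segment escapes the control of the two lemmas.

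With the rejoining identity in hand, I would run an induction producing a strictly increasing sequence of \emph{collision times} $s_0 = 0 < s_1 < s_2 < \cdots$ at which $f_\alpha^{s_n}(x)$ belongs to the $f$-orbit of $x$. The base case is $f_\alpha^0(x) = x$. For the inductive step, assume $f_\alpha^{s}(x) = f^m(x)$. If $f_\alpha^s(x) \notin \mathscr D$, then $f_\alpha^{s+1}(x) = f\bigl(f_\alpha^s(x)\bigr) = f^{m+1}(x)$, a collision at time $s+1$. If instead $z := f_\alpha^s(x) \in \mathscr D$, say $z \in M_\ell$ with $p = r_\ell$, then the rejoining identity applied to $z$ gives $f_\alpha^{s+1+p}(x) = f^{1+p}(z) = f^{m+1+p}(x)$, a collision at time $s+1+p$. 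In either case there is a strictly later collision time, so the sequence $(s_n)$ is infinite; this produces infinitely many $j$ for which $f_\alpha^j(x)$ lies in the $f$-orbit of $x$, which is exactly conclusion~(1) of Theorem~\ref{t:infiniteCollisions}.

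The heart of the argument — and the only place where the parity of $m_3$ enters — is the control of the excursion of the $y$-orbit after each flip, which is precisely what Lemmas~\ref{l:giantStepsNotThereEvenM3} and~\ref{l:babyStepsWhenM3IsEven} provide: because $m_3$ is even, Lemma~\ref{l:periodOfIsubK} makes the index sequence $(i_k)$ purely periodic of period two, and this is what prevents the $y$-orbit from re-entering $\mathscr O_i$ before the giant step is complete. I expect this containment to be the genuinely hard point (and the obstruction to a naive argument in the odd-$m_3$ case treated separately), but here it is already packaged for use, so the remaining work is just the clean telescoping induction above.
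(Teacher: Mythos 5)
Your proposal is correct and follows essentially the same route as the paper: both rest on the observation that Lemmas~\ref{l:giantStepsNotThereEvenM3} and~\ref{l:babyStepsWhenM3IsEven} keep the $f$-orbit of $T_{i-1}(z)$ out of $\mathscr O_{i_0}\supseteq\mathscr D$ for the first $r_\ell$ steps, so that $f_\alpha^{r_\ell+1}(z)=f^{r_\ell}(T_{i-1}z)=f^{r_\ell+1}(z)$ by the defining property of $M_\ell$. Your explicit induction on collision times is just a cleaner packaging of the paper's case split between finitely and infinitely many entries into $\mathscr D$, and your index bookkeeping (that $j=0$ is covered by the giant-step lemma and $0<j<n_{i_k}-1$ by the baby-step lemma) checks out.
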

\begin{proof} As mentioned above, if the $f_{\alpha}$-orbit of $x$ never enters $\mathscr D$, then this orbit and the $f$-orbit agree.  Thus in this case, the result clearly holds.   Otherwise,  by temporarily renaming some element of this $f_{\alpha}$-obit  as $x$ as necessary, we may  suppose that there is some $\ell$ such that $x \in M_{\ell} \cap \mathscr D$.   Set $y = f_{\alpha}(x) = T_{i_0-1}(x)$.  By the previous two lemmas, we have that the $f$-orbit of $y$ avoids $\mathscr O_{i_0}$ and hence certainly $\mathscr D$, until at least $f^{r_{\ell}}(y)$.  In particular,  $f^{r_{\ell}}(y) = f_{\alpha}^{r_{\ell}}(y)$. Now, by definition of $M_{\ell}$ we have that $f^{r_{\ell}+1}(x) = f^{r_{\ell}}(y)$.   Since  $f^{r_{\ell}+1}(x) = f_{\alpha}^{r_{\ell}}(y) = f_{\alpha}^{r_{\ell}+1}(x)$, we have a matching associated to this entry into $\mathscr D$.

If there  are only finitely many  entries into $\mathscr D$ of the $f_{\alpha}$-orbit of $x$, then there is a last matching as above.  We  take the values of $j$ giving the  infinite tail that begins with this matching of the orbits.  Finally, if  the orbit returns infinitely often to $\mathscr D$ then we take those $j$ giving the corresponding matching values. 
\end{proof}

 \subsubsection{Odd $m_3$.}   In this subsection we show that Theorem~\ref{t:infiniteCollisions}   holds in the remaining case of odd $m_3$.

 \begin{Lem}\label{l:nAllBigFindMatches} 
Suppose that $m_3$ is odd. Fix $i=i_0$ and let $x=x_0\in M_\ell$. If 
\begin{itemize}
\item[(i)] $2\nmid i$ or
\item[(ii)] $2\mid i$ and at least one of $m_1$, $m_2$ is not equal to 4,
\end{itemize}
 then for all $0\leq k<\ell-1$ and $0<j<n_{i_k}-1$, we have $f^j(y_k)\notin\mathscr{O}_{i_0}$.  
 
  Moreover, if $k= \ell-1$ then for $0<j<n_{i_k}-1$ the containment of $f^j(y_{\ell-1})$ in $\mathscr{O}_{i_0}$ is only possible if $\ell \equiv 2\,(\text{mod } 4)$ and $j=n_{i_k}-2$.
\end{Lem}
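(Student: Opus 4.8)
The plan is to push the entire analysis onto the $y$-orbit, locate every baby step with Lemma~\ref{l:babySteps}, and then excise the single dangerous landing using the no-matching alternative of Lemma~\ref{l:yAtEndOfGiantStep}. First I would record the shadowing identity $y_k = T_{i_k-1}(x_k)$ with $x_k \in \mathscr O_{i_k}$, valid for $0 \le k \le \ell-1$: the base case is the definition of $y_0$, and for the inductive step \eqref{orbit1} gives $y_{k+1} = f^{n_{i_k}-1}(y_k) = T_{\theta(i_k)}(x_{k+1}) = T_{i_{k+1}-1}(x_{k+1})$, while $x_{k+1}\in\mathscr O_{i_{k+1}}$ comes from Lemma~\ref{l:allMatching}. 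Substituting $y_k = T_{i_k-1}(x_k)$ into Lemma~\ref{l:babySteps} then places each baby step precisely, namely $f^j(y_k) \in R_{n_{i_k}-j}(v_{\rho^{1+j}(i_k)})$ for $0 \le j \le n_{i_k}-1$.

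Next I would invoke Lemma~\ref{l:keyContainers}: among the intervals $R_{n_{i_k}-j}(v_{\cdot})$ only $R_2(v_{i_0+1})$ and $R_1(v_{i_0+2})$ can meet $\mathscr O_{i_0}$, and the latter requires $n_{i_k}-j = 1$, i.e.\ $j = n_{i_k}-1$, which is barred by $0 < j < n_{i_k}-1$. A collision therefore forces $j = n_{i_k}-2$ together with $\rho^{n_{i_k}-1}(i_k) = \theta(i_k) = i_0+1$, equivalently $i_{k+1} = i_0+2$; in particular $n_{i_k} > 2$. Since $m_3$ is odd, Lemma~\ref{l:periodOfIsubK} makes $(i_k)$ a $4$-cycle with $i_k \equiv i_0+k \pmod 4$, so $i_{k+1} = i_0+2$ amounts to $k \equiv 1 \pmod 4$. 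Taking $k = \ell-1$ turns this into $\ell \equiv 2 \pmod 4$ with the unique candidate $j = n_{i_k}-2$, which is exactly the second (``Moreover'') assertion.

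For the first assertion it remains to rule out the collision when $k < \ell-1$. Here $k \le \ell-2$, so $x_{k+1} \in \mathscr O_{i_{k+1}}$ and no matching has yet occurred. Assuming for contradiction that $f^{n_{i_k}-2}(y_k) \in \mathscr O_{i_0}$, applying $f$ and using $f(\mathscr O_{i_0}) = L_{n_{i_0}-1}(v_{\rho(i_0)})$ from \eqref{e:partition} puts $y_{k+1} \in L_{n_{i_0}-1}(v_{\rho(i_0)})$. But the no-matching branch of Lemma~\ref{l:yAtEndOfGiantStep}, applicable precisely because $x_{k+1} \in \mathscr O_{i_{k+1}}$, puts $y_{k+1} \in R_{n_{i_{k+1}}}(v_{\rho(i_{k+1})})$. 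With $i_{k+1} = i_0+2$ and $\rho = (1\,3)$, the first interval lies immediately counterclockwise past $\mathscr O_{\rho(i_0)}$ and the second immediately before $\mathscr O_{\rho(i_0+2)}$, so they sit in two different regions $[a_j^1, a_j^{n_j})$ of the partition and should be disjoint, contradicting the shared point $y_{k+1}$.

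The crux, and the one place the hypotheses enter, is this disjointness. When $i_0$ is odd both indices equal $n_1 = n_3 = m_3 \ge 3$, and by Lemma~\ref{l:keyContainers} an interval can contain an overlap interval only if it is some $R_1(v_\cdot)$, $R_2(v_\cdot)$ or $L_1(v_\cdot)$; since $L_{n_{i_0}-1}$ and $R_{n_{i_{k+1}}}$ then have index $\ge 2$ and $\ge 3$ respectively, neither balloons, and disjointness is automatic, which is why condition (i) needs no restriction on $m_1, m_2$. When $i_0$ is even the indices are $n_2 = m_2/2$ and $n_4 = m_1/2$; if $m_1 = m_2 = 4$ both shrink to $2$, whereupon $L_{n_{i_0}-1}(v_{\rho(i_0)}) = L_1(v_{\rho(i_0)})$ and $R_{n_{i_{k+1}}}(v_{\rho(i_{k+1})}) = R_2(v_{\rho(i_{k+1})})$ both contain the single overlap interval $\mathscr O_{i_0+1}$ and the argument genuinely breaks. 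Condition (ii) excludes exactly this configuration, and I would finish by checking that when at least one of $m_1, m_2$ exceeds $4$ the corresponding interval stays clean and the two remain in separate regions, restoring disjointness. I expect this final case analysis, keeping straight the cyclic placement of the $\mathscr O_j$ relative to the regions $[a_j^1, a_j^{n_j})$, to be the main technical obstacle.
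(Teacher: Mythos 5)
Your proposal follows essentially the same route as the paper's proof: Lemma~\ref{l:babySteps} together with Lemma~\ref{l:keyContainers} isolates $j=n_{i_k}-2$ with $\theta(i_k)=i_0+1$, the period-four structure of $(i_k)$ forces $k\equiv 1\pmod 4$ (hence $\ell\equiv 2\pmod 4$ when $k=\ell-1$), and for $k<\ell-1$ the no-matching branch of Lemma~\ref{l:yAtEndOfGiantStep} places $y_{k+1}$ in $L_{n_{i_0}-1}(v_{\rho(i_0)})\cap R_{n_{\rho(i_0+2)}}(v_{\rho(i_0+2)})$, which is empty exactly under hypotheses (i)/(ii). The final disjointness check that you defer, including your correct identification of the breakdown at $m_1=m_2=4$ with $i_0$ even where both intervals contain $\mathscr O_{i_0+1}$, is precisely what the paper dispatches via Definition~\ref{LRpartition} and Table~\ref{table:Yk+1WhenkIs1mod4}.
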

\begin{proof} The result is clear for $k=0$.  Let $0<k\le \ell-1$.
As in the proof of Lemma~\ref{l:babyStepsWhenM3IsEven},  if $i_0\neq i_{k+1}-2$ then the result holds.   Since $m_3$ is odd, the period length of $(i_k)_{k\geq 0}$ is four. Furthermore, $i_0=i_{k+1}-2$ (with our usual modulus 4 convention) exactly when $k+1\equiv 2 \,(\text{mod } 4)$. Therefore if $k\not\equiv 1\,(\text{mod } 4)$, we have $f^j(y_k)\notin\mathscr{O}_{i_0}$, where $0<j<n_{i_k}-1$.

Suppose  $k\equiv 1\,(\text{mod } 4)$, thus $i_{k+1}=i+2$. Moreover, suppose that $f^j(y_k)\in\mathscr{O}_{i_0}\subset L_{n_i}(v_i)$. Again as in the proof of Lemma~\ref{l:babyStepsWhenM3IsEven}, we  must then have $j=n_{i_k}-2$;  it follows from Lemma~\ref{l:babySteps}  that $y_{k+1}\in L_{n_i-1}(v_{\rho(i)})$. 
On the other hand, Lemma~\ref{l:yAtEndOfGiantStep} gives two cases as to where $y_{k+1}$ is to be found. Upon replacing $x$ there by $x_k$, these cases correspond to the end of a giant step. But, by Lemma~\ref{l:classificationOKandOrbit1} exactly the first case implies that $f(x_{k+1})=y_{k+1}$. By the minimality of $\ell$, we must then have $k=\ell-1$.   In particular, the final statement of our  result holds.

 Now with $k\equiv 1\,(\text{mod } 4)$ and $k<\ell-1$, we are reduced to  considering the possibility of $y_{k+1}\in R_{n_{\rho(i_{k+1})}}(v_{\rho(i_{k+1})})=R_{n_{\rho(i+2)}}(v_{\rho(i+2)})$.      We now argue that it is impossible for $y_{k+1}$ to be simultaneously in both $L_{n_i-1}(v_{\rho(i)})$ and $R_{n_{\rho(i+2)}}(v_{\rho(i+2)})$, compare with Table~\ref{table:Yk+1WhenkIs1mod4}.    If $2\mid i$, then by hypothesis at least one of $m_1$ and $m_2$ is not 4, and thus we have that at least one of $n_2$  and $n_4$ is greater than 2.    If $2\nmid i$ then recall that  $m_3$  being odd implies $n_1,n_3>2$.  With this, applying Definition 2.5 then shows that  in each case $L_{n_i-1}(v_{\rho(i)})\cap R_{n_{\rho(i+2)}}(v_{\rho(i+2)})=\emptyset$. That is, $f^j(y_k)\notin \mathscr{O}_{i_0}$ when $0\leq k<\ell-1$.
 \end{proof}

\begin{table}[h!]
   \begin{tabular}{c||c|c}
     \phantom{$2_{2_{2_{2_{\rho}}}}$}    & $L_{n_i-1}(v_{\rho(i)})$ & $R_{n_{\rho(i+2)}}(v_{\rho(i+2)})$ \\     
      \hline      \hline
   \phantom{$\bigg($}     $2\mid i$ & $L_{n_i-1}(v_i)$ & $R_{n_{i+2}}(v_{i+2})$\\
   \phantom{$\big($}    $2\nmid i$ & $L_{n_i-1}(v_{\rho(i)})$ & $R_{n_i}(v_i)$\\   
    \end{tabular}
    \vspace{1em}
    \caption{Candidates for the location of $y_{k+1}$ if $k\equiv 1\,(\text{mod } 4)$ in the proof of Lemma~\ref{l:nAllBigFindMatches}}
  \label{table:Yk+1WhenkIs1mod4}
\end{table} 

\begin{Lem}\label{l:oddM3EvenILowM1M2}
Suppose that $m_3$ is odd  and $m_1=m_2=4$. Suppose that $i=i_0$ is even and that $x=x_0\in M_\ell$. Then for all $0\leq k<\ell-2$ and $0<j<n_{i_k}-1$, we have $f^j(y_k)\notin\mathscr{O}_{i_0}$.

 Moreover, if $\ell-2 \le k \le  \ell-1$ then for $0<j<n_{i_k}-1$ the containment of $f^j(y_k)$ in $\mathscr{O}_{i_0}$ is only possible if $k \equiv 1\,(\text{mod } 4)$ and $j=n_{i_k}-2$.
\end{Lem}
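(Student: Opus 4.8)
The plan is to follow the proof of Lemma~\ref{l:nAllBigFindMatches} closely, diverging only at its concluding disjointness step, which now fails, and repairing the argument by taking one extra giant step.

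First I would record the arithmetic forced by $m_1=m_2=4$: the even indices satisfy $n_i=n_{i+2}=2$ and the odd indices $n_{i+1}=n_{i+3}=m_3>2$; moreover, since $m_3$ is odd, the index sequence is $i_k=i+k\pmod 4$ and $\theta(i_k)=i_k$ whenever $i_k$ is odd. Hence the range $0<j<n_{i_k}-1$ is vacuous unless $i_k$ is odd, i.e.\ unless $k$ is odd. For odd $k$, Lemma~\ref{l:babySteps} gives $f^{j}(y_k)\in R_{n_{i_k}-j}(v_{\rho^{1+j}(i_k)})$, and by Lemma~\ref{l:keyContainers} such an interval can meet $\mathscr O_i$ only if its lower index is $2$ (the lower index $1$ case is $j=n_{i_k}-1$, excluded) and its vertex is $v_{i+1}$. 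This says $j=n_{i_k}-2$ and $\theta(i_k)=i+1$; as $\theta(i_k)=i_k=i+k$ for odd $k$, the latter is just $k\equiv1\pmod4$. This disposes of every $k\not\equiv1\pmod4$.

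For $k\equiv1\pmod4$ I would argue as in Lemma~\ref{l:nAllBigFindMatches}. If $f^{n_{i_k}-2}(y_k)\in\mathscr O_i$ then $y_{k+1}\in f(\mathscr O_i)=L_1(v_i)$, while Lemma~\ref{l:yAtEndOfGiantStep} offers two locations for $y_{k+1}$. The first corresponds to a match at step $k+1$ and, by minimality of $\ell$, forces $k=\ell-1$. The second, available only for $k<\ell-1$, gives $y_{k+1}\in R_2(v_{i+2})$. This is the one place where $m_1=m_2=4$ changes matters: since $\mathscr O_{i+1}$ is simultaneously the rightmost subinterval of $L_1(v_i)$ and the leftmost subinterval of $R_2(v_{i+2})$ (Lemma~\ref{l:keyContainers}), the intersection $L_1(v_i)\cap R_2(v_{i+2})$ equals $\mathscr O_{i+1}$ rather than being empty, so I can only conclude $y_{k+1}\in\mathscr O_{i+1}$.

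The crux, and the step I expect to be hardest, is to show that this second alternative can occur only when $k=\ell-2$. I would take one further giant step. Since $f$ acts on $\mathscr O_{i+1}=L_{m_3}(v_{i+1})$ as $T_{i+1}$, equation \eqref{e:partition} gives $y_{k+2}=f(y_{k+1})\in f(\mathscr O_{i+1})=L_{m_3-1}(v_{i+3})$. Suppose now there were no match at step $k+2$; then $x_{k+2}\in\mathscr O_{i_{k+2}}=\mathscr O_{i+3}$, so giant step $k+1$ falls under the second (no-match) alternative of Lemma~\ref{l:yAtEndOfGiantStep}, which places $y_{k+2}\in R_{m_3}(v_{i+1})$. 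A direct check from Definition~\ref{LRpartition} and the identities \eqref{wpts}, in the same spirit as the disjointness established in Lemma~\ref{l:nAllBigFindMatches}, shows $L_{m_3-1}(v_{i+3})\cap R_{m_3}(v_{i+1})=\emptyset$. This contradiction forces a match at step $k+2$; as $\ell$ is the first match, $\ell=k+2$, i.e.\ $k=\ell-2$. Collecting the two alternatives, an entry $f^{j}(y_k)\in\mathscr O_i$ during giant step $k$ can occur only at $k=\ell-1$ (first alternative) or $k=\ell-2$ (second alternative), and in either case only with $k\equiv1\pmod4$ and $j=n_{i_k}-2$; this yields the vanishing for $k<\ell-2$ and the restriction for $\ell-2\le k\le\ell-1$, proving both assertions. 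The combinatorial identities $L_1(v_i)\cap R_2(v_{i+2})=\mathscr O_{i+1}$ and $L_{m_3-1}(v_{i+3})\cap R_{m_3}(v_{i+1})=\emptyset$ are the heart of the proof and are precisely what distinguish the present case from Lemma~\ref{l:nAllBigFindMatches}.
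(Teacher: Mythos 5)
Your proposal is correct and follows essentially the same route as the paper's proof: reduce to $k\equiv 1\pmod 4$ and $j=n_{i_k}-2$ via the argument of Lemma~\ref{l:nAllBigFindMatches}, identify $L_1(v_i)\cap R_2(v_{i+2})=\mathscr O_{i+1}$, and take one further giant step to force a match at step $k+2$. The only difference is cosmetic: you rule out the no-match alternative at step $k+2$ by the disjointness $L_{m_3-1}(v_{i+3})\cap R_{m_3}(v_{i+1})=\emptyset$, whereas the paper argues the contrapositive directly by placing $y_{k+2}\in L_{n_{i-1}-1}(v_{i-1})\subset R_1(v_{i+2})\setminus R_{n_{i+1}}(v_{i+1})$ and invoking the first case of Lemma~\ref{l:yAtEndOfGiantStep}.
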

\begin{proof}
Firstly, we note that since $m_1=m_2=4$, we have $n_2=n_4=2$. By the proof above, it suffices to check the case where both $k\equiv 1\,(\text{mod } 4)$ and $i_{k+1}=i+2$, where we again  find that  only if $y_{k+1}\in L_{n_i-1}(v_{\rho(i)}) \cap R_{n_{\rho(i+2)}}(v_{\rho(i+2)})$ can the result fail to hold.  By our hypotheses, we have $L_{n_i-1}(v_{\rho(i)})=L_1(v_i)$ and $R_{n_{\rho(i+2)}}(v_{\rho(i+2)})=R_2(v_{i+2})$.  By Definition 2.5, $L_1(v_i)\cap R_2(v_{i+2})=L_{n_{i+1}}(v_{i+1})$.   

Now, if $y_{k+1}\in L_1(v_i)$,  then from Lemma~\ref{l:yAtEndOfGiantStep} we have the following two cases:
\[\begin{cases}
x_{k+1}\in L_1(v_{i+1})\setminus L_2(v_{i+2})&\text{ and } y_{k+1}\in L_1(v_i)\setminus L_{n_{i+1}}(v_{i+1}),\\
x_{k+1}\in  L_2(v_{i+2}) &\text{ and }y_{k+1}\in  L_{n_{i+1}}(v_{i+1}).
\end{cases}
\]
In the first of these cases, matching occurs in that $f(x_{k+1}) = y_{k+1}$ and hence $k = \ell-1$. 
In the second case, we find that 
\[y_{k+2}=f^{n_{i_{k+1}}-1}(y_{k+1})=f^{n_{i+2}-1}(y_{k+1})=f(y_{k+1})\in L_{n_{i-1}-1}(v_{i-1})\subset R_1(v_{i+2})\setminus R_{n_{i+1}}(v_{i+1}).\]
Therefore, $y_{k+2}$ is in the first case in Lemma~\ref{l:yAtEndOfGiantStep}, and Lemma~\ref{l:classificationOKandOrbit1} then gives that  $f(x_{k+2})=y_{k+2}$. That is $k=\ell-2$.
\end{proof}

\begin{Lem}\label{l:giantStepsNotThereOddM3}  Suppose that $m_3$ is odd.    Fix $i= i_0$ and let $x = x_0 \in M_{\ell}$.    Then for all $0\le k <\ell-1$,  $y_k \notin \mathscr O_{i_0}$. 
Moreover,  $y_{\ell-1} \in \mathscr O_{i_0}$ implies all of the following:  $2\nmid i$; $n_{i+1} =2$;  and, $\ell \equiv 2 \pmod 4$.
\end{Lem}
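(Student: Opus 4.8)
The plan is to specialize the template already used for the even-$m_3$ case (Lemmas~\ref{l:giantStepsNotThereEvenM3} and \ref{l:babyStepsWhenM3IsEven}) and for Lemma~\ref{l:nAllBigFindMatches} to the giant step, i.e. the baby-step index $j=0$. First I would dispatch the case $k=0$: since $y_0=T_{i_0-1}(x_0)$ and $\mathscr O_{i_0}$ is disjoint from its image under $T_{i_0-1}$, clearly $y_0\notin\mathscr O_{i_0}$. Next, arguing exactly as in the proof of Lemma~\ref{l:allMatching}, I would record by induction the identity $y_k=T_{i_k-1}(x_k)$ valid for $0\le k\le \ell$, using Lemma~\ref{l:classificationOKandOrbit1}, the relation $\theta(i_{k-1})=i_k-1$, and the fact that $x_k\in\mathscr O_{i_k}$ for $k<\ell$ (Lemma~\ref{l:allMatching}). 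This identity is what lets me apply Lemma~\ref{l:yAtEndOfGiantStep} at each giant step; since no matching occurs before step $\ell$, the second case applies, so $y_k\in R_{n_{i_k}}(v_{\rho(i_k)})$ for $1\le k\le \ell-1$.

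The key reduction is then to note, via Lemma~\ref{l:keyContainers}, that this $R$-interval can meet $\mathscr O_{i_0}$ only if it equals $R_2(v_{i_0+1})$; the alternative $R_1(v_{i_0+2})$ is excluded because every $n_j\ge 2$. Hence $y_k\in\mathscr O_{i_0}$ forces $n_{i_k}=2$ and $\rho(i_k)=i_0+1$. Since $n_{i_k}=2$ makes $i_k$ even (the odd-indexed $n$'s equal $m_3\ge 3$), $\rho$ fixes $i_k$, so $i_k=i_0+1$; this already gives $2\nmid i_0$ and $n_{i_0+1}=2$, and because odd $m_3$ forces $i_k\equiv i_0+k \pmod 4$ (Lemma~\ref{l:periodOfIsubK} together with \eqref{e:nextI}), also $k\equiv 1 \pmod 4$.

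It remains to pin down $k=\ell-1$. Assuming $y_k\in\mathscr O_{i_0}$ with $1\le k\le\ell-1$, I would compute $y_{k+1}$: since $n_{i_k}=2$ we have $y_{k+1}=f(y_k)$, and as $y_k\in\mathscr O_{i_0}=L_{n_{i_0}}(v_{i_0})$ with $n_{i_0}=m_3>2$, the left-interval dynamics (Lemma~\ref{l:babySteps} and \eqref{e:partition}) put $y_{k+1}\in L_{n_{i_0}-1}(v_{\rho(i_0)})$. On the other hand, the second case of Lemma~\ref{l:yAtEndOfGiantStep} would place $y_{k+1}$ in $R_{n_{\rho(i_{k+1})}}(v_{\rho(i_{k+1})})=R_{n_{i_0}}(v_{i_0})$, using $i_{k+1}=i_0+2$ and $\rho(i_0+2)=i_0$. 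These two intervals are disjoint — this is exactly the computation already carried out in Lemma~\ref{l:nAllBigFindMatches} for odd $i$ from $n_{i_0}=m_3>2$ — so $y_{k+1}$ must instead be in the first case, which by Lemma~\ref{l:classificationOKandOrbit1} is equivalent to the matching $f(x_{k+1})=y_{k+1}$. Minimality of $\ell$ then forces $k+1=\ell$. This yields simultaneously the first assertion (no $y_k\in\mathscr O_{i_0}$ for $k<\ell-1$) and, combined with the previous paragraph, the three conditions $2\nmid i$, $n_{i+1}=2$, and $\ell\equiv 2\pmod 4$ whenever $y_{\ell-1}\in\mathscr O_{i_0}$.

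I expect the part demanding the most care to be the index bookkeeping rather than any new geometry: verifying $\theta(i_{k-1})=i_k-1$, $\rho(i_{k+1})=i_0$, and $i_k\equiv i_0+k\pmod 4$, and confirming that the disjointness $L_{n_{i_0}-1}(v_{\rho(i_0)})\cap R_{n_{i_0}}(v_{i_0})=\emptyset$ is precisely the one already established in Lemma~\ref{l:nAllBigFindMatches}, so that no fresh interval computation is needed. The only conceptual point to get right is that, unlike the baby-step lemmas, the giant-step entry cannot occur for even $i_0$, since $n_{i_k}=2$ propagates to $i_k=i_0+1$ even and hence $i_0$ odd.
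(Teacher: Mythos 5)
Your proposal is correct and follows essentially the same route as the paper's proof: dispatch $k=0$ by disjointness of $\mathscr O_{i_0}$ from its $T_{i_0-1}$-image, use Lemma~\ref{l:yAtEndOfGiantStep} and Lemma~\ref{l:keyContainers} to force $n_{\rho(i_k)}=2$ and $\rho(i_k)=i_0+1$ (hence $2\nmid i_0$, $n_{i_0+1}=2$, $k\equiv 1\pmod 4$), and then show $y_{k+1}\in L_{n_{i_0}-1}(v_{\rho(i_0)})$ forces the first case of Lemma~\ref{l:yAtEndOfGiantStep}, so matching occurs and $k=\ell-1$. The only cosmetic difference is that you argue the disjointness from the second-case region via the Table~\ref{table:Yk+1WhenkIs1mod4} computation, while the paper directly exhibits the containment in the first-case region; these are equivalent given the dichotomy.
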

\begin{proof} Since $\mathscr O_{i_0}$ has empty intersection with its image under $T_{i_0-1}$, we certainly have that $y_0 \notin \mathscr O_{i_0}$.

For $0<k<\ell$,  as in the proof of Lemma~\ref{l:giantStepsNotThereEvenM3},  only if both $n_{\rho(i_k)} = 2$ and $\rho(i_k) = i_0+1$ could $y_k$ possibly lie in $\mathscr O_{i_0}$.   From this, we must have $n_{i_0+1} = 2$.  
Since $m_3$ is odd, both $n_1$ and $n_3$ are greater than two.     Therefore,   $y_k \notin \mathscr O_{i_0}$ when $2\mid i_0$.   
 
Also  as in the proof of Lemma~\ref{l:giantStepsNotThereEvenM3},  $\rho(i_k) = i_0+1$ if and only if  $(i_0, i_k) \in \{(1,2), (2,1), (3,4), (4,3)\}$.   Since $m_3$ is odd,  $(i_k)_{k\ge 0}$  has period length four with period a permutation of $(1,2,3,4)$.    Thus, $y_k \in \mathscr O_{i_0}$ implies  $k\equiv 1 \pmod 4$.  

Now,  $y_k \in \mathscr O_i, 2 \nmid i, n_{i+1} =2$ and  $k\equiv 1 \pmod 4$ imply both $y_k \in R_2(v_{i+1})$ and $n_{i_k} = 2$. Thus,
\[y_{k+1} = f^{n_{i_k}-1}(y_k) = f(y_k) \in R_1(v_{i+1}).\]
Also, if $y_k \in \mathscr O_i$ then $y_k \in L_{n_i}(v_i)$ and $y_{k+1} \in L_{n_i-1}(v_{\rho(i)})\subset R_1(v_{i+1})\setminus R_{n_i}(v_i)$.   That is, $y_{k+1}$ is in the first case of Lemma~\ref{l:yAtEndOfGiantStep} and hence matching occurs.   Therefore,  $k+1 = \ell$.
\end{proof}

By arguing as at the end of the proof of Lemma~\ref{l:freqMeetsWhenM3IsEven}, the following shows that Theorem~\ref{t:infiniteCollisions} holds when $m_3$ is odd. 
\begin{Lem}\label{l:mOddMatching}  Suppose that $m_3$ is odd.    Fix $i= i_0$ and let $x = x_0 \in M_{\ell}\cap \mathscr D   \subset \mathscr O_{i_0}$.  
Then 
\[ f^{r_{\ell}+1}(x) = \begin{cases} f_{\alpha}^{r_{\ell}+1}(x)&\text{or}\\
\\
                                                       f\circ f_{\alpha}^{r_{\ell}}(x)&\text{or}\\
                                                       \\
                                                       f^2\circ f_{\alpha}^{r_{\ell}-1}(x).
                              \end{cases}                         
\]            
  Furthermore,  in the last two cases, the corresponding $f_{\alpha}^{r_{\ell}+a}(x)   \in \mathscr D$.
\end{Lem}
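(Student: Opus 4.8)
I am asked to prove Lemma~\ref{l:mOddMatching}: for $m_3$ odd, $i=i_0$, and $x=x_0\in M_\ell\cap\mathscr D\subset\mathscr O_{i_0}$, the value $f^{r_\ell+1}(x)$ coincides with one of $f_\alpha^{r_\ell+1}(x)$, $f\circ f_\alpha^{r_\ell}(x)$, or $f^2\circ f_\alpha^{r_\ell-1}(x)$; and in the latter two cases the relevant $f_\alpha$-iterate lands back in $\mathscr D$.

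**The approach.** The key idea is to compare the $f$-orbit of $x$ with the $f_\alpha$-orbit of $x=x_0$, noting that $f$ and $f_\alpha$ agree except on $\mathscr D$, and that the first place they can diverge is at the initial step, where $f_\alpha(x)=T_{i-1}(x)=y_0$ while $f(x)=T_i(x)=x_1$ (using that $x\in\mathscr D\subset\mathscr O_i$). So $f_\alpha$ immediately follows the $y$-sequence while $f$ follows the $x$-sequence. First I would recall from Lemma~\ref{l:allMatching} and the definition of $M_\ell$ that $x\in M_\ell$ means matching first occurs at the $\ell$-th giant step, i.e.\ $f^{r_\ell+1}(x)=f^{r_\ell}(y_0)$, and that for $k<\ell$ the orbit segments proceed in lockstep through giant steps $x_k\mapsto x_{k+1}$ and $y_k\mapsto y_{k+1}$. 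Because $f_\alpha$ begins by sending $x$ to $y_0$, the natural expectation is $f_\alpha^{r_\ell}(x)=f^{r_\ell}(y_0)=f^{r_\ell+1}(x)$, giving case one. The content of the lemma is that this clean matching is spoiled precisely when the $f_\alpha$-orbit re-enters $\mathscr O_{i_0}$ (hence possibly $\mathscr D$) at an intermediate baby step, forcing a one- or two-step correction.

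**The key steps in order.** First I would invoke Lemma~\ref{l:giantStepsNotThereOddM3} to control the giant-step points: for $0\le k<\ell-1$, $y_k\notin\mathscr O_{i_0}$, and $y_{\ell-1}\in\mathscr O_{i_0}$ forces $2\nmid i$, $n_{i+1}=2$, and $\ell\equiv 2\pmod 4$. Second, I would invoke Lemmas~\ref{l:nAllBigFindMatches} and~\ref{l:oddM3EvenILowM1M2} to control the intermediate baby-step points $f^j(y_k)$ for $0<j<n_{i_k}-1$: these lie outside $\mathscr O_{i_0}$ except in the narrow exceptional situations isolated there (requiring $k\equiv 1\pmod 4$, $j=n_{i_k}-2$, and $k\in\{\ell-1\}$ or $k\in\{\ell-2,\ell-1\}$). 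Third, I would split into cases according to whether the $y$-orbit avoids $\mathscr D$ entirely up to time $r_\ell$. If it does, then $f_\alpha$ agrees with $f$ on $y_0,\dots,f^{r_\ell-1}(y_0)$, so $f_\alpha^{r_\ell}(x)=f^{r_\ell}(y_0)=f^{r_\ell+1}(x)$, yielding case (1). If instead the $y$-orbit does enter $\mathscr D$, the lemmas above pin this to the last one or two giant steps and a unique baby-step offset; I would trace how $f_\alpha$ diverges from $f$ at that single re-entry, absorbing the discrepancy into one extra application of $f$ (case (2), divergence at the final giant step, from $y_{\ell-1}\in\mathscr O_{i_0}$) or two extra applications (case (3), divergence one giant step earlier). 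In each divergent case, the hypothesis $\alpha\in\mathscr O_i$ together with the exceptional configuration shows the re-entry point lies in $\mathscr D$ specifically, giving the final ``furthermore'' clause $f_\alpha^{r_\ell+a}(x)\in\mathscr D$.

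**The main obstacle.** The delicate part will be the bookkeeping in the divergent cases: precisely identifying, given that the $y$-orbit re-enters $\mathscr O_{i_0}$ at baby step $j=n_{i_k}-2$ of giant step $k\in\{\ell-2,\ell-1\}$, how many extra $f$-steps are needed to re-synchronize with $f^{r_\ell+1}(x)$, and verifying that the re-entry lands in $\mathscr D$ rather than merely in $\mathscr O_{i_0}\setminus\mathscr D$. This requires carefully tracking the orientation-preserving action of the relevant side-pairings on subintervals (via \eqref{wpts} and Lemma~\ref{l:babySteps}) so that the value $\alpha$ defining $\mathscr D=[a_i^{n_i},\alpha)$ is respected, and confirming that the residue $\ell\pmod 4$ correctly distinguishes the one-step correction (case (2)) from the two-step correction (case (3)). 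I expect this case analysis, rather than any single inequality, to be where the real work lies, but all the needed structural facts are already supplied by the three preceding lemmas.
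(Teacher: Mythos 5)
Your proposal follows essentially the same route as the paper: it starts from the matching identity $f^{r_{\ell}+1}(x)=f^{r_{\ell}}(y_0)$, uses Lemma~\ref{l:giantStepsNotThereOddM3} to control the giant-step points and Lemmas~\ref{l:nAllBigFindMatches} and~\ref{l:oddM3EvenILowM1M2} to pin any early entry of the $y$-orbit into $\mathscr D$ to the final one or two giant steps at baby step $j=n_{i_k}-2$, and then reads off which of the three cases occurs from whether and where that entry happens (the paper organizes the same dichotomy by the parity of $i$ and the subcase $m_1=m_2=4$). The only blemish is an index slip in your clean case: when the $f$-orbit of $y_0$ avoids $\mathscr D$ one has $f_{\alpha}^{r_{\ell}}(x)=f^{r_{\ell}-1}(y_0)$, so the correct identity is $f_{\alpha}^{r_{\ell}+1}(x)=f^{r_{\ell}}(y_0)=f^{r_{\ell}+1}(x)$, which still yields case one as you claim.
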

\begin{proof} Since $x \in M_{\ell}$ we have 
\begin{equation}\label{e:theMatch}
f^{r_{\ell}+1}(x) = f^{r_{\ell}}(y).
\end{equation}

 We now consider various cases.  Recall from Subsection~\ref{ss:matching} that we informally refer to each  $f^j(x_k)\mapsto f^{j+1}(x_k)$ with $j \le n_{i_k}-1$ as a baby step.

\smallskip
\noindent
{\bf Case of odd $i$.} \, By Lemma~\ref{l:nAllBigFindMatches},  $f^j(y_k)\notin\mathscr{O}_{i_0}$ for all 
$0\leq k<\ell-1$ and $0<j<n_{i_k}-1$.    If $n_{i+1} \neq 2$ then Lemma~\ref{l:giantStepsNotThereOddM3} implies that 
$y_k\notin\mathscr{O}_{i_0}$ for all  $0\leq k<\ell$. Arguing as in the proof of Lemma~\ref{l:nAllBigFindMatches}, the only remaining possibility for an `early' entrance to $\mathscr D$  is at  the final  baby  step before matching.  That is,    $f^s(y)  \notin\mathscr{D}$ for all  
$0\le s < r_{\ell-1}+n_{r_{\ell-1}}-2= r_{\ell}-1$.   Hence,  $f^{r_{\ell}-1}(y)= f_{\alpha}^{r_{\ell}-1}(y)= f_{\alpha}^{r_{\ell}}(x)$. 
Therefore,  \eqref{e:theMatch} gives  
\begin{equation}\label{e:shorterMatch}
 f^{r_{\ell}+1}(x) =  f\circ f_{\alpha}^{r_{\ell}}(x).
 \end{equation} 
Now, if $f^{r_{\ell}-1}(y) \notin \mathscr D$, then  $f$ and $f_{\alpha}$ agree on this value, and  one finds $f^{r_{\ell}+1}(x) = f_{\alpha}^{r_{\ell}+1}(x)$.     

In the subcase of $n_{i+1} = 2$,   the only possible early entry into $\mathscr D$ is of $y_{\ell-1}$ if also $\ell \equiv 2 \mod 4$. 
Arguing as above, we find  
\[f^{r_{\ell-1}}(y)= f_{\alpha}^{r_{\ell-1}}(y)= f_{\alpha}^{r_{\ell-1}+1}(x).\]   Since $r_{k+1} = r_{k} + n_{i_k}-1$,  here we find $f^{r_{\ell}}(y) = f^{r_{\ell-1}+1}(y) = f\circ f_{\alpha}^{r_{\ell-1}+1}(x) = f\circ f_{\alpha}^{r_{\ell}}(x)$.   
Of course, if there is no early entry in this subcase, then we argue as above.   Note, throughout this case, \eqref{e:shorterMatch} always holds.
 
\smallskip
\noindent
{\bf Case of Even $i$.} \, Other than the subcase of $m_1=m_2=4$ one again finds the same two possibilities for  $f^{r_{\ell}+1}(x)$.  Suppose now that $m_1=m_2=4$ and thus $n_2=n_4 = 2$.  Lemmas~\ref{l:oddM3EvenILowM1M2}  and ~\ref{l:giantStepsNotThereOddM3} combine to show that entry of the $f$-orbit of $y$ into $\mathscr D$ before matching can occur at most once; it is possible only for  $f^{n_{i_k}-2}(y_k)$ with $k \in \{\ell-2, \ell-1\}$ and $k\equiv 1\,(\text{mod } 4)$.

The case of $k=\ell-1$ is argued as above, except that here only the form as given in \eqref{e:shorterMatch} is possible.  Consider now the case of  $k=\ell-2$,  with of course $\ell>2$.  Here,  $ f^{r_{\ell-1}-1}(y)=  f_{\alpha}^{r_{\ell-1}-1}(y)$ holds.   Note that also  $n_i = n_{i_{\ell-1}} = 2$,  and hence $r_{\ell} = r_{\ell-1}+n_{i_{\ell-1}}-1$ gives 
\begin{equation}\label{e:matchWithTwoInExceptCase}
f^{r_{\ell}}(y)= f^{n_{i_{\ell-1}}}\circ f^{r_{\ell-1}-1}(y)= f^2\circ f^{r_{\ell}-2}(y) = f^2\circ f_{\alpha}^{r_{\ell}-2}(y).
\end{equation}   Again,  \eqref{e:theMatch} applies to give the result. 
\end{proof}

We now show  that Theorem~\ref{t:infiniteCollisions}   holds in this case of odd $m_3$. 
 \begin{Lem}\label{l:freqMeetsWhenM3IsEven}   Suppose that $m_3$ is odd and  that $x\in \mathbb S^1$ has infinite $f_{\alpha}$-orbit.  Then there are infinitely many values of $j$ such that the $f$-orbit of $x$ contains at least one of 
 $f^{j}_{\alpha}(x), f\circ f^{j}_{\alpha}(x)$, $f^2\circ f^{j}_{\alpha}(x)$. 
 \end{Lem}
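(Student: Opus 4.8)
The plan is to mirror the even-$m_3$ argument, now feeding in Lemma~\ref{l:mOddMatching} in place of the two location lemmas used there. First I would dispose of the trivial case: if the $f_{\alpha}$-orbit of $x$ never meets $\mathscr D$, then $f_{\alpha}$ and $f$ agree at every iterate, so the two orbits coincide and the first condition of Theorem~\ref{t:infiniteCollisions} holds for every $j$. Thus I may assume the orbit enters $\mathscr D$ at least once, and---by temporarily renaming a suitable iterate as $x$ as necessary---that $x \in M_{\ell}\cap \mathscr D \subset \mathscr O_{i_0}$ for some $\ell$.

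Next I would extract the matching directly from Lemma~\ref{l:mOddMatching}. That lemma asserts that the $f$-orbit point $f^{r_{\ell}+1}(x)$ equals one of $f_{\alpha}^{r_{\ell}+1}(x)$, $f\circ f_{\alpha}^{r_{\ell}}(x)$, or $f^2\circ f_{\alpha}^{r_{\ell}-1}(x)$. Reading off the exponent in each case---taking $j=r_{\ell}+1$, $j=r_{\ell}$, or $j=r_{\ell}-1$ respectively---shows that the $f$-orbit of $x$ contains exactly one of the three required expressions $f_{\alpha}^{j}(x)$, $f\circ f_{\alpha}^{j}(x)$, $f^2\circ f_{\alpha}^{j}(x)$. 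Hence each entry of the $f_{\alpha}$-orbit into $\mathscr D$ produces a valid index $j$, and since successive entries occur at strictly increasing times these indices are distinct.

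It remains to guarantee \emph{infinitely many} such $j$, which I would handle by the same dichotomy as in the even case. If the $f_{\alpha}$-orbit of $x$ meets $\mathscr D$ infinitely often, then the previous paragraph supplies infinitely many admissible indices $j$ and we are done. If instead the orbit meets $\mathscr D$ only finitely often, then beyond the final entry $f_{\alpha}$ agrees with $f$ along the orbit (the two maps differ only on $\mathscr D$), so the forward $f_{\alpha}$-orbit coincides with the forward $f$-orbit from that point on; every sufficiently large $j$ then satisfies the first condition.

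The main point to watch is the bookkeeping of entries into $\mathscr D$ in the shifted cases (2) and (3): there the match is displaced by one or two applications of $f$, which is precisely the content of the ``furthermore'' clause of Lemma~\ref{l:mOddMatching} asserting that the relevant $f_{\alpha}$-iterate lands back in $\mathscr D$. I would invoke this to confirm that a shifted match is itself accompanied by an entry into $\mathscr D$, so that in the ``finitely many entries'' branch the tail is genuinely forced to consist of pure case-(1) matchings and the dichotomy closes cleanly.
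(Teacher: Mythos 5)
Your proposal is correct and follows essentially the same route as the paper: dispose of the case where the orbit never meets $\mathscr D$, rename so that $x\in M_{\ell}\cap\mathscr D$, invoke Lemma~\ref{l:mOddMatching} at each entry into $\mathscr D$, and in the finitely-many-entries branch use the ``furthermore'' clause to force the pure case-(1) matching $f^{r_{\ell}+1}(x)=f_{\alpha}^{r_{\ell}+1}(x)$ so that the tails agree. No substantive differences from the paper's argument.
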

\begin{proof} As mentioned above, if the $f_{\alpha}$-orbit of $x$ never enters $\mathscr D$, then this orbit and the $f$-orbit agree.  Thus in this case, the result clearly holds.   Otherwise,  by temporarily renaming some element of this $f_{\alpha}$-orbit  as $x$ as necessary, we may  suppose that there is some $\ell$ such that $x \in M_{\ell} \cap \mathscr D$.   By Lemma~\ref{l:mOddMatching}, we have that  $f^{r_{\ell}+1}(x) \in \{f_{\alpha}^{r_{\ell}+1}(x),   f\circ f_{\alpha}^{r_{\ell}}(x), f^2\circ f_{\alpha}^{r_{\ell}-1}(x)\}$.   Hence, if there are infinitely many entries of the $f_{\alpha}$-orbit into $\mathscr D$ then we are done.

If there  are only finitely many  entries into $\mathscr D$ of the $f_{\alpha}$-orbit of $x$, then  we replace $x$ by its final visit.   Since the last two cases of Lemma~\ref{l:mOddMatching} result in a further visit to $\mathscr D$,  here it can only be the case that $f^{r_{\ell}+1}(x)= f_{\alpha}^{r_{\ell}+1}(x)$.  Since there is no further visit to $\mathscr D$, the infinite tails of these orbits agree.
\end{proof}

\end{document}